\setlist{nosep}
\setlist[enumerate]{label=\emph{\alph*)}}
\newcommand\EquiQuillen{W_{\infty}^{\Delta}}
\newcommand\EnsSimp{\widehat{\Delta}}
\newcommand\DeuxFoncteurStrict[0]{\deux{}foncteur strict}
\newcommand\DeuxFoncteursStricts[0]{\deux{}foncteurs stricts}
\newcommand\DeuxFoncteurLax[0]{\deux{}foncteur lax}
\newcommand\DeuxFoncteursLax[0]{\deux{}foncteurs lax}
\newcommand\DeuxFoncteurCoLax[0]{\deux{}foncteur colax}
\newcommand\DeuxFoncteursCoLax[0]{\deux{}foncteurs colax}
\newcommand\DeuxTransformationLax[0]{transformation}
\newcommand\DeuxTransformationsLax[0]{transformations}
\newcommand\DeuxTransformationCoLax[0]{optransformation}
\newcommand\DeuxTransformationsStrictes[0]{transformations strictes}
\newcommand\TrancheLax[3]{{#1}/\negmedspace/_{\negmedspace {\rm{l}}}^{#2}{#3}}
\newcommand\TrancheCoLax[3]{{#1}/\negmedspace/_{\negmedspace {\rm{c}}}^{#2}{#3}}
\newcommand\OpTrancheCoLax[3]{{#3}{\backslash\mspace{-5mu}\backslash}_{\mspace{-.3mu}{\rm{c}}}^{\mspace{-6.mu}#2}{#1}}
\newcommand\OpTrancheLax[3]{{#3}{\backslash\mspace{-5mu}\backslash}_{\mspace{-.3mu}{\rm{l}}}^{\mspace{-6.mu}#2}{#1}}
\newcommand\DeuxFoncTrancheLax[2]{{#1}/\negmedspace/_{\negmedspace {\rm{l}}}{#2}}
\newcommand\DeuxFoncTrancheCoLax[2]{{#1}/\negmedspace/_{\negmedspace {\rm{c}}}{#2}}
\newcommand\DeuxFoncOpTrancheLax[2]{{#2}{\backslash\mspace{-5mu}\backslash}_{\mspace{-.3mu}{\rm{l}}}^{\mspace{-6.mu}}{#1}}
\newcommand\DeuxFoncOpTrancheCoLax[2]{{#2}{\backslash\mspace{-5mu}\backslash}_{\mspace{-.3mu}{\rm{c}}}^{\mspace{-6.mu}}{#1}}
\newcommand\DeuxFoncTrancheLaxCoq[3]{{#1}/\negmedspace/_{\negmedspace{\rm{l}}}^{#2}{#3}}
\newcommand\Fibre[3]{{#2}^{-1}({#3})}
\newcommand\CompDeuxUn[0]{}
\newcommand\CompDeuxZero[0]{\circ}
\newcommand\DeuxCellStructComp[3]{{#1}_{{#2},{#3}}}
\newcommand\DeuxCellStructId[2]{{#1}_{{#2}}}
\newcommand\Objets[1]{\mathsf{Ob}({#1})}
\newcommand\UnCell[1]{\mathsf{Fl}_{1}({#1})}
\newcommand\TildeLax[1]{\widetilde{#1}}
\newcommand\BarreLax[1]{\overline{#1}}
\newcommand\LaxCanonique[1]{\eta_{{#1}}}
\newcommand\StrictCanonique[1]{\epsilon_{{#1}}}
\newcommand\TransLaxCanonique[0]{\eta}
\newcommand\TransStrictCanonique[0]{\epsilon}
\newcommand\DeuxInt[1]{\int_{#1}}
\newcommand\DeuxIntOp[1]{\int_{#1}^{op}}
\newcommand\DeuxIntCo[1]{\int_{#1}^{co}}
\newcommand\deux{$2$\nobreakdash-}
\newcommand\un{$1$\nobreakdash-}
\newcommand\CatHom[3]{\underline{\Hom}_{#1}(#2, #3)}
\newcommand\Ens{{\mathcal{E} \mspace{-2.mu} \it{ns}}}
\newcommand\Top{{\mathcal{T}\mspace{-2.mu}\it{op}}}
\newcommand\Cat{{\mathcal{C} \mspace{-2.mu} \it{at}}}
\newcommand{\tCat}{\texorpdfstring{\Cat}{Cat}}
\newcommand\DeuxCat{\text{$2$-$\Cat$}}
\newcommand\tDeuxCat{\texorpdfstring{\DeuxCat}{2-Cat}}
\newcommand\lax{\text{\it lax}}
\newcommand\DeuxCatLax{\DeuxCat_{\lax}}
\newcommand\DeuxCatDeuxCat{\underline{\DeuxCat}}
\newcommand\ClasseDeuxLocFond{localisateur fondamental de $\DeuxCat$}
\newcommand\ClasseDeuxLocFondS{localisateurs fondamentaux de $\DeuxCat$}
\newcommand\ClasseDeuxLocFondLax[0]{localisateur fondamental de $\DeuxCatLax$}
\newcommand\ClasseDeuxLocFondLaxS[0]{localisateurs fondamentaux de $\DeuxCatLax$}
\newcommand\ClassesDeuxLocFond{localisateurs fondamentaux de $\DeuxCat$}
\newcommand\ClassesDeuxLocFondLax{localisateurs fondamentaux de $\DeuxCatLax$}
\newcommand\ClasseUnLocFond{localisateur fondamental de $\Cat$}
\newcommand\ClassesUnLocFond{localisateurs fondamentaux de $\Cat$}
\newcommand\UnLocFond[1]{{#1}}
\newcommand\DeuxLocFond[1]{\mathcal{#1}}
\newcommand\DeuxLocFondLax[1]{\mathcal{#1}}
\newcommand\DeuxLocFondLaxInduit[1]{\mathcal{#1}_{\lax}}
\newcommand\UnLocFondMin{\UnLocFond{W}_{\infty}^{1}}
\newcommand\DeuxLocFondMin{\DeuxLocFond{W}_{\infty}^{2}}
\newcommand\DeuxLocFondLaxMin{\DeuxLocFond{W}_{\infty,lax}^{2}}
\newcommand\Localisation[2]{{#2}^{-1}{#1}}
\newcommand\UnNerf{N}
\newcommand\NerfLaxNor{N_{\rm{l,n}}}
\newcommand\NerfLax{N_{\rm{l}}}
\newcommand\NerfCatLaxNor{\underline{N}_{\rm{l,n}}}
\newcommand\NerfHom{N_{2}}
\newcommand\SupUnObjet[1]{sup_{#1}}
\newcommand\SupLaxNorObjet[1]{sup^{\rm{l,n}}_{#1}}
\newcommand\SupLaxObjet[1]{sup^{\rm{l}}_{#1}}
\newcommand\SupCatLaxNorObjet[1]{\underline{sup}^{\rm{l,n}}_{#1}}
\newcommand\SupHom{\underline{sup}}
\newcommand\SupHomObjet[1]{\underline{sup}_{#1}}
\newcommand\op{\text{op}}
\newcommand\co{\text{co}}
\newcommand\coop{\text{coop}}
\newcommand\DeuxCatUnOp[1]{{#1}^{\op}}
\newcommand\DeuxCatDeuxOp[1]{{#1}^{\co}}
\newcommand\DeuxCatToutOp[1]{{#1}^{\coop}}
\newcommand\DeuxFoncUnOp[1]{{#1}^{\op}}
\newcommand\DeuxFoncDeuxOp[1]{{#1}^{\co}}
\newcommand\DeuxFoncToutOp[1]{{#1}^{\coop}}
\newcommand\mathdeuxcat[1]{\mathcal{#1}}
\newcommand\DeuxCatPonct{e}
\newcommand\UnCatPonct{e}
\theoremstyle{plain}
\newtheorem{theo}{Théorème}[section]
\newtheorem{prop}[theo]{Proposition}
\newtheorem{lemme}[theo]{Lemme}
\newtheorem{corollaire}[theo]{Corollaire}
\theoremstyle{remark}
\newtheorem{exemple}[theo]{Exemple}
\newtheorem{rem}[theo]{Remarque}
\theoremstyle{definition}
\newtheorem{df}[theo]{Définition}
\newtheorem{paragr}[theo]{} 
\newtheorem*{nota}{Notations}
\theoremstyle{plain}
\theoremstyle{definition}
\theoremstyle{remark}
\def\xpoint{\futurelet\@let@token\@xpoint}
\def\@xpoint{%
  \ifx\@let@token.\else
    .%
  \fi
  \xspace}
\DeclareMathOperator{\Hom}{\mathsf{Hom}}
\newcommand{\Sd}{Sd\kern 1pt}
\title{Théories homotopiques des 2-catégories}
\author{Jonathan Chiche}
\date{}
\begin{document}
\xyoption{v2}

\maketitle

\begin{abstract}
Ce texte développe les premiers éléments d'une théorie de l'homotopie des
\deux{}catégories analogue à la théorie de l'homotopie des catégories
développée par Grothendieck dans \emph{Pursuing Stacks}. On y définit la
notion de \emph{localisateur fondamental de $\DeuxCat$}, généralisation
\deux{}catégorique de la notion de localisateur fondamental due à Grothendieck, et l'on montre que les théories homotopiques de $\Cat$ et
$\DeuxCat$ sont équivalentes en un sens remarquablement fort: il existe un
isomorphisme compatible à l'opération de localisation entre les classes
ordonnées par inclusion des localisateurs fondamentaux de $\Cat$ et de
$\DeuxCat$. Cela permet notamment d'en déduire une caractérisation purement
\deux{}catégorique de la notion d'équivalence faible homotopique dans
$\DeuxCat$, sans faire appel aux espaces topologiques ou aux ensembles
simpliciaux. 
\end{abstract}

\selectlanguage{english}
\begin{abstract}
This text develops a homotopy theory of \deux{}categories analogous to Grothendieck's homotopy theory of categories developed in \emph{Pursuing Stacks}. We define the notion of \emph{basic localizer of $\DeuxCat$}, \deux{}categorical generalization of Grothendieck's notion of basic localizer, and we show that the homotopy theories of $\Cat$ and $\DeuxCat$ are equivalent in a remarkably strong sense: there is an isomorphism, compatible with localization, between the ordered classes of basic localizers of $\Cat$ and $\DeuxCat$. It follows that weak homotopy equivalences in $\DeuxCat$ can be characterised in an internal way, without mentioning topological spaces or simplicial sets. 
\end{abstract}

\selectlanguage{french}

\section{Introduction}

Ce travail s'inscrit dans une entreprise de généralisation de la théorie de l'homotopie « de Grothendieck » aux catégories supérieures. 

Les objets de base de la théorie de l'homotopie sont, classiquement, les espaces topologiques ou les CW\nobreakdash-complexes. Les ensembles simpliciaux permettent une approche plus combinatoire. L'équivalence des deux points de vue se précise au moyen de la théorie des catégories de modèles de Quillen : il existe une équivalence de Quillen entre la catégorie des espaces topologiques $\Top$ et celle des ensembles simpliciaux $\EnsSimp$, ces deux catégories se trouvant munies des structures de catégories de modèles dégagées par Quillen dans \cite{QuillenHomotopical}.

Dans \emph{Pursuing Stacks} \cite{Poursuite}, Grothendieck développe une théorie de l'homotopie fondée non pas sur la catégorie des espaces topologiques, non plus que sur celle des ensembles simpliciaux, mais sur la catégorie $\Cat$ des petites catégories, « vue avec un œil de géomètre par l'ensemble d'intuitions, étonnamment riche, provenant des topos » \cite{LettreGrothendieckThomason}. Son travail l'amène à dégager la notion de \emph{catégorie test}, petite catégorie dont le topos des préfaisceaux modélise canoniquement les types d'homotopie, notion dont la catégorie des simplexes $\Delta$ constitue le paradigme historique. S'apercevant qu'il n'a, pour étudier la théorie des catégories test, utilisé qu'un petit nombre de propriétés formelles des équivalences faibles homotopiques classiques de $\Cat$ — définies comme étant les foncteurs dont le nerf est une équivalence faible simpliciale —, il définit la notion de \emph{localisateur fondamental} comme étant une classe de morphismes de $\Cat$ vérifiant ces propriétés, dont la plus importante est le Théorème A de Quillen. À tout localisateur fondamental sont associées diverses notions, non seulement celle de catégorie test et ses variantes, mais également, par exemple, celles de foncteur propre et de foncteur lisse, définies par des propriétés de changement de base analogues à celles des théorèmes de changement de base propre ou lisse en géométrie algébrique. La théorie de l'homotopie de Grothendieck généralise une part fondamentale de la théorie « classique » de l'homotopie simpliciale, dont elle propose une approche conceptuelle remarquablement fructueuse. Elle a été dégagée et développée par Grothendieck dans \cite{Poursuite}, présentée de façon plus « bourbachique » par Maltsiniotis dans \cite{THG} et développée plus avant par Cisinski, notamment dans sa thèse \cite{TheseCisinski} puis dans \cite{PMTH}. C'est à Cisinski que l'on doit la démonstration de deux conjectures fondamentales de Grothendieck en ce domaine : la \emph{minimalité} du localisateur fondamental « classique » de $\Cat$, c'est-à-dire de la classe des morphismes de $\Cat$ dont le nerf est une équivalence faible simpliciale, et l'existence, \emph{pour essentiellement tout localisateur fondamental}\footnote{La seule hypothèse, anodine, est de nature ensembliste.}, d'une structure de catégorie de modèles sur $\Cat$ dont le localisateur fondamental considéré constitue précisément la classe des équivalences faibles, généralisant le résultat de Thomason \cite{Thomason}. Ces structures sont obtenues par « transfert » à partir de structures de catégories de modèles sur la catégorie des ensembles simpliciaux. 

De nombreuses structures catégoriques supérieures et simpliciales sont désormais utilisées tant en théorie de l'homotopie proprement dite qu'en géométrie. Nous introduisons dans cet article la notion de \emph{localisateur fondamental de $\DeuxCat$}, classe de \deux{}foncteurs vérifiant des propriétés formelles analogues à celles de l'axiomatique des localisateurs fondamentaux de Grothendieck, que nous appellerons désormais \emph{localisateurs fondamentaux de $\Cat$}. À tout localisateur fondamental de $\DeuxCat$ devraient se trouver attachées, entre autres, des notions de \deux{}catégorie test, \deux{}foncteur propre et \deux{}foncteur lisse, qui n'ont pas encore été suffisamment étudiées pour que nous les présentions ici. Plus généralement, bien sûr, on peut espérer développer à terme une théorie de l'homotopie « à la Grothendieck » des $n$\nobreakdash-catégories pour $n$ quelconque. Signalons qu'en dépit des obstacles conceptuels, une avancée dans cette direction a été réalisée récemment par Ara et Maltsiniotis, qui dégagent dans \cite{AraMaltsiniotis} un petit nombre de conditions à vérifier pour établir l'existence d'une structure de catégorie de modèles « à la Thomason » sur la catégorie des $n$\nobreakdash-catégories et $n$\nobreakdash-foncteurs stricts. Ils démontrent ces conditions dans le cas $n = 2$, et donc l'existence d'une structure de catégorie de modèles sur $\DeuxCat$ dont les équivalences faibles sont les \DeuxFoncteursStricts{} dont le nerf est une équivalence faible simpliciale. Dans le cas $n = 1$, ils retrouvent le résultat de Thomason qu'avait généralisé Cisinski. 

À partir de la forme absolue de la généralisation du Théorème A de Quillen aux \DeuxFoncteursStricts{} démontrée par Bullejos et Cegarra \cite{BC}, on en obtient facilement une version relative, ce qui permet de dégager une notion de localisateur fondamental de $\DeuxCat$, point de départ d'une théorie de l'homotopie des \deux{}catégories\footnote{Signalons une fois pour toutes que, par « \deux{}catégorie », nous entendons « \deux{}catégorie stricte ».} généralisant les notions et résultats de Grothendieck et Cisinski. Nous consacrons la plus grande partie du présent article à l'étude des relations entre localisateurs fondamentaux de $\Cat$ et de $\DeuxCat$. On explicite notamment un isomorphisme remarquable entre la classe ordonnée par inclusion des localisateurs fondamentaux de $\Cat$ et la classe ordonnée par inclusion des localisateurs fondamentaux de $\DeuxCat$, cet isomorphisme ayant de plus la propriété d'induire des équivalences de catégories entre les catégories localisées de~$\Cat$ et de $\DeuxCat$. Pour établir ce résultat, on définit également la notion de \emph{localisateur fondamental de $\DeuxCatLax$} (la catégorie dont les objets sont les petites \deux{}catégories et dont les morphismes sont les \DeuxFoncteursLax{}), les propriétés homotopiques d'un adjoint à gauche de l'inclusion canonique $\DeuxCat \hookrightarrow \DeuxCatLax$ construit par Bénabou permettant de passer de $\Cat$ à $\DeuxCat$ et réciproquement par l'intermédiaire de $\DeuxCatLax$. Cela confirme l'importance des morphismes non stricts en théorie de l'homotopie des catégories supérieures, importance visible également dans \cite{AraMaltsiniotis} et que l'on peut déjà percevoir dans \cite{BC}, \cite{Cegarra} ou \cite{WHPT} (bien que la démonstration de l'énoncé principal de ce dernier article soit fausse).

Dans \cite{LFM}, Cisinski démontre notamment la conjecture de Grothendieck affirmant que l'intersection de tous les localisateurs fondamentaux de $\Cat$, le \emph{localisateur fondamental minimal de~$\Cat$}, n'est autre que la classe des foncteurs entre petites catégories dont le nerf est une équivalence faible simpliciale. Nos résultats permettent d'en déduire un analogue \deux{}dimensionnel : l'intersection de tous les localisateurs fondamentaux de $\DeuxCat$, le \emph{localisateur fondamental minimal de $\DeuxCat$} n'est autre que la classe des \DeuxFoncteursStricts{} dont le nerf est une équivalence faible simpliciale. En particulier, de même que le résultat de Cisinski fournit une caractérisation purement catégorique des équivalences faibles de $\Cat$, sans faire appel aux espaces topologiques ou aux ensembles simpliciaux, les nôtres fournissent une caractérisation purement \deux{}catégorique des \deux{}foncteurs dont le nerf est une équivalence faible simpliciale. Pour un aperçu de la profondeur de cette propriété de minimalité, on pourra se reporter à \cite{CisinskiKan}. 

Le présent texte fait suite à l'article \cite{ArticleThAMoi}, dont les résultats permettent notamment d'affirmer que les \DeuxFoncteursStricts{} dont le nerf est une équivalence faible simpliciale forment un localisateur fondamental de $\DeuxCat$. Même si l'objectif principal de \cite{ArticleThAMoi} consistait à démontrer la version \deux{}catégorique la plus générale possible du Théorème A de Quillen, et que la notion de localisateur fondamental n'y est mentionnée que dans l'introduction, nous conseillons au lecteur de parcourir~\cite{ArticleThAMoi}, dont nous reprenons ici certains éléments. Indépendamment de cela, le présent texte adopte un point de vue qu'il sera probablement plus facile d'appréhender après avoir parcouru, sinon lu, non seulement \cite{ArticleThAMoi}, mais aussi \cite{LFM} et \cite{THG}, en attendant la publication de \cite{Poursuite}. De plus, le présent article généralise les résultats de \cite{ArticleThAMoi} au cas de localisateurs fondamentaux arbitraires de $\DeuxCat$. 

Après cette introduction, nous rappelons dans la deuxième section des résultats de Quillen, Grothendieck et Cisinski, concernant tous la dimension $1$. 

Dans la troisième section, nous rappelons certaines des notions \deux{}catégoriques introduites dans~\cite{ArticleThAMoi}. On les complète par deux constructions duales d'intégration de \deux{}foncteurs et l'on en souligne une propriété homotopique. Nous concluons la section par un exposé de divers foncteurs nerfs, Carrasco, Cegarra et Garz\'on ayant montré dans \cite{CCG} qu'ils sont tous homotopiquement équivalents. 

Dans la quatrième section, nous étudions les premières propriétés des classes de \DeuxFoncteursStricts{} obtenues à partir d'un localisateur fondamental de $\Cat$ par image réciproque du foncteur «~catégorie des éléments du nerf ». On montrera plus loin que tous les localisateurs fondamentaux de $\DeuxCat$ s'obtiennent ainsi. 

La cinquième section pose la définition de la notion de localisateur fondamental de $\DeuxCat$. C'est une classe de \DeuxFoncteursStricts{} vérifiant un petit nombre de propriétés dont la plus importante est une version \deux{}catégorique du Théorème A de Quillen. Nous dégageons les premières conséquences de la définition, notamment l'invariance par dualité, propriété fondamentale déjà non triviale dans le cas de $\Cat$, dont l'on adapte la démonstration dans celui, plus subtil, de $\DeuxCat$.

Dans la sixième section, nous explicitons l'isomorphisme annoncé entre la classe ordonnée par inclusion des localisateurs fondamentaux de $\Cat$ et la classe ordonnée par inclusion des localisateurs fondamentaux de $\DeuxCat$. Il est compatible à l'opération de localisation. Nous en déduisons une caractérisation purement interne à $\DeuxCat$ de la classe des \DeuxFoncteursStricts{} dont le nerf est une équivalence faible simpliciale : c'est le plus petit localisateur fondamental de $\DeuxCat$.

Nous utilisons les résultats obtenus pour en déduire, dans la septième section, un analogue \deux{}catégorique d'une caractérisation, due à Cisinski, des équivalences faibles classiques de $\Cat$ à l'aide du Théorème B de Quillen. 

Dans un article \cite{Ara} faisant suite au présent texte, Dimitri Ara explique comment nos résultats permettent de déduire de ceux obtenus par lui-même et Maltsiniotis dans \cite{AraMaltsiniotis} et de la théorie développée par Cisinski dans~\cite{PMTH} qu'à tout localisateur fondamental (satisfaisant une condition ensembliste anodine) $\DeuxLocFond{W}$ de $\DeuxCat$ est associée une structure de catégorie de modèles de Quillen sur $\DeuxCat$ dont la classe des équivalences faibles est exactement $\DeuxLocFond{W}$. Il montre que les structures de catégorie de modèles « à la Thomason » ainsi obtenues modélisent exactement les localisations de Bousfield à gauche combinatoires de la structure de catégorie de modèles classique sur les ensembles simpliciaux. Nous renvoyons à \cite{Ara} pour plus de détails.


\begin{nota}
Nous notons $\Cat$ la catégorie des petites catégories et $CAT$ la catégorie des catégories (pas forcément petites). Pour toute petite catégorie $A$, $\widehat{A}$ désigne la catégorie des préfaisceaux d'ensembles sur $A$. On notera $\Delta$ la catégorie des simplexes, et donc $\EnsSimp$ la catégorie des ensembles simpliciaux. On note $[m]$ la catégorie associée à l'ensemble ordonné naturellement $\{ 0, \dots, m \}$ pour un entier $m \geq 0$ et $\Delta_{m}$ l'image de cette catégorie, objet de $\Delta$, par le plongement de Yoneda $\Delta \hookrightarrow \EnsSimp$. On notera $\UnNerf : \Cat \to \EnsSimp$ le foncteur nerf « classique ». Pour toute petite catégorie $A$, $\Objets{A}$ (resp. $\UnCell{A}$) désigne les objets (resp. les morphismes) de $A$. On notera $\UnCatPonct$ la catégorie ponctuelle, n'ayant qu'un seul objet et qu'un seul morphisme (l'identité de l'unique objet). On la confondra, dans les notations, avec la \deux{}catégorie ponctuelle, n'ayant qu'un seul objet, qu'une seule \un{}cellule et qu'une seule \deux{}cellule.
\end{nota}

\section{Localisateurs fondamentaux de $\tCat$}

\begin{df}\label{DefWTop}
Une application continue $f : X \to Y$ entre espaces topologiques est une \emph{équivalence faible topologique}, ou plus simplement une \emph{équivalence faible}, si elle induit une bi\-jection au niveau des $\pi_{0}$ et des isomorphismes entre les groupes d'homotopie pour tout choix de point base. Plus précisément, $f$ est une équivalence faible si
$$
\pi_{0}(f) : \pi_{0}(X) \to \pi_{0}(Y)
$$
est une bijection et, pour tout point $x$ de $X$ et tout entier $n \geq 1$,
$$
\pi_{n}(f,x) : \pi_{n}(X,x) \to \pi_{n}(Y,f(x))
$$
est un isomorphisme de groupes.
\end{df}

\begin{df}\label{DefWEnsSimp}
Un morphisme d'ensembles simpliciaux est une \emph{équivalence faible simpliciale}, ou plus simplement une \emph{équivalence faible}, si son image par le foncteur de réalisation géométrique $\vert \bullet \vert : \EnsSimp \to Top$ est une équivalence faible topologique. On notera $\EquiQuillen$ la classe des équivalences faibles simpliciales. 
\end{df}


%

\begin{df}
Un foncteur entre petites catégories est une \emph{équivalence faible catégorique}, ou plus simplement une \emph{équivalence faible}, si son image par le foncteur nerf $\UnNerf : \Cat \to \EnsSimp$ est une équivalence faible simpliciale. On note $\UnLocFondMin$ la classe des équivalences faibles de $\Cat$. 
\end{df}


%

\begin{paragr}
Pour tout préfaisceau d'ensembles $X$ sur une petite catégorie $A$, on note $A/X$ la \emph{catégorie des éléments} de $X$. Ses objets sont les couples $(a, x)$, avec $a$ un objet de $A$ et $x$ un objet de $X(a)$. En vertu du lemme de Yoneda, l'on peut donc considérer $x$ comme un morphisme $a \to X$ de préfaisceaux sur $A$. Les morphismes de $(a,x)$ vers $(a',x')$ sont les morphismes $f : a \to a'$ de $A$ tels que $x' f = x$. Cela permet de définir un foncteur
$$
\begin{aligned}
i_{A} : \widehat{A} &\to \Cat
\\
X &\mapsto A / X
\end{aligned}
$$ 
pour toute petite catégorie $A$.
\end{paragr}

Le théorème \ref{EquiEnsSimpCat} est attribué à Quillen par Illusie dans \cite{Illusie}. On pourra consulter \cite[volume 2, chapitre 6, section 3]{Illusie}, et plus précisément \cite[volume 2, chapitre 6, section 3, corollaire 3.3.1]{Illusie}.


\begin{theo}[Quillen]\label{EquiEnsSimpCat}
On a l'égalité 
$$
\EquiQuillen = i_{\Delta}^{-1}\UnLocFondMin.
$$
De plus, les foncteurs nerf $\UnNerf : \Cat \to \EnsSimp$ et catégorie des éléments $i_{\Delta} : \EnsSimp \to \Cat$ induisent des équivalences de catégories quasi-inverses l'une de l'autre
$$
\overline{\UnNerf} : \Localisation{\Cat}{\UnLocFondMin} \to \Localisation{\EnsSimp}{\EquiQuillen}
$$
et
$$
\overline{i_{\Delta}} : \Localisation{\EnsSimp}{\EquiQuillen} \to \Localisation{\Cat}{\UnLocFondMin}.
$$
\end{theo}

\begin{df}\label{DefSaturationFaible}
Soit $C$ une petite catégorie. Une classe $S \subset \UnCell{C}$ est dite \emph{faiblement saturée} si elle vérifie les conditions suivantes.
\begin{itemize}
\item[FS1] Les identités des objets de $C$ sont dans $S$.
\item[FS2] Si deux des trois flèches d'un triangle commutatif sont dans $S$, alors la troisième l'est aussi.
\item[FS3] Si $i : X \to Y$ et $r : Y \to X$ sont des morphismes de $C$ vérifiant $ri = 1_{X}$ et si $ir$ est dans $S$, alors il en est de même de $r$ (et donc aussi de $i$ en vertu de ce qui précède). 
\end{itemize}
\end{df}

\begin{rem}
On appellera souvent la condition FS2 propriété de « 2 sur 3 ». 
\end{rem}

Pour tout morphisme $u : A \to B$ de $\Cat$ et tout objet $b$ de $B$, nous noterons $A/b$ la catégorie dont les objets sont les couples $(a, p : u(a) \to b)$, avec $a$ un objet de $A$ et $p$ un morphisme de $B$, et dont les morphismes de $(a,p)$ vers $(a',p')$ sont les morphismes $f : a \to a'$ de $A$ tels que $p' u(f) = p$. 

Pour tout diagramme commutatif
$$
\xymatrix{
A 
\ar[rr]^{u}
\ar[dr]_{w}
&&
B
\ar[dl]^{v}
\\
&
C
}
$$
dans $\Cat$ et tout objet $c$ de $C$, on notera $u/c$ le foncteur défini par
$$
\begin{aligned}
A/c &\to B/c
\\
(a, p) &\mapsto (u(a), p)
\\
f &\mapsto u(f).
\end{aligned}
$$

\begin{df}[Grothendieck]\label{DefUnLocFond}
Un \emph{\ClasseUnLocFond{}} est une classe $\UnLocFond{W}$ de morphismes de $\Cat$ vérifiant les conditions suivantes.
\begin{itemize}
\item[LA] La partie $\UnLocFond{W}$ de $\UnCell{\Cat}$ est faiblement saturée.
\item[LB] Si $A$ est une petite catégorie admettant un objet final, alors le morphisme canonique $A \to \UnCatPonct$ est dans $\UnLocFond{W} $.
\item[LC] Si
$$
\xymatrix{
A 
\ar[rr]^{u}
\ar[dr]_{w}
&&
B
\ar[dl]^{v}
\\
&
C
}
$$
désigne un triangle commutatif dans $\Cat$ et si, pour tout objet $c$ de $C$, le foncteur $u/c$ est dans $\UnLocFond{W}$, alors $u$ est dans $\UnLocFond{W}$. 
\end{itemize}
\end{df}

\begin{exemple}\label{UnLocFondMinUnLocFond}
La classe des équivalences faibles topologiques est faiblement saturée. Par fonctorialité, $\UnLocFondMin$ est donc faiblement saturée. En vertu de \cite[p. 84, corollaire 2]{QuillenK}, elle vérifie la condition LB. La condition LC n'est rien d'autre, dans ce cas, que la forme relative du Théorème A de Quillen \cite[p. 93, Théorème A]{QuillenK}. La classe $\UnLocFondMin$ est donc un \ClasseUnLocFond{}. C'en est même le paradigme justifiant historiquement l'introduction de cette notion. Pour plus de détails, on pourra se reporter à \cite{Poursuite} ou à l'introduction de \cite{THG}. 
\end{exemple}

\begin{paragr}
La notion de localisateur fondamental de $\Cat$ est stable par intersection. On définit le \emph{localisateur fondamental minimal de $\Cat$} comme l'intersection de tous les \ClassesUnLocFond{}. Le théorème \ref{CisinskiGrothendieck} a été conjecturé par Grothendieck. C'est \cite[théorème 2.2.11]{LFM}. 
\end{paragr}

\begin{theo}[Cisinski]\label{CisinskiGrothendieck}
Le localisateur fondamental minimal de $\Cat$ est $\UnLocFondMin$.
\end{theo}

\begin{paragr}
Pour toute petite catégorie $A$, pour tout foncteur $u : A \to \Cat$, on note $\DeuxInt{A}u$ la catégorie (opfibrée sur $A$) dont les objets sont les couples $(a, x)$, avec $a$ un objet de $A$ et $x$ un objet de $u(a)$, et dont les morphismes de $(a,x)$ vers $(a',x')$ sont les couples $(f : a \to a', r : F(f)(x) \to x')$, avec $f$ un morphisme de $A$ et $r$ un morphisme de $F(a')$, les unités et compositions étant définies de façon évidente. On peut étendre de façon naturelle cette construction aux morphismes de foncteurs : pour tous foncteurs $u$ et $v$ de $A$ dans $\Cat$, pour tout morphisme de foncteurs $\sigma : u \Rightarrow v$, on construit un foncteur $\DeuxInt{A} \sigma : \DeuxInt{A}u \to \DeuxInt{A}v$. On notera que le foncteur $i_{A} : \widehat{A} \to \Cat$ n'est rien d'autre qu'une forme duale de cette construction, en considérant la catégorie des ensembles comme une sous-catégorie de $\Cat$. 
\end{paragr}

\begin{paragr}
Soient $I$ et $J$ deux petites catégories et $F(\bullet, \bullet) : I \times J \to Cat$ un foncteur. On peut considérer les foncteurs 
$$
\begin{aligned}
J &\to \Cat
\\
j &\mapsto \DeuxInt{I}F(\bullet, j)
\end{aligned}
$$
et
$$
\begin{aligned}
I &\to \Cat
\\
i &\mapsto \DeuxInt{J}F(i, \bullet).
\end{aligned}
$$
Le lemme \ref{Fubini} est immédiat. 
\end{paragr}

\begin{lemme}\label{Fubini}
Soient $I$ et $J$ deux petites catégories et $F : I \times J \to Cat$ un foncteur. On a des isomorphismes canoniques
$$
\DeuxInt{I \times J} F(\bullet, \bullet) \simeq \DeuxInt{I} \left(i \mapsto \DeuxInt{J} F(i, \bullet)\right) \simeq \DeuxInt{J} \left(j \mapsto \DeuxInt{I} F(\bullet, j)\right).
$$
\end{lemme}

\medbreak

\emph{Jusqu'à la fin de cette section, on suppose fixé un localisateur fondamental $\UnLocFond{W}$ de $\Cat$.} On appellera \emph{$\UnLocFond{W}$\nobreakdash-équivalences faibles}, ou plus simplement \emph{équivalences faibles}, les éléments de $\UnLocFond{W}$. 

\begin{prop}\label{IntegrationWParArguments}
Soient $A$ une petite catégorie, $u$ et $v$ deux foncteurs de $A$ vers $\Cat$ et $\sigma : u \Rightarrow v$ un morphisme de foncteurs. Supposons que, pour tout objet $a$ de $A$, $\sigma_{a} : u(a) \to v(a)$ soit une équivalence faible. Alors $\DeuxInt{A}\sigma : \DeuxInt{A}u \to \DeuxInt{A}v$ est une équivalence faible. 
\end{prop}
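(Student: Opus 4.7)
\begin{paragr}[Plan de preuve proposée]
La stratégie naturelle est d'appliquer la condition LC (version relative du Théorème~A) au triangle commutatif
$$
\xymatrix{
\DeuxInt{A}u
\ar[rr]^{\DeuxInt{A}\sigma}
\ar[dr]
&&
\DeuxInt{A}v
\ar[dl]
\\
&
A
&
}
$$
dont les flèches obliques sont les projections canoniques. Il suffit donc de montrer que, pour tout objet $a$ de $A$, le foncteur induit sur les tranches
$$\bigl(\DeuxInt{A}u\bigr)/a \longrightarrow \bigl(\DeuxInt{A}v\bigr)/a$$
est dans $\UnLocFond{W}$.
\end{paragr}

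\begin{paragr}
Je ramènerais cette question à l'énoncé portant sur les fibres en exhibant une adjonction explicite. Pour tout foncteur $w : A \to \Cat$, on définit
$$j_{w} : w(a) \to \bigl(\DeuxInt{A}w\bigr)/a, \quad y \mapsto \bigl((a,y), 1_{a}\bigr),$$
et
$$r_{w} : \bigl(\DeuxInt{A}w\bigr)/a \to w(a), \quad \bigl((a',x), p : a' \to a\bigr) \mapsto w(p)(x).$$
On vérifie immédiatement que $r_{w}\, j_{w} = 1_{w(a)}$ et qu'il existe une transformation naturelle canonique $1 \Rightarrow j_{w}\, r_{w}$ dont la composante en $((a',x), p)$ est le morphisme $(p, 1_{w(p)(x)})$. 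Ceci définit une adjonction $r_{w} \dashv j_{w}$. Comme toute adjonction entre petites catégories induit une équivalence dans la catégorie localisée par un localisateur fondamental de $\Cat$ (conséquence standard des axiomes LA et LB), on en déduit que $j_{w}$ et $r_{w}$ sont dans $\UnLocFond{W}$.
\end{paragr}

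\begin{paragr}
La naturalité de $\sigma$ entraîne précisément la commutativité stricte du carré
$$
\xymatrix{
\bigl(\DeuxInt{A}u\bigr)/a
\ar[rr]
\ar[d]_{r_{u}}
&&
\bigl(\DeuxInt{A}v\bigr)/a
\ar[d]^{r_{v}}
\\
u(a)
\ar[rr]^{\sigma_{a}}
&&
v(a)
}
$$
car $\sigma_{a} \circ u(p) = v(p) \circ \sigma_{a'}$ pour tout $p : a' \to a$ dans $A$. La flèche inférieure étant dans $\UnLocFond{W}$ par hypothèse et les flèches verticales par l'étape précédente, la condition FS2 (\emph{deux sur trois}) implique que la flèche supérieure est dans $\UnLocFond{W}$. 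L'application de LC à la première étape conclut.
\end{paragr}

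\begin{paragr}
Le point délicat est moins la structure combinatoire de la démonstration que la justification du fait qu'une adjonction entre petites catégories induit des équivalences faibles pour tout localisateur fondamental : elle repose sur le fait qu'une transformation naturelle se factorise à travers une projection de la forme $X \times [1] \to X$, ce qui impose de disposer de la stabilité par produit par $[1]$ ou, de façon équivalente, d'un lemme du type ``\un{}catégorie admettant un adjoint est une $\UnLocFond{W}$\nobreakdash-équivalence'' déduit de LA, LB et LC. Une fois ce lemme standard admis, le reste est une vérification formelle.
\end{paragr}
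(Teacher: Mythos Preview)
Your argument is correct. The paper does not spell out a proof but simply refers to \cite[proposition~2.3.1]{THG}; the reasoning you give---the commutative triangle over $A$, the explicit adjunction $r_{w} \dashv j_{w}$ identifying $(\DeuxInt{A}w)/a$ with the fibre $w(a)$ up to $\UnLocFond{W}$-equivalence, and the conclusion via LC and two-out-of-three---is precisely the standard argument found in that reference.
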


\begin{proof}
C'est, par exemple, \cite[proposition 2.3.1]{THG}.
\end{proof}

\begin{df}\label{DfWA}
On dit qu'un morphisme de préfaisceaux sur une petite catégorie $A$ est une \emph{équivalence faible} (de préfaisceaux) si son image par le foncteur $i_{A}$ est une équivalence faible (de $\Cat$). On notera $W_{\widehat{A}}$ la classe des équivalences faibles de préfaisceaux sur $A$. 
\end{df}

\begin{lemme}\label{Queneau}
Soient $A$ et $B$ deux petites catégories et $f(\bullet, \bullet) : X(\bullet, \bullet) \to Y(\bullet, \bullet)$ un morphisme de ${\widehat{A \times B}}$. Supposons que, pour tout objet $a$ de $A$, le morphisme $f(a,\bullet) : X(a,\bullet) \to Y(a,\bullet)$ soit dans $\UnLocFond{W}_{\widehat{B}}$. Alors, $f(\bullet, \bullet)$ est dans $\UnLocFond{W}_{\widehat{A \times B}}$. 
\end{lemme}

\begin{proof}
Supposons que, pour tout objet $a$ de $A$, le morphisme $f(a, \bullet) : X(a,\bullet) \to Y(a,\bullet)$ soit dans $\UnLocFond{W}_{\widehat{B}}$, c'est-à-dire que le foncteur $i_{B} (f(a, \bullet))$ soit dans $\UnLocFond{W}$. Il s'agit de montrer que $f(\bullet, \bullet)$ est dans $\UnLocFond{W}_{\widehat{A \times B}}$, c'est-à-dire que la flèche $i_{A \times B}(f(\bullet, \bullet))$ est dans $\UnLocFond{W}$. Pour cela, il suffit d'invoquer le lemme \ref{Fubini} (ou plutôt l'énoncé dual) qui permet d'identifier
$
i_{A \times B} (f(\bullet, \bullet)) 
$
à
$
i_{A} (i_{B} (f(a,\bullet)))
$
et de constater que le dernier terme est bien dans $\UnLocFond{W}$ en vertu des hypothèses et de la proposition \ref{IntegrationWParArguments}, l'assignation 
$
a \mapsto i_{B} (f(a,\bullet)) 
$
définissant un morphisme de foncteurs de
$
A \to \Cat
,
a \mapsto i_{B}X(a,\bullet)
$
vers
$
A \to \Cat
,
a \mapsto i_{B}Y(a,\bullet)
$.
\end{proof}

\begin{df}\label{DfCatAspherique}
On dit qu'une petite catégorie $A$ est \emph{$\UnLocFond{W}$\nobreakdash-asphérique}, ou plus simplement \emph{asphérique}, si le foncteur canonique $A \to e$ est une équivalence faible. Un morphisme $u : A \to B$ de $\Cat$ est \emph{\UnLocFond{W}\nobreakdash-asphérique}, ou plus simplement \emph{asphérique} si, pour tout objet $b$ de $B$, la catégorie $A/b$ est asphérique. 
\end{df}

\begin{rem}
Un foncteur asphérique est donc une équivalence faible.
\end{rem}

Tout morphisme $u : A \to B$ de $\Cat$ en induit un autre, que l'on notera $u^{*}$, défini par
$$
\begin{aligned}
u^{*} : \widehat{B} &\to \widehat{A}
\\
X &\mapsto (a \mapsto X(u(a))).
\end{aligned}
$$ 

\begin{prop}\label{CasParticulier1.2.9.THG}
Soient $A$ et $B$ deux petites catégories asphériques et $u : A \to B$ un morphisme de $\Cat$. Les deux conditions suivantes sont équivalentes.
\begin{itemize}
\item[(i)]
Le foncteur $u$ est asphérique.
\item[(ii)] 
Si $\varphi$ est une équivalence faible de $\widehat{B}$, alors $u^{*}(\varphi)$ est une équivalence faible de $\widehat{A}$. 
\end{itemize}
\end{prop}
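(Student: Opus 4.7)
The plan is to prove both implications from the axioms of the basic localizer, using the category-of-elements construction $i_A, i_B$ to translate statements about presheaves into statements about functors between small categories.

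For (i) $\Rightarrow$ (ii), I would first establish the following auxiliary assertion: for every presheaf $X$ on $B$, the canonical functor
$$j_X : i_A(u^*X) \to i_B(X), \quad (a, x) \mapsto (u(a), x),$$
belongs to $\UnLocFond{W}$. The key step is the identification, for every object $(b, y)$ of $i_B(X)$, of the slice category $j_X/(b,y)$ with $A/b$: an object is a triple $(a, x, f)$ with $f : u(a) \to b$ in $B$ and $y \cdot f = x$, so $x$ is redundant, and a routine check matches the morphisms. Since $A/b$ is aspherical by hypothesis and $i_B(X)/(b,y)$ is aspherical by LB (it has a terminal object), FS2 applied to the triangle over $\UnCatPonct$ yields $j_X/(b,y) \to i_B(X)/(b,y)$ in $\UnLocFond{W}$; axiom LC then gives $j_X \in \UnLocFond{W}$. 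Given a weak equivalence $\varphi : X \to Y$ in $\widehat{B}$, the commutative square whose horizontal sides are $j_X, j_Y$ and whose vertical sides are $i_A(u^*\varphi), i_B(\varphi)$ has three sides in $\UnLocFond{W}$, so FS2 forces the fourth to be so, yielding (ii).

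For (ii) $\Rightarrow$ (i), I would apply (ii) to the canonical morphism $h_b \to e_B$ from the representable presheaf $h_b = B(-, b)$ to the terminal presheaf of $\widehat{B}$, for an arbitrary object $b$ of $B$. This morphism is itself a weak equivalence of presheaves: indeed $i_B(h_b) = B/b$ has a terminal object (aspherical by LB), $i_B(e_B) = B$ is aspherical by hypothesis, and FS2 concludes. By (ii), $u^*(h_b \to e_B)$ is a weak equivalence in $\widehat{A}$. Since $u^*h_b$ is the presheaf $a \mapsto B(u(a), b)$, one has $i_A(u^*h_b) = A/b$; and $u^*e_B = e_A$ gives $i_A(u^*e_B) = A$. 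Thus $A/b \to A$ is in $\UnLocFond{W}$, and combined with the asphericity of $A$ via FS2, $A/b$ is aspherical. As this holds for every $b$, $u$ is aspherical.

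The only non-formal step is the identification $j_X/(b,y) \cong A/b$ in the first implication; once this bookkeeping is done, the rest is a systematic application of FS2, LB, and LC, with no real obstacle. I should note that the asphericity hypothesis on $A$ is only needed for (ii) $\Rightarrow$ (i), and that on $B$ only to verify that $h_b \to e_B$ is a weak equivalence; neither hypothesis enters the first direction, which is why it extends to the general proposition of \cite{THG}.
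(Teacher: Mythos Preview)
Your argument is correct. The paper gives no proof of its own here: it simply refers to \cite[proposition~1.2.9]{THG}, of which this statement is a special case. What you have written is essentially the standard proof of that proposition, carried out directly from the axioms LA--LC. The identification $j_X/(b,y) \simeq A/b$ is right (the element $x$ is determined by $y$ and $f$ via $x = X(f)(y)$), and the two applications of FS2 and LC are exactly as you describe. Your closing observation is also accurate: $(i)\Rightarrow(ii)$ uses neither the asphericity of $A$ nor that of $B$, which is why the full \cite[1.2.9]{THG} formulates this direction without those hypotheses.
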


\begin{proof}
C'est une partie de l'énoncé de \cite[proposition 1.2.9]{THG}. 
\end{proof}

\begin{rem}
À toute notion de localisateur fondamental de $\Cat$ se rattachent non seulement celles d'équivalence faible de $\Cat$ (élément du localisateur fondamental) et de petite catégorie asphérique (définition \ref{DfCatAspherique}), mais également, pour toute petite catégorie $A$, d'équivalence faible dans la catégorie $\widehat{A}$ des préfaisceaux sur $A$ (définition \ref{DfWA}) ainsi que de préfaisceau asphérique sur $A$. Il reste à définir cette dernière notion : on dit qu'un préfaisceau $X$ sur $A$ est asphérique si la catégorie $A/X$ est asphérique. Ainsi, $A$ est asphérique si et seulement si le préfaisceau final sur $A$ est asphérique. Voir notamment \cite[1.2.5 et 1.2.6]{THG} pour des détails. On peut alors compléter la proposition \ref{CasParticulier1.2.9.THG} par l'énoncé suivant : sous les mêmes données, $u$ est asphérique si et seulement si, pour tout objet $b$ de $B$, le préfaisceau $u^{*}(b)$ sur $A$ est asphérique (c'est une autre partie de \cite[proposition 1.2.9]{THG}).
\end{rem}

\begin{df}\label{DefTotalementAspherique}
On dit qu'une petite catégorie $A$ est \emph{totalement $\UnLocFond{W}$\nobreakdash-asphérique}, ou plus simplement \emph{totalement asphérique}, si elle est asphérique et si le foncteur diagonal 
$$
\begin{aligned}
\delta_{A} : A &\to A \times A
\\
a &\mapsto (a,a)
\end{aligned}
$$ 
est asphérique. 
\end{df}

\begin{rem}\label{RemarqueTotalementAspherique}
En conservant les mêmes notations, le foncteur $\delta_{A}$ est asphérique si et seulement si, pour tous objets $a$ et $a'$ de $A$, le produit de préfaisceaux représentables $a \times a'$ est un préfaisceau asphérique. De plus, si $A$ est non vide et que le foncteur $\delta_{A}$ est asphérique, alors $A$ est asphérique (voir la démonstration de \cite[proposition 1.6.1]{THG}).
\end{rem}

\begin{prop}\label{DeltaTotalementAspherique}
La catégorie des simplexes $\Delta$ est totalement asphérique. 
\end{prop}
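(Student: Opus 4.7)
The plan is to treat the asphericity of $\Delta$ and the asphericity of the diagonal $\delta_\Delta$ separately, identifying each slice $\delta_\Delta/([m],[n])$ with a category of elements of a product of representables, then importing contractibility from the classical simplicial setting via the Illusie--Quillen equivalence and Cisinski's minimality theorem.

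First, the object $[0]$ is a terminal object of $\Delta$, so axiom LB gives directly that $\Delta$ is $\UnLocFond{W}$-asph\'erique. For the total asphericity, it remains to show that for every pair $([m],[n])$ the tranche $\delta_\Delta/([m],[n])$ is $\UnLocFond{W}$-asph\'erique. By definition an object of this tranche is a triple $([k],\sigma : [k]\to[m],\tau : [k]\to[n])$, which by Yoneda is precisely a $k$-simplex of the product simplicial set $\Delta_m \times \Delta_n$; checking the morphisms gives a canonical isomorphism
\[
\delta_\Delta/([m],[n]) \;\cong\; i_\Delta(\Delta_m \times \Delta_n).
\]

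Next I would verify that $i_\Delta(\Delta_m \times \Delta_n)$ is $\UnLocFond{W}$-asph\'erique. Observe that $\Delta_m \times \Delta_n$ is canonically isomorphic to $\UnNerf([m]\times[n])$, both assigning to $[k]$ the pairs of order-preserving maps into $[m]$ and $[n]$. Since the poset $[m]\times[n]$ has terminal object $(m,n)$, its classifying space is contractible, whence the canonical map $\Delta_m \times \Delta_n \to \Delta_0$ lies in $\EquiQuillen$. Theorem~\ref{EquiEnsSimpCat} then places its image $i_\Delta(\Delta_m \times \Delta_n) \to i_\Delta \Delta_0$ in $\UnLocFondMin$, and Cisinski's theorem~\ref{CisinskiGrothendieck} guarantees $\UnLocFondMin \subseteq \UnLocFond{W}$. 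Since $\Delta_0$ is the terminal presheaf, $i_\Delta \Delta_0$ is isomorphic to $\Delta$ itself, which is asph\'erique by the first step; the 2-out-of-3 property FS2 then gives the asphericity of $i_\Delta(\Delta_m \times \Delta_n)$, as desired.

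The main conceptual pitfall I would want to sidestep is an apparently natural but circular induction: if one tries to establish the asphericity of $i_\Delta(\Delta_m \times \Delta_n)$ by applying axiom LC to the last-vertex functor $L : i_\Delta(\Delta_m \times \Delta_n) \to [m]\times[n]$, the fiber $L/(i,j)$ turns out to be canonically isomorphic to $i_\Delta(\Delta_i \times \Delta_j)$, which is an object of the same kind, so induction on $m+n$ fails to shrink the problem. The use of Cisinski's minimality theorem short-circuits this loop by transferring the topological contractibility of products of simplices uniformly across every fundamental localizer.
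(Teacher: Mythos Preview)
Your argument is correct, but it takes a route quite different from the one the paper invokes. The paper defers to \cite[proposition 1.6.13]{THG}, whose proof is entirely elementary: after the same identification $\delta_\Delta/([m],[n]) \cong \Delta/\UnNerf([m]\times[n])$ that you make, one shows by hand that $\Delta/\UnNerf C$ is \emph{contractible} whenever $C$ admits a terminal object, by writing down an explicit zig-zag of natural transformations between the identity and a constant endofunctor (this is exactly the manoeuvre sketched later in the proof of Proposition~\ref{UnSupAspherique}). Contractibility then gives asphericity directly from axioms LA and LB, with no appeal to $\UnLocFondMin$ or to Cisinski's theorem.

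Your detour through Theorems~\ref{EquiEnsSimpCat} and~\ref{CisinskiGrothendieck} works, but it is heavier than necessary and carries a risk you should be aware of: Cisinski's proof of the minimality of $\UnLocFondMin$ in \cite{LFM} itself relies on machinery (bisimplicial arguments, the equivalence $\UnLocFond{W} = \UnNerf^{-1}i_\Delta^{-1}\UnLocFond{W}$) that ultimately rests on $\Delta$ being totally asph\'erique for an arbitrary localisateur fondamental. So while your proof is internally consistent with the paper's ordering of cited statements, unwinding the references reveals a genuine circularity. The elementary contractibility argument in \cite{THG} avoids this entirely, and incidentally resolves the ``circular induction'' you correctly diagnose without needing minimality at all: the point is not to induct on $m+n$ via LC, but to exhibit a homotopy directly.
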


\begin{proof}
La $\UnLocFondMin$-asphéricité de $\Delta$ est évidente en vertu de la remarque \ref{RemarqueTotalementAspherique} (cela résulte de la stabilité des équivalences faibles simpliciales par produit). On peut donc conclure dans le cas d'un localisateur fondamental quelconque de $\Cat$ en vertu de la minimalité de $\UnLocFondMin$ (théorème \ref{CisinskiGrothendieck}). Pour un argument plus élémentaire, n'utilisant pas cette propriété de minimalité, nous renvoyons le lecteur à la démonstration de \cite[proposition 1.6.13]{THG}.
\end{proof}

\begin{prop}\label{LemmeBisimplicial}
Soient $A$ une petite catégorie totalement asphérique et $f$ un morphisme de $\widehat{A \times A}$. Si, pour tout objet $a$ de $A$, le morphisme $f(a,\bullet)$ est dans $\UnLocFond{W}_{\widehat{A}}$, alors $\delta_{A}^{*}(f)$ est dans $\UnLocFond{W}_{\widehat{A}}$.
\end{prop}

\begin{proof}
En vertu des hypothèses et du lemme \ref{Queneau}, $f$ est dans $\UnLocFond{W}_{\widehat{A \times A}}$. Le foncteur diagonal $\delta_{A} : A \to A \times A$ étant asphérique par hypothèse, le résultat découle de la proposition \ref{CasParticulier1.2.9.THG}.
\end{proof} 

\begin{prop}\label{UnSupAspherique}
Pour toute petite catégorie $A$, le foncteur 
$$
\begin{aligned}
\SupUnObjet{A} : \Delta / \UnNerf{A} &\to A
\\
([m], x) &\mapsto x_{m}
\end{aligned}
$$
est asphérique (donc en particulier une équivalence faible).
\end{prop}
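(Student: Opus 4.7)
The plan is to fix an object $a$ of $A$ and establish that the slice category $(\Delta/\UnNerf A)/a$ is $\UnLocFond{W}$-asphérique, which by the definition of an asphérique functor is exactly what is needed.

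First, I would identify this slice with $\Delta/\UnNerf(A/a)$. An object of $(\Delta/\UnNerf A)/a$ is a triple $\bigl(([m], x_0\to\cdots\to x_m), p:x_m\to a\bigr)$, and this is precisely the data of a chain $(x_0\to a)\to\cdots\to(x_m\to a)$ in $A/a$, i.e.\ an $m$-simplex of $\UnNerf(A/a)$. One checks that morphisms on both sides correspond as well.

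Next, writing $B=A/a$ and $\top=\IdObjet{a}$ for its final object, I would contract $\Delta/\UnNerf B$ via an endofunctor $F$ that appends $\top$ to every chain:
$$
F([m], y_0\to\cdots\to y_m) = ([m+1], y_0\to\cdots\to y_m\to\top),
$$
extending a morphism $\psi:[m]\to[m']$ to $F(\psi):[m+1]\to[m'+1]$ by sending $m+1$ to $m'+1$. There are then natural transformations $\alpha:\IdObjet{\Delta/\UnNerf B}\Rightarrow F$ whose component at $([m],y)$ is the face $[m]\hookrightarrow[m+1]$ skipping the last vertex, and $\beta:G\Rightarrow F$ from the constant functor $G$ at $([0],\top)$, whose component is $[0]\to[m+1]$ with $0\mapsto m+1$. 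Naturality is immediate in both cases.

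I would then invoke the general principle that a natural transformation between two functors $C\to D$ identifies them in $\Localisation{\Cat}{\UnLocFond{W}}$. Indeed, such a transformation is the same as a functor $C\times[1]\to D$, and the two inclusions $i_0,i_1:C\to C\times[1]$, being sections of the projection $p:C\times[1]\to C$, become equal in the localisation as soon as $p$ belongs to $\UnLocFond{W}$. The zigzag $\IdObjet{\Delta/\UnNerf B}\Rightarrow F\Leftarrow G$ thus yields $\IdObjet{\Delta/\UnNerf B}=G$ in $\Localisation{\Cat}{\UnLocFond{W}}$; since $G$ factors through $e$, this forces the canonical functor $\Delta/\UnNerf B\to e$ to be a $\UnLocFond{W}$-equivalence, whence $(\Delta/\UnNerf A)/a$ is asphérique.

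The main obstacle is establishing that $p:C\times[1]\to C$ lies in $\UnLocFond{W}$ for every small category $C$. I would handle this with LC: it suffices that $(C\times[1])/c$ be asphérique for every object $c$ of $C$. But this slice is canonically isomorphic to $(C/c)\times[1]$, which admits the final object $(\IdObjet{c},1)$, hence is asphérique by LB. The rest of the argument is routine bookkeeping.
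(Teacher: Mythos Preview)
Your proof is correct and follows the same strategy as the paper's: identify $(\Delta/\UnNerf A)/a$ with $\Delta/\UnNerf(A/a)$, then contract the latter by appending the terminal object of $A/a$ to every simplex, exactly the ``endomorphisme constant homotope à l'identité'' the paper alludes to. One refinement: rather than concluding via equality in $\Localisation{\Cat}{\UnLocFond{W}}$ (which would require strong saturation to recover $r\in\UnLocFond{W}$), note that your argument for $p:C\times[1]\to C\in\UnLocFond{W}$ combined with two-out-of-three shows directly that a natural transformation propagates membership in $\UnLocFond{W}$, so the zigzag gives $G\in\UnLocFond{W}$, after which FS3 yields $r\in\UnLocFond{W}$.
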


\begin{proof}
C'est \cite[proposition 2.2.3]{LFM}, attribuée à Grothendieck. On vérifie facilement que, pour tout objet $a$ de $A$, la catégorie $(\Delta / \UnNerf{A}) / a$ s'identifie canoniquement à $\Delta / \UnNerf{(A/a)}$. Or, la catégorie $A/a$ admet un objet final et, pour toute petite catégorie $C$ admettant un objet final, la catégorie $\Delta / \UnNerf{C}$ est contractile, comme on peut le montrer en en construisant un endomorphisme constant homotope à l'identité. 
\end{proof}

\begin{prop}\label{Sade}
On a l'égalité
$$
\UnLocFond{W} = \UnNerf^{-1} ({i_{\Delta}}^{-1} (W))
$$
\end{prop}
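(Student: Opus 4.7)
The plan is to exploit Proposition~\ref{UnSupAspherique} together with the weak saturation of $\UnLocFond{W}$. For any small category $A$, the functor $\SupUnObjet{A} : \Delta/\UnNerf{A} \to A$ is aspherical, hence an element of $\UnLocFond{W}$. Moreover, by inspection of its defining formula $([m], x) \mapsto x_{m}$, this functor is natural in $A$: for any $u : A \to B$ in $\Cat$, the square
$$
\xymatrix{
\Delta/\UnNerf{A} \ar[r]^{i_{\Delta}(\UnNerf u)} \ar[d]_{\SupUnObjet{A}} & \Delta/\UnNerf{B} \ar[d]^{\SupUnObjet{B}} \\
A \ar[r]_{u} & B
}
$$
commutes, since the square sends $([m], x)$ along both paths to $u(x_{m}) = (u \circ x)_{m}$.

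By Proposition~\ref{UnSupAspherique} the two vertical arrows lie in $\UnLocFond{W}$. Factoring the diagonal of the square as $u \circ \SupUnObjet{A} = \SupUnObjet{B} \circ i_{\Delta}(\UnNerf u)$ and applying the 2-out-of-3 axiom FS2 (inside the weakly saturated class $\UnLocFond{W}$, by LA) to each of the two triangles obtained by drawing in the diagonal yields the equivalence
$$
u \in \UnLocFond{W} \quad \Longleftrightarrow \quad i_{\Delta}(\UnNerf u) \in \UnLocFond{W},
$$
which is exactly the claimed equality $\UnLocFond{W} = \UnNerf^{-1}(i_{\Delta}^{-1}(\UnLocFond{W}))$.

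No genuine obstacle is anticipated: the naturality of $\SupUnObjet{\bullet}$ is immediate from its explicit definition, and everything else has already been established in the section. The whole argument reduces to the observation that the aspherical natural comparison maps $\SupUnObjet{A}$ of Proposition~\ref{UnSupAspherique} provide a pointwise $\UnLocFond{W}$-equivalence between the identity functor of $\Cat$ and the composite $i_{\Delta} \circ \UnNerf$.
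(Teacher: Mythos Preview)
Your proof is correct and is exactly the argument the paper has in mind: it too invokes Proposition~\ref{UnSupAspherique} and a ``$2$ sur $3$'' argument, which amounts precisely to the naturality square you wrote down together with the weak saturation of~$\UnLocFond{W}$. You have simply made explicit the commutative square and the two triangles that the paper leaves to the reader.
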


\begin{proof}
C'est une conséquence immédiate de la proposition \ref{UnSupAspherique}, par un argument de « $2$ sur $3$ ». 
\end{proof}

\section{Formalisme $2$-catégorique}

\begin{paragr}
On suppose connues les notions de \deux{}catégorie, \DeuxFoncteurStrict{}, \DeuxFoncteurLax{} et \DeuxFoncteurCoLax{}\footnote{Parfois qualifié d'« oplax » dans la littérature.}. Notre vocabulaire et nos notations (parfois idiosyncrasiques) seront similaires à ceux de \cite{ArticleThAMoi}. Nous noterons $\DeuxCat$ la catégorie dont les objets sont les petites \deux{}catégories et dont les morphismes sont les \DeuxFoncteursStricts{}. Nous noterons $\DeuxCatLax$ la catégorie dont les objets sont les petites \deux{}catégories et dont les morphismes sont les \DeuxFoncteursLax{}. Étant donné un \DeuxFoncteurLax{} $u : \mathdeuxcat{A} \to \mathdeuxcat{B}$ et $f$ et $f'$ deux \un{}cellules de $\mathdeuxcat{A}$ telles que la composée $f'f$ fasse sens, nous noterons $u_{f',f}$ la \deux{}cellule structurale (« de composition ») $u(f')u(f) \Rightarrow u(f'f)$ et, pour tout objet $a$ de $\mathdeuxcat{A}$, nous noterons $u_{a}$ la \deux{}cellule structurale (« d'unité ») $1_{u(a)} \Rightarrow u(1_{a})$. Le \DeuxFoncteurLax{} $u : \mathdeuxcat{A} \to \mathdeuxcat{B}$ est dit \emph{normalisé} si, pour tout objet $a$ de $\mathdeuxcat{A}$, la \deux{}cellule structurale d'unité $u_{a}$ est une identité (en particulier, $u(1_{a}) = 1_{u(a)}$) et si, pour toute \un{}cellule $f : a \to a'$ de $\mathdeuxcat{A}$, les \deux{}cellules structurales de composition $u_{1_{a'}, f}$ et $u_{f, 1_{a}}$ sont des identités. On notera par un exposant « op » (resp. « co », resp. « coop ») le « changement de sens des \un{}cellules » (resp. des \deux{}cellules, resp. des \un{}cellules et des \deux{}cellules). Pour tout couple d'objets $x$ et $y$ d'une \deux{}catégorie $\mathdeuxcat{A}$, on notera $\CatHom{\mathdeuxcat{A}}{x}{y}$ la catégorie dont les objets sont les \un{}cellules de $\mathdeuxcat{A}$ et dont les morphismes sont les \deux{}cellules de $\mathdeuxcat{A}$.
\end{paragr}

\begin{df} 
Soient $u : \mathdeuxcat{A} \to \mathdeuxcat{B}$ un \DeuxFoncteurLax{} et $b$ un objet de $\mathdeuxcat{B}$. La \emph{\deux{}catégorie comma lax de $\mathdeuxcat{A}$ au-dessus de $b$ relativement à $u$} est la \deux{}catégorie $\TrancheLax{\mathdeuxcat{A}}{u}{b}$ définie comme suit :
\begin{itemize}
\item
Les objets de $\TrancheLax{\mathdeuxcat{A}}{u}{b}$ sont les couples $(a, p : u(a) \to b)$, avec $a$ un objet de $\mathdeuxcat{A}$ et $p$ une \un{}cellule de $\mathdeuxcat{B}$.
\item
Si $(a, p : u(a) \to b)$ et $(a', p' : u(a') \to b)$ sont deux objets de $\TrancheLax{\mathdeuxcat{A}}{u}{b}$, les \un{}cellules de $(a,p)$ vers $(a',p')$ dans $\TrancheLax{\mathdeuxcat{A}}{u}{b}$ sont les couples $(f : a \to a', \alpha : p \Rightarrow p' u(f))$, avec $f$ une \un{}cellule de $\mathdeuxcat{A}$ et $\alpha$ une \deux{}cellule de $\mathdeuxcat{B}$. Le diagramme à conserver en tête est le suivant : 
$$
\xymatrix{
u(a) 
\ar[rr]^{u(f)}
\ar[dr]_{p}
&{}
&u(a')
\ar[dl]^{p'}
\\
&b
\utwocell<\omit>{\alpha}
&.
}
$$
\item 
Si $(f, \alpha)$ et $(f', \alpha')$ sont deux \un{}cellules de $(a,p)$ vers $(a',p')$ dans $\TrancheLax{\mathdeuxcat{A}}{u}{b}$, les \deux{}cellules de $(f, \alpha)$ vers $(f', \alpha')$ dans $\TrancheLax{\mathdeuxcat{A}}{u}{b}$ sont les \deux{}cellules $\beta : f \Rightarrow f'$ dans $\mathdeuxcat{A}$ telles que $(p' \CompDeuxZero u(\beta)) \alpha = \alpha'$.
\item
Les diverses composées et identités sont « évidentes ».
\end{itemize}

Soient $u : \mathdeuxcat{A} \to \mathdeuxcat{B}$ un \DeuxFoncteurLax{} et $b$ un objet de $\mathdeuxcat{B}$. La \emph{\deux{}catégorie opcomma lax de $\mathdeuxcat{A}$ au-dessous de $b$ relativement à $u$} est la \deux{}catégorie $\OpTrancheLax{\mathdeuxcat{A}}{u}{b}$ définie par la formule 
$$
\OpTrancheLax{\mathdeuxcat{A}}{u}{b} = \DeuxCatUnOp{(\TrancheLax{(\DeuxCatUnOp{\mathdeuxcat{A}})}{\DeuxFoncUnOp{u}}{b})}.
$$

Soient $u : \mathdeuxcat{A} \to \mathdeuxcat{B}$ un \DeuxFoncteurCoLax{} et $b$ un objet de $\mathdeuxcat{B}$. La \emph{\deux{}catégorie comma colax de $\mathdeuxcat{A}$ au-dessus de $b$ relativement à $u$} est la \deux{}catégorie $\TrancheCoLax{\mathdeuxcat{A}}{u}{b}$ définie par la formule 
$$
\TrancheCoLax{\mathdeuxcat{A}}{u}{b} = \DeuxCatDeuxOp{(\TrancheLax{(\DeuxCatDeuxOp{\mathdeuxcat{A}})}{\DeuxFoncDeuxOp{u}}{b})}.
$$

Soient $u : \mathdeuxcat{A} \to \mathdeuxcat{B}$ un \DeuxFoncteurCoLax{} et $b$ un objet de $\mathdeuxcat{B}$. La \emph{\deux{}catégorie opcomma colax de $\mathdeuxcat{A}$ au-dessous de $b$ relativement à $u$} est la \deux{}catégorie $\OpTrancheCoLax{\mathdeuxcat{A}}{u}{b}$ définie par la formule 
$$
\OpTrancheCoLax{\mathdeuxcat{A}}{u}{b} = \DeuxCatUnOp{(\TrancheCoLax{(\DeuxCatUnOp{\mathdeuxcat{A}})}{\DeuxFoncUnOp{u}}{b})}.
$$

Si $a$ est un objet de la \deux{}catégorie $\mathdeuxcat{A}$, on notera $\TrancheLax{\mathdeuxcat{A}}{}{a}$ (resp. $\OpTrancheLax{\mathdeuxcat{A}}{}{a}$, resp. $\TrancheCoLax{\mathdeuxcat{A}}{}{a}$, resp. $\OpTrancheCoLax{\mathdeuxcat{A}}{}{a}$) la \deux{}catégorie $\TrancheLax{\mathdeuxcat{A}}{1_{\mathdeuxcat{A}}}{a}$ (resp. $\OpTrancheLax{\mathdeuxcat{A}}{1_{\mathdeuxcat{A}}}{a}$, resp. $\TrancheCoLax{\mathdeuxcat{A}}{1_{\mathdeuxcat{A}}}{a}$, resp. $\OpTrancheCoLax{\mathdeuxcat{A}}{1_{\mathdeuxcat{A}}}{a}$).
\end{df}

\medskip

On rappelle maintenant la définition, introduite dans \cite{ArticleThAMoi}, de la notion d'objet final (resp. initial) d'un objet d'une \deux{}catégorie.

\begin{df}\label{DefOF2}
On dira qu'un objet $z$ d'une \deux{}catégorie $\mathdeuxcat{A}$ \emph{admet un objet final} si, pour tout objet $a$ de $\mathdeuxcat{A}$, la catégorie $\CatHom{\mathdeuxcat{A}}{a}{z}$ admet un objet final. On dira qu'il \emph{admet un objet initial} s'il admet un objet final dans $\DeuxCatDeuxOp{\mathdeuxcat{A}}$, autrement dit si, pour tout objet $a$ de $\mathdeuxcat{A}$, la catégorie $\CatHom{\mathdeuxcat{A}}{a}{z}$ admet un objet initial. 
\end{df}

\begin{exemple}
Un objet de la \deux{}catégorie $\Cat$ admet un objet final (resp. initial) en ce sens si et seulement si c'est une catégorie admettant un objet final (resp. initial) au sens usuel ; c'est la raison de l'adoption de cette terminologie, suggérée par Jean Bénabou.
\end{exemple}

\begin{paragr}\label{DefOpAdmet}
Pour des raisons de commodité, l'on dira qu'une \deux{}catégorie $\mathdeuxcat{A}$ \emph{op\nobreakdash-admet} (resp. \emph{co\nobreakdash-admet}, resp. \emph{coop\nobreakdash-admet}) une certaine propriété si $\DeuxCatUnOp{\mathdeuxcat{A}}$ (resp. $\DeuxCatDeuxOp{\mathdeuxcat{A}}$, resp. $\DeuxCatToutOp{\mathdeuxcat{A}}$) vérifie cette propriété. 
\end{paragr}

\begin{exemple}\label{ExemplesOF}
Pour toute petite \deux{}catégorie $\mathdeuxcat{A}$ et tout objet $a$ de $\mathdeuxcat{A}$, la \deux{}catégorie $\TrancheCoLax{\mathdeuxcat{A}}{}{a}$ admet un objet admettant un objet final. Plus précisément, l'objet $(a, 1_{a})$ est tel que, pour tout objet $(a', p : a' \to a)$, le couple $(p, 1_{p})$ définit un objet final de la catégorie $\CatHom{\TrancheCoLax{\mathdeuxcat{A}}{}{a}}{(a',p)}{(a,1_{a})}$. Dualement, la \deux{}catégorie $\TrancheLax{\mathdeuxcat{A}}{}{a}$ admet un objet admettant un objet initial, la \deux{}catégorie $\OpTrancheCoLax{\mathdeuxcat{A}}{}{a}$ op-admet un objet admettant un objet final et la \deux{}catégorie $\OpTrancheLax{\mathdeuxcat{A}}{}{a}$ op-admet un objet admettant un objet initial. 
\end{exemple}

\begin{paragr}
Nous avons rappelé dans \cite{ArticleThAMoi} la construction explicite d'un adjoint à gauche de l'inclusion $\DeuxCat \hookrightarrow \DeuxCatLax$, cas particulier d'une construction bicatégorique de Bénabou. On notera $\TildeLax{\mathdeuxcat{A}}$ (resp. $\TildeLax{u}$) l'image par ce foncteur d'une \deux{}catégorie $\mathdeuxcat{A}$ (resp. d'un \DeuxFoncteurLax{} $u$). En notant $\eta$ et $\epsilon$ respectivement l'unité et la coünité de cette adjonction, on a donc en particulier, pour toute petite \deux{}catégorie $\mathdeuxcat{A}$, un \DeuxFoncteurLax{} 
$$
\LaxCanonique{\mathdeuxcat{A}} : \mathdeuxcat{A} \to \TildeLax{\mathdeuxcat{A}}
$$
et un \DeuxFoncteurStrict{} 
$$
\StrictCanonique{\mathdeuxcat{A}} : \TildeLax{\mathdeuxcat{A}} \to \mathdeuxcat{A}.
$$
On renvoie à \cite[section 1.12]{TheseMoi} ou \cite[section 5]{ArticleThAMoi} pour une description de cette adjonction. Muni des formules, on vérifie sans difficulté le lemme \ref{LemmeTranchesCouniteAspheriques}. C'est \cite[lemme 5.9]{ArticleThAMoi}. 
\end{paragr}

\begin{lemme}\label{LemmeTranchesCouniteAspheriques}
Pour toute petite \deux{}catégorie $\mathdeuxcat{A}$, pour tout objet $a$ de $\mathdeuxcat{A}$, la \deux{}catégorie $\TrancheCoLax{\TildeLax{\mathdeuxcat{A}}}{\StrictCanonique{\mathdeuxcat{A}}}{a}$ admet un objet admettant un objet final.
\end{lemme}

\begin{paragr}
Étant donné deux \DeuxFoncteursLax{} (ou deux \DeuxFoncteursCoLax{}) $u$ et $v$ de $\mathdeuxcat{A}$ vers $\mathdeuxcat{B}$, une \emph{transformation} $\sigma$ de $u$ vers $v$ correspond à la donnée d'une \un{}cellule $\sigma_{a} : u(a) \to v(a)$ dans $\mathdeuxcat{B}$ pour tout objet $a$ de $\mathdeuxcat{A}$ et d'une \deux{}cellule $\sigma_{f} : \sigma_{a'} u(f) \Rightarrow v(f) \sigma_{a}$ dans $\mathdeuxcat{B}$ pour toute \un{}cellule $f : a \to a'$ dans $\mathdeuxcat{A}$, ces données vérifiant les conditions de cohérence bien connues. On dira que la transformation $\sigma$ est \emph{relative aux objets} si $\sigma_{a} = 1_{u(a)} (= 1_{v(a)})$ pour tout objet $a$ de $\mathdeuxcat{A}$. Avec les mêmes données, une \emph{optransformation} de $u$ vers $v$ est une \DeuxTransformationLax{} de $\DeuxFoncUnOp{v}$ vers $\DeuxFoncUnOp{u}$. De façon plus explicite, cela revient à se donner une \un{}cellule $\sigma_{a} : u(a) \to v(a)$ dans $\mathdeuxcat{B}$ pour tout objet $a$ de $\mathdeuxcat{A}$ et une \deux{}cellule $\sigma_{f} : v(f) \sigma_{a} \Rightarrow \sigma_{a'} u(f)$ dans $\mathdeuxcat{B}$ pour toute \un{}cellule $f : a \to a'$ dans $\mathdeuxcat{A}$, ces données vérifiant les conditions de cohérence aussi bien connues que les précédentes. Si $u$ et $v$ sont stricts, nous appellerons \emph{transformation stricte} de $u$ vers $v$ une \DeuxTransformationLax{} (ou, ce qui revient au même dans ce cas précis, une \DeuxTransformationCoLax{}) $\sigma$ de $u$ vers $v$ telle que la \deux{}cellule $\sigma_{f}$ soit une identité pour toute \un{}cellule $f$ de $\mathdeuxcat{A}$. On notera $\DeuxCatDeuxCat$ la \deux{}catégorie dont la catégorie sous-jacente est $\DeuxCat$ et dont les \deux{}cellules sont les \DeuxTransformationsStrictes{} (c'est en fait la \deux{}catégorie sous-jacente à une 3\nobreakdash-catégorie dont les 3\nobreakdash-cellules sont les modifications).
\end{paragr} 

\begin{df}\label{DefMAdjoints}
On dira qu'un \DeuxFoncteurCoLax{} $u : \mathdeuxcat{A} \to \mathdeuxcat{B}$ est un \emph{préadjoint à gauche colax} si, pour tout objet $b$ de $\mathdeuxcat{B}$, la \deux{}catégorie $\TrancheCoLax{\mathdeuxcat{A}}{u}{b}$ admet un objet admettant un objet final.  

On dira qu'un \DeuxFoncteurLax{} $u : \mathdeuxcat{A} \to \mathdeuxcat{B}$ est un \emph{préadjoint à gauche lax} si $\DeuxFoncDeuxOp{u}$ est un préadjoint à gauche colax. Cette condition équivaut à la suivante : pour tout objet $b$ de $\mathdeuxcat{B}$, la \deux{}catégorie $\TrancheLax{\mathdeuxcat{A}}{u}{b}$ admet un objet admettant un objet initial. 

On dira qu'un \DeuxFoncteurCoLax{} $u : \mathdeuxcat{A} \to \mathdeuxcat{B}$ est un \emph{préadjoint à droite colax} si $\DeuxFoncUnOp{u}$ est un préadjoint à gauche colax. Cette condition équivaut à la suivante : pour tout objet $b$ de $\mathdeuxcat{B}$, la \deux{}catégorie $\OpTrancheCoLax{\mathdeuxcat{A}}{u}{b}$ op-admet un objet admettant un objet final. 

On dira qu'un \DeuxFoncteurLax{} $u : \mathdeuxcat{A} \to \mathdeuxcat{B}$ est un \emph{préadjoint à droite lax} si $\DeuxFoncToutOp{u}$ est un préadjoint à gauche colax. Cette condition équivaut à la suivante : pour tout objet $b$ de $\mathdeuxcat{B}$, la \deux{}catégorie $\OpTrancheLax{\mathdeuxcat{A}}{u}{b}$ op-admet un objet admettant un objet initial.
\end{df}

\begin{rem}
On renvoie à \cite[section 1.6]{TheseMoi} pour une discussion relative aux apparitions de cette notion dans la littérature antérieure.
\end{rem}

\begin{exemple}\label{EquiPreadjoint}
Toute équivalence de \deux{}catégories est un préadjoint à gauche lax, un préadjoint à gauche colax, un préadjoint à droite lax et un préadjoint à droite colax. Pour un rappel de la définition ainsi qu'une démonstration, on pourra se reporter à \cite[paragraphe 1.6.11 et proposition 1.6.12]{TheseMoi}.
\end{exemple}

\begin{rem}
À tout morphisme lax (resp. colax) $\mathdeuxcat{A} \to \mathdeuxcat{B}$ vérifiant l'une des conditions de la définition \ref{DefMAdjoints} est associé de façon canonique un morphisme colax (resp. lax) $\mathdeuxcat{B} \to \mathdeuxcat{A}$. Pour une démonstration, nous renvoyons le lecteur à \cite[1.6.16]{TheseMoi}. 
\end{rem}

\begin{df}\label{DefFibre}
Soient $u : \mathdeuxcat{A} \to \mathdeuxcat{B}$ un \DeuxFoncteurStrict{} et $b$ un objet de $\mathdeuxcat{B}$. On appelle \emph{fibre de} $u$ \emph{au-dessus de} $b$ la \deux{}catégorie, que l'on notera $\Fibre{\mathdeuxcat{A}}{u}{b}$, dont les objets sont les objets $a$ de $\mathdeuxcat{A}$ tels que $u(a) = b$, dont les \un{}cellules de $a$ vers $a'$ sont les \un{}cellules $f $ de $a$ vers $a'$ dans $\mathdeuxcat{A}$ telles que $u(f) = 1_{b}$ et dont les \deux{}cellules de $f$ vers $f'$ sont les \deux{}cellules $\alpha$ de $f$ vers $f'$ dans $\mathdeuxcat{A}$ telles que $u(\alpha) = 1_{1_{b}}$, les diverses compositions et unités provenant de celles de $\mathdeuxcat{A}$ de façon « évidente ».
\end{df}

\begin{rem}\label{FibresCoOp}
La \deux{}catégorie $\Fibre{(\DeuxCatUnOp{\mathdeuxcat{A}})}{(\DeuxFoncUnOp{u})}{b}$ (resp. $\Fibre{(\DeuxCatDeuxOp{\mathdeuxcat{A}})}{(\DeuxFoncDeuxOp{u})}{b}$, resp. $\Fibre{(\DeuxCatToutOp{\mathdeuxcat{A}})}{(\DeuxFoncToutOp{u})}{b}$) s'identifie canoniquement à $\DeuxCatUnOp{(\Fibre{\mathdeuxcat{A}}{u}{b})}$ (resp. $\DeuxCatDeuxOp{(\Fibre{\mathdeuxcat{A}}{u}{b})}$, resp. $\DeuxCatToutOp{(\Fibre{\mathdeuxcat{A}}{u}{b})}$).
\end{rem} 

\begin{df}\label{DefPrefibration}
On dira qu'un morphisme $u : \mathdeuxcat{A} \to \mathdeuxcat{B}$ de $\DeuxCat$ est une \emph{préfibration} si, pour tout objet $b$ de $\mathdeuxcat{B}$, le \DeuxFoncteurStrict{} canonique 
$$
\begin{aligned}
J_{b} : \Fibre{\mathdeuxcat{A}}{u}{b} &\to \OpTrancheCoLax{\mathdeuxcat{A}}{u}{b}
\\
a &\mapsto (a, 1_{b})
\\
f &\mapsto (f, 1_{1_{b}})
\\
\alpha &\mapsto \alpha
\end{aligned}
$$
est un préadjoint à gauche lax.

On dira qu'un  \DeuxFoncteurStrict{} $u : \mathdeuxcat{A} \to \mathdeuxcat{B}$ est une \emph{préopfibration} si $\DeuxFoncUnOp{u}$ est une préfibration, autrement dit si, pour tout objet $b$ de $\mathdeuxcat{B}$, le morphisme canonique $\Fibre{\mathdeuxcat{A}}{u}{b} \to \TrancheCoLax{\mathdeuxcat{A}}{u}{b}$ est un préadjoint à droite lax. 

On dira qu'un  \DeuxFoncteurStrict{} $u : \mathdeuxcat{A} \to \mathdeuxcat{B}$ est une \emph{précofibration} si $\DeuxFoncDeuxOp{u}$ est une préfibration, autrement dit si, pour tout objet $b$ de $\mathdeuxcat{B}$, le morphisme canonique $\Fibre{\mathdeuxcat{A}}{u}{b} \to \OpTrancheLax{\mathdeuxcat{A}}{u}{b}$ est un préadjoint à gauche colax. 

On dira qu'un  \DeuxFoncteurStrict{} $u : \mathdeuxcat{A} \to \mathdeuxcat{B}$ est une \emph{précoopfibration} si $\DeuxFoncToutOp{u}$ est une préfibration, autrement dit si, pour tout objet $b$ de $\mathdeuxcat{B}$, le morphisme canonique $\Fibre{\mathdeuxcat{A}}{u}{b} \to \TrancheLax{\mathdeuxcat{A}}{u}{b}$ est un préadjoint à droite colax.   
\end{df}

\begin{paragr}\label{DefDeuxInt}
On rappelle noter $\DeuxCatDeuxCat$ la \deux{}catégorie dont la catégorie sous-jacente est $\DeuxCat$ et dont les \deux{}cellules sont les \DeuxTransformationsStrictes{}. On a déjà défini dans \cite[section 3]{ArticleThAMoi} (voir \cite[section 1.10]{TheseMoi} pour davantage de détails), pour tout \DeuxFoncteurStrict{} $F : \mathdeuxcat{A} \to \DeuxCatDeuxCat$, une \deux{}catégorie $\DeuxInt{\mathdeuxcat{A}}F$ comme suit. 

Les objets de $\DeuxInt{\mathdeuxcat{A}}F$ sont les couples $(a, x)$, avec $a$ un objet de $\mathdeuxcat{A}$ et $x$ un objet de la \deux{}catégorie $F(a)$. 

Les \un{}cellules de $(a, x)$ vers $(a', x')$ dans $\DeuxInt{\mathdeuxcat{A}}F$ sont les couples
$$
(f : a \to a', r : F(f)(x) \to x')
$$
dans lesquels $f$ est une \un{}cellule de $a$ vers $a'$ dans $\mathdeuxcat{A}$ et $r$ une \un{}cellule de $F(f)(x)$ vers $x'$ dans $F(a')$. 

Les \deux{}cellules de $(f : a \to a', r : F(f)(x) \to x')$ vers $(g : a \to a', s : F(g)(x) \to x')$ dans $\DeuxInt{\mathdeuxcat{A}}F$ sont les couples
$$
(\gamma : f \Rightarrow g, \varphi : r \Rightarrow s (F(\gamma))_{x})
$$
dans lesquels $\gamma$ est une \deux{}cellule de $f$ vers $g$ dans $\mathdeuxcat{A}$ et $\varphi$ une \deux{}cellule de $r$ vers $s (F(\gamma))_{x}$ dans $F(a')$.

Les diverses unités et compositions sont définies de façon « évidente ». 
\end{paragr}

\begin{paragr}\label{DefDeuxIntOp}
De façon duale, pour tout \DeuxFoncteurStrict{} $F : \DeuxCatUnOp{\mathdeuxcat{A}} \to \DeuxCatDeuxCat$, on définit une \deux{}catégorie $\DeuxIntOp{\mathdeuxcat{A}} F$ par la formule
$$
\DeuxIntOp{\mathdeuxcat{A}} F = \DeuxCatToutOp{\left( \DeuxInt{\DeuxCatToutOp{\mathdeuxcat{A}}} \DeuxFoncDeuxOp{(?^{coop} \circ F)} \right)}.
$$
En particulier, les objets sont les couples $(a,x)$, avec $a$ un objet de $\mathdeuxcat{A}$ et $x$ un objet de $F(a)$. Les \un{}cellules de $(a,x)$ vers $(a',x')$ sont les couples $(f : a \to a', r : x \to F(f)(x'))$, avec $f$ une \un{}cellule de $\mathdeuxcat{A}$ et $r$ une \un{}cellule de $F(a)$. Les \deux{}cellules de $(f,r)$ vers $(g,s)$ sont les couples $(\gamma : f \Rightarrow g, \varphi : (F(\gamma))_{x'} r \Rightarrow s)$, avec $\gamma$ une \deux{}cellule de $\mathdeuxcat{A}$ et $\varphi$ une \deux{}cellule de $F(a)$.
\end{paragr}

\begin{rem}
En considérant la catégorie des ensembles comme une sous-catégorie de $\Cat$, on peut considérer la restriction de $\DeuxIntOp{\mathdeuxcat{A}}$ à la catégorie $\widehat{A}$. Ce n'est rien d'autre que le foncteur $i_{A}$.  
\end{rem}

\begin{paragr}\label{DefDeuxIntCo}
De façon duale, pour tout \DeuxFoncteurStrict{} $F : \DeuxCatDeuxOp{\mathdeuxcat{A}} \to \DeuxCatDeuxCat$, on définit une \deux{}catégorie $\DeuxIntCo{\mathdeuxcat{A}} F$ par la formule
$$
\DeuxIntCo{\mathdeuxcat{A}} F = \DeuxCatDeuxOp{\left(\DeuxInt{\DeuxCatDeuxOp{\mathdeuxcat{A}}} \left(\DeuxFoncDeuxOp{?} \circ F\right)\right)}.
$$ 
En particulier, les objets sont les couples $(a,x)$, avec $a$ un objet de $\mathdeuxcat{A}$ et $x$ un objet de $F(a)$. Les \un{}cellules de $(a,x)$ vers $(a', x')$ sont les couples $(f : a \to a', r : F(f)(x) \to x')$, $f$ et $r$ étant des \un{}cellules de $\mathdeuxcat{A}$ et $F(a')$ respectivement. Les \deux{}cellules de $(f,r)$ vers $(g,s)$ sont les couples $(\gamma : f \Rightarrow g, \varphi : r (F(\gamma))_{x} \Rightarrow s)$, $\gamma$ et $\varphi$ étant des \deux{}cellules dans $\mathdeuxcat{A}$ et $F(a')$ respectivement.
\end{paragr}

\begin{paragr}\label{JaKa}
Soient $F : \mathdeuxcat{A} \to \DeuxCatDeuxCat$ un \DeuxFoncteurStrict{} et $a$ un objet de ${\mathdeuxcat{A}}$. La projection canonique
$$
\begin{aligned}
P_{F} : \DeuxInt{\mathdeuxcat{A}}F &\to \mathdeuxcat{A}
\\
(a,x) &\mapsto a
\\
(f,r) &\mapsto f
\\
(\gamma, \varphi) &\mapsto \gamma
\end{aligned}
$$ 
est un \DeuxFoncteurStrict{}. Des calculs ne présentant guère de difficulté (voir \cite[proposition 1.10.7]{TheseMoi} ou \cite[proposition 3.2]{ArticleThAMoi}) permettent de vérifier que le \DeuxFoncteurStrict{} canonique $J_{a} : \Fibre{\left(\DeuxInt{\mathdeuxcat{A}}F\right)}{P_{F}}{a} \to \TrancheLax{\left(\DeuxInt{\mathdeuxcat{A}}F\right)}{P_{F}}{a}$ est un préadjoint à droite colax. Autrement dit, $P_{F}$ est une précoopfibration. On vérifie que le \DeuxFoncteurStrict{}
$$
\begin{aligned}
K_{a} : \TrancheLax{\left(\DeuxInt{\mathdeuxcat{A}}F\right)}{P_{F}}{a} &\longrightarrow \Fibre{\left(\DeuxInt{\mathdeuxcat{A}}F\right)}{P_{F}}{a}
\\
((a',x'), p : a' \to a) &\longmapsto (a, F(p)(x'))
\\
((f : a' \to a'', r : F(f)(x') \to x''), \sigma : p \Rightarrow p'f) &\longmapsto (1_{a}, F(p')(r) (F(\sigma))_{x'})
\\
(\gamma, \varphi) &\longmapsto (1_{1_{a}}, F(p')(\varphi) \CompDeuxZero (F(\sigma))_{x'})
\end{aligned}
$$
est une rétraction de $J_{a}$. De plus, des calculs supplémentaires et tout aussi passionnants montrent qu'il s'agit d'un préadjoint à gauche lax\footnote{Comme on l'a déjà signalé, l'existence d'un morphisme lax allant dans le sens opposé à celui de $J_{a}$ résulte de propriétés générales des préadjoints. En revanche, le caractère strict de ce morphisme comme sa propriété d'être lui-même un préadjoint ne sont pas vérifiés en général.}. Des résultats analogues sont bien entendu valables pour les constructions duales introduites dans les paragraphes \ref{DefDeuxIntOp} et \ref{DefDeuxIntCo}.
\end{paragr}


\begin{df}\label{DefinitionNerfs}
Soient $\mathdeuxcat{A}$ une petite \deux{}catégorie et $m \geq 0$ un entier. 
\begin{itemize}
\item[$(i)$]
On note 
$$
FonLax([m], \mathdeuxcat{A})
$$
l'ensemble des \DeuxFoncteursLax{} de $[m]$ vers $\mathdeuxcat{A}$ et $\NerfLax{\mathdeuxcat{A}}$ l'ensemble simplicial
$$
\begin{aligned}
\NerfLax{\mathdeuxcat{A}} : \Delta^{op} &\to \Ens
\\
[m] &\mapsto FonLax([m], \mathdeuxcat{A})
\end{aligned}
$$
dont les faces et dégénérescences sont définies de la façon « évidente ».

Cela permet de définir un foncteur \emph{nerf lax}
$$
\begin{aligned}
\NerfLax : \DeuxCatLax &\to \EnsSimp
\\
\mathdeuxcat{A} &\mapsto \NerfLax \mathdeuxcat{A}
\\
u &\mapsto \NerfLax(u).
\end{aligned}
$$

\item[$(ii)$]
On note 
$$
FonLaxNor([m], \mathdeuxcat{A})
$$
l'ensemble des \DeuxFoncteursLax{} normalisés de $[m]$ vers $\mathdeuxcat{A}$ et $\NerfLaxNor{\mathdeuxcat{A}}$ l'ensemble simplicial 
$$
\begin{aligned}
\NerfLaxNor{\mathdeuxcat{A}} : \Delta^{op} &\to \Ens
\\
[m] &\mapsto FonLaxNor([m], \mathdeuxcat{A})
\end{aligned}
$$
dont les faces et dégénérescences sont définies de la façon « évidente ». 

Cela permet de définir un foncteur \emph{nerf lax normalisé}
$$
\begin{aligned}
\NerfLaxNor : \DeuxCat &\to \EnsSimp
\\
\mathdeuxcat{A} &\mapsto \NerfLaxNor \mathdeuxcat{A}
\\
u &\mapsto \NerfLaxNor(u).
\end{aligned}
$$

\item[$(iii)$]
On note 
$$
\underline{FonLaxNor}([m], \mathdeuxcat{A})
$$
la catégorie dont les objets sont les \DeuxFoncteursLax{} normalisés de $[m]$ vers $\mathdeuxcat{A}$ et dont les morphismes sont les \DeuxTransformationsLax{} relatives aux objets entre tels \DeuxFoncteursLax{} normalisés et $\NerfCatLaxNor{\mathdeuxcat{A}}$ l'objet simplicial de $\Cat$ 
$$
\begin{aligned}
\NerfCatLaxNor{\mathdeuxcat{A}} : \Delta^{op} &\to \Cat
\\
[m] &\mapsto \underline{FonLaxNor}([m], \mathdeuxcat{A})
\end{aligned}
$$
dont les faces et dégénérescences sont définies de la façon « évidente ». 

Cela permet de définir un foncteur \emph{nerf lax normalisé catégorique}
$$
\begin{aligned}
\NerfCatLaxNor : \DeuxCat &\to \CatHom{CAT}{\Delta^{op}}{\Cat} 
\\
\mathdeuxcat{A} &\mapsto \NerfCatLaxNor \mathdeuxcat{A}
\\
u &\mapsto \NerfCatLaxNor(u).
\end{aligned}
$$

\item[$(iv)$]
On note 
$$
\underline{Fon}([m], \mathdeuxcat{A})
$$
la catégorie dont les objets sont les \DeuxFoncteursStricts{} de $[m]$ vers $\mathdeuxcat{A}$ et dont les morphismes sont les \DeuxTransformationsLax{} relatives aux objets entre tels morphismes de $\DeuxCat$. Autrement dit, $\underline{Fon}([m], \mathdeuxcat{A})$ est la catégorie 
$$
\coprod_{\substack{(a_{0}, \dots, a_{m}) \in (\Objets{\mathdeuxcat{A}})^{m+1}}} \CatHom{\mathdeuxcat{A}}{a_{0}}{a_{1}} \times \dots \times \CatHom{\mathdeuxcat{A}}{a_{m-1}}{a_{m}}.
$$
On note 
$\NerfHom{\mathdeuxcat{A}}$ l'objet simplicial de $\Cat$ 
$$
\begin{aligned}
\NerfHom{\mathdeuxcat{A}} : \Delta^{op} &\to \Cat
\\
[m] &\mapsto \underline{Fon}([m], \mathdeuxcat{A})
\end{aligned}
$$
dont les faces et dégénérescences sont définies de la façon « évidente ». 

Cela permet de définir un foncteur \emph{nerf strict}
$$
\begin{aligned}
\NerfHom : \DeuxCat &\to \CatHom{CAT}{\Delta^{op}}{\Cat} 
\\
\mathdeuxcat{A} &\mapsto \NerfHom \mathdeuxcat{A}
\\
u &\mapsto \NerfHom(u).
\end{aligned}
$$
\end{itemize}
\end{df}

\begin{rem}
Insistons sur le fait que nous avons considéré le domaine de définition du nerf lax $\NerfLax$ et du nerf lax normalisé $\NerfLaxNor$ comme étant $\DeuxCatLax$ et $\DeuxCat$ respectivement. En particulier, bien que le nerf lax normalisé soit fonctoriel sur les \DeuxFoncteursLax{} normalisés, nous ne le considérons pas, sauf mention contraire, comme un foncteur de domaine la catégorie dont les objets sont les petites \deux{}catégories et dont les morphismes sont les \DeuxFoncteursLax{} normalisés, ce qui serait possible. Ce nerf n'est toutefois pas fonctoriel sur les \DeuxFoncteursLax{} généraux, propriété du nerf $\NerfLax$ que nous utiliserons de façon cruciale. Le nerf $\NerfLaxNor$ est cependant plus familier aux catégoriciens. On pourra par exemple se reporter à \cite[section 10]{Street}. On peut le construire de la façon suivante : pour tout entier positif $m$, il existe une \deux{}catégorie dont les objets sont les entiers de $0$ à $m$, dont les \un{}cellules de $i$ vers $j$ sont les sous-ensembles de $\{i, \dots, j\}$ contenant $i$ et $j$ et dont les \deux{}cellules entre telles \un{}cellules parallèles sont définies par l'opposée de la relation d'inclusion. Cela fournit un foncteur $\Delta \to \DeuxCat$ et donc, par un procédé classique de Kan, une adjonction entre $\EnsSimp$ et $\DeuxCat$, dont l'adjoint à droite n'est autre que le nerf $\NerfLaxNor$.
\end{rem}

\begin{paragr}
Pour tout entier $m \geq 0$, toute petite \deux{}catégorie $\mathdeuxcat{A}$ et tout \DeuxFoncteurLax{} $x : [m] \to \mathdeuxcat{A}$, on notera $x_{i}$ l'image de $0 \leq i \leq m$ par $x$, $x_{j,i}$ l'image du morphisme $i \to j$ de $[m]$ par $x$ et $x_{k,j,i}$ la \deux{}cellule structurale $x_{j \to k, i \to j} : x_{k,j} x_{j,i} \Rightarrow x_{k,i}$ pour $0 \leq i \leq j \leq k \leq m$. Afin d'éviter toute confusion avec l'objet $x_{i}$, on notera $(x)_{i}$ la \deux{}cellule structurale d'unité de $x$ associée à l'objet $i$ de $[m]$. On a donc $(x)_{i} : 1_{x_{i}} \Rightarrow x_{i,i}$. 

\end{paragr}

\begin{paragr}
Soit $\mathdeuxcat{A}$ une petite \deux{}catégorie. On définit un \DeuxFoncteurLax{}
$$
\SupLaxObjet{\mathdeuxcat{A}} : \Delta / \NerfLax{\mathdeuxcat{A}} \to \mathdeuxcat{A}
$$
comme suit. Pour tout objet $([m], x)$ de $\Delta / \NerfLax{\mathdeuxcat{A}}$, 
$$
\SupLaxObjet{\mathdeuxcat{A}}([m], x) = x_{m}.
$$

Pour tout morphisme simplicial $\varphi : [m] \to [n]$ définissant un morphisme de $([m], x)$ vers $([n], y)$ dans $\Delta /\NerfLax{\mathdeuxcat{A}}$, 
$$
\SupLaxObjet{\mathdeuxcat{A}} (\varphi : ([m], x) \to ([n], y)) = y_{n,\varphi(m)}.
$$

Pour tout objet $([m], x)$ de $\Delta / \NerfLax{\mathdeuxcat{A}}$, 
$$
\DeuxCellStructId{\SupLaxObjet{\mathdeuxcat{A}}}{([m], x)} = \DeuxCellStructId{(x)}{m} : 1_{x_{m}} \Rightarrow x(1_{m}). 
$$

Pour tout couple de morphismes composables $\varphi : ([m], x) \to ([n], y)$ et $\psi : ([n], y) \to ([p], z)$ de $\Delta / \NerfLax{\mathdeuxcat{A}}$, 
$$
\DeuxCellStructComp{\SupLaxObjet{\mathdeuxcat{A}}}{\psi}{\varphi} = z_{p, \psi(n), \psi(\varphi(m))}.
$$
\end{paragr}

\begin{paragr}
Soit $\mathdeuxcat{A}$ une petite \deux{}catégorie. On définit un \DeuxFoncteurLax{} normalisé
$$
\SupLaxNorObjet{\mathdeuxcat{A}} : \Delta / \NerfLaxNor{\mathdeuxcat{A}} \to \mathdeuxcat{A}
$$
par la condition de commutativité du diagramme
$$
\xymatrix{
\DeuxIntOp{\Delta}\NerfLax{\mathdeuxcat{A}}
\ar[rd]_{\SupLaxObjet{\mathdeuxcat{A}}}
&&
\DeuxIntOp{\Delta}\NerfLaxNor{\mathdeuxcat{A}}
\ar[ll]_{i_{l,n}^{l}\mathdeuxcat{A}}
\ar[ld]^{\SupLaxNorObjet{\mathdeuxcat{A}}}
\\
&
\mathdeuxcat{A}
&,
}
$$
dans lequel $i_{l,n}^{l}\mathdeuxcat{A}$ désigne l'inclusion canonique.
\end{paragr}

\begin{paragr}
Soit $\mathdeuxcat{A}$ une petite \deux{}catégorie. On définit un \DeuxFoncteurLax{} normalisé
$$
\SupCatLaxNorObjet{\mathdeuxcat{A}} : \DeuxIntOp{\Delta} \NerfCatLaxNor{\mathdeuxcat{A}} \to \mathdeuxcat{A}
$$
comme suit. 

Pour tout objet $([m], x)$ de $\DeuxIntOp{\Delta} \NerfCatLaxNor{\mathdeuxcat{A}}$, 
$$
\SupCatLaxNorObjet{\mathdeuxcat{A}}([m], x) = x_{m}.
$$

Pour tout morphisme $(\varphi, \alpha) : ([m], x) \to ([n], y)$ de $\DeuxIntOp{\Delta} \NerfCatLaxNor{\mathdeuxcat{A}}$, 
$$
\SupCatLaxNorObjet{\mathdeuxcat{A}} (\varphi, \alpha) = y_{n,\varphi(m)}.
$$

Pour tout objet $([m], x)$ de $\DeuxIntOp{\Delta} \NerfCatLaxNor{\mathdeuxcat{A}}$
$$
\DeuxCellStructId{\SupCatLaxNorObjet{\mathdeuxcat{A}}}{([m],x)} = \DeuxCellStructId{(x)}{m} = 1_{1_{x_{m}}}.
$$

Pour tout couple de morphismes composables $(\varphi, \alpha) : ([m], x) \to
([n], y)$ et $(\psi, \beta) : ([n], y) \to ([p], z)$ de $\DeuxIntOp{\Delta} \NerfCatLaxNor{\mathdeuxcat{A}}$, 
$$
\DeuxCellStructComp{\SupCatLaxNorObjet{\mathdeuxcat{A}}}{(\psi, \beta)}{(\varphi, \alpha)} = z_{p, \psi(n), \psi(\varphi(m))} \CompDeuxUn (z_{p, \psi(n)} \CompDeuxZero \beta_{n, \varphi(m)}).
$$
\end{paragr}

\begin{paragr}
Soit $\mathdeuxcat{A}$ une petite \deux{}catégorie. On définit un \DeuxFoncteurLax{} normalisé
$$
\SupHomObjet{\mathdeuxcat{A}} : \DeuxIntOp{\Delta} \NerfHom{\mathdeuxcat{A}} \to \mathdeuxcat{A}
$$
par la condition de commutativité du diagramme
$$
\xymatrix{
\DeuxIntOp{\Delta}\NerfCatLaxNor{\mathdeuxcat{A}}
\ar[dr]_{\SupCatLaxNorObjet{\mathdeuxcat{A}}}
&&
\DeuxIntOp{\Delta}\NerfHom{\mathdeuxcat{A}}
\ar[ll]_{i_{hom}^{\underline{l,n}}\mathdeuxcat{A}}
\ar[ld]^{\SupHomObjet{\mathdeuxcat{A}}}
\\
&
\mathdeuxcat{A}
&,
}
$$
dans lequel $i_{hom}^{\underline{l,n}}\mathdeuxcat{A}$ désigne l'inclusion canonique.
\end{paragr}

\begin{lemme}\label{DiagrammeSups}
Pour toute petite \deux{}catégorie $\mathdeuxcat{A}$, le diagramme
$$
\xymatrix{
\DeuxIntOp{\Delta}\NerfLax{\mathdeuxcat{A}}
\ar@/_1.5pc/[rrrdd]_{\SupLaxObjet{\mathdeuxcat{A}}}
&&
\DeuxIntOp{\Delta}\NerfLaxNor{\mathdeuxcat{A}}
\ar[ll]_{i_{l,n}^{l}\mathdeuxcat{A}}
\ar@/_0.5pc/[rdd]_{\SupLaxNorObjet{\mathdeuxcat{A}}}
\ar[rr]^{i_{l,n}^{\underline{l,n}}\mathdeuxcat{A}}
&&
\DeuxIntOp{\Delta}\NerfCatLaxNor{\mathdeuxcat{A}}
\ar@/^0.5pc/[ldd]^{\SupCatLaxNorObjet{\mathdeuxcat{A}}}
&&
\DeuxIntOp{\Delta}\NerfHom{\mathdeuxcat{A}}
\ar[ll]_{i_{hom}^{\underline{l,n}}\mathdeuxcat{A}}
\ar@/^1.5pc/[llldd]^{\SupHomObjet{\mathdeuxcat{A}}}
\\
\\
&&&
\mathdeuxcat{A}
}
$$
est commutatif, les flèches horizontales désignant les inclusions canoniques. 
\end{lemme}

\begin{proof}
En vertu des définitions, il suffit de vérifier l'égalité 
$
\SupLaxNorObjet{\mathdeuxcat{A}} = \SupCatLaxNorObjet{\mathdeuxcat{A}} \phantom{a} i_{l,n}^{\underline{l,n}}\mathdeuxcat{A}
$, ce qui ne pose aucune difficulté. 
\end{proof}

\section{De $\tCat$ à $\tDeuxCat$}

À toute classe $\UnLocFond{S}$ de morphismes de $\Cat$, on peut associer une classe $\NerfLaxNor^{-1} (i_{\Delta}^{-1} (\UnLocFond{S}))$ de morphismes de $\DeuxCat$. On étudie maintenant les premières propriétés des classes de \DeuxFoncteursStricts{} obtenues par un tel procédé à partir d'un \ClasseUnLocFond{}, avant d'en entreprendre une étude plus axiomatique dans la section suivante. 

\begin{prop}[Carrasco-Cegarra-Garz\'on]\label{ResultatCCG}
Pour toute petite \deux{}catégorie $\mathdeuxcat{A}$ et tout entier $m \geq 0$, les inclusions naturelles de catégories
$$
i_{l,n}^{l}\mathdeuxcat{A} : \DeuxIntOp{\Delta} \NerfLaxNor{\mathdeuxcat{A}} \hookrightarrow \DeuxIntOp{\Delta} \NerfLax{\mathdeuxcat{A}}
$$
$$
i_{l,n}^{\underline{l,n}}\mathdeuxcat{A} : \DeuxIntOp{\Delta} \NerfLaxNor{\mathdeuxcat{A}} \hookrightarrow \DeuxIntOp{\Delta}\NerfCatLaxNor{\mathdeuxcat{A}}
$$
et
$$
i_{hom}^{\underline{l,n}}\mathdeuxcat{A} : \DeuxIntOp{\Delta}\NerfHom{\mathdeuxcat{A}} \hookrightarrow \DeuxIntOp{\Delta}\NerfCatLaxNor{\mathdeuxcat{A}}
$$
sont dans $\UnLocFondMin$ (et donc des équivalences faibles pour tout \ClasseUnLocFond{}).
\end{prop}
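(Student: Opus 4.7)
Le plan consiste \`a ramener l'\'enonc\'e au th\'eor\`eme de comparaison entre les divers nerfs d'une \deux{}cat\'egorie d\^u \`a Carrasco, Cegarra et Garz\'on \cite{CCG}, en exploitant les outils homotopiques d\'evelopp\'es plus haut dans la section. En vertu de la proposition \ref{Sade}, il suffit de montrer que les nerfs des trois inclusions consid\'er\'ees sont des \'equivalences faibles simpliciales.

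Pour chacun des objets simpliciaux en jeu, on dispose d'une comparaison naturelle, \`a \'equivalence faible pr\`es, entre le nerf de la \deux{}int\'egration $\DeuxIntOp{\Delta}(-)$ correspondante et, soit l'ensemble simplicial lui-m\^eme (pour $\NerfLax{\mathdeuxcat{A}}$ et $\NerfLaxNor{\mathdeuxcat{A}}$, qui sont des ensembles simpliciaux, via l'\'equivalence faible classique ``dernier sommet'' $\Delta/X \to X$, analogue simplicial de la proposition \ref{UnSupAspherique}), soit la diagonale de l'ensemble bisimplicial obtenu par nervation niveau par niveau (pour $\NerfCatLaxNor{\mathdeuxcat{A}}$ et $\NerfHom{\mathdeuxcat{A}}$, qui sont des cat\'egories simpliciales, via l'identification standard entre le nerf d'une construction de Grothendieck sur $\Delta^{op}$ et la diagonale du bisimplicial associ\'e). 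Ces comparaisons sont naturelles et compatibles aux trois inclusions, ce qui ram\`ene le probl\`eme \`a la v\'erification que les morphismes induits d'ensembles simpliciaux ou bisimpliciaux sont bien des \'equivalences faibles.

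Il reste donc \`a \'etablir : (i) que l'inclusion $\NerfLaxNor{\mathdeuxcat{A}} \hookrightarrow \NerfLax{\mathdeuxcat{A}}$ est une \'equivalence faible simpliciale ; (ii) que chacune des inclusions de $\NerfLaxNor{\mathdeuxcat{A}}$ (vu comme bisimplicial discret dans la seconde direction) et de $\UnNerf \circ \NerfHom{\mathdeuxcat{A}}$ dans $\UnNerf \circ \NerfCatLaxNor{\mathdeuxcat{A}}$ donne une \'equivalence faible au niveau des diagonales. Par le lemme \ref{LemmeBisimplicial} (avec $\Delta$ totalement asph\'erique en vertu de la proposition \ref{DeltaTotalementAspherique}), l'assertion (ii) se ram\`ene \`a l'\'enonc\'e niveau par niveau : pour tout $m \geq 0$, les inclusions de $FonLaxNor([m], \mathdeuxcat{A})$ (vu comme cat\'egorie discr\`ete) et de $\underline{Fon}([m], \mathdeuxcat{A})$ dans $\underline{FonLaxNor}([m], \mathdeuxcat{A})$ deviennent des \'equivalences faibles apr\`es nervation. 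Ces trois faits forment pr\'ecis\'ement le contenu classique de \cite{CCG} ; l'obstacle principal n'est pas la r\'eduction homotopique ci-dessus, mais bien la construction explicite des r\'etractions par d\'eformation de normalisation et de rigidification au niveau des \DeuxFoncteursLax{} et des \DeuxTransformationsLax{} relatives aux objets, construction men\'ee \`a bien dans l'article cit\'e.
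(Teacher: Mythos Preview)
Your proposal is correct and is a careful elaboration of what the paper does in one line: the paper's proof consists entirely of the sentence ``C'est une reformulation d'une partie des r\'esultats de \cite{CCG}.'' You have spelled out the reformulation --- passing from the simplicial or bisimplicial statements of \cite{CCG} to the statements about the categories $\DeuxIntOp{\Delta}(-)$ via the last-vertex equivalence and the Thomason/Bousfield--Kan identification of the nerve of a Grothendieck construction with a diagonal --- and then invoked the bisimplicial lemma to reduce to the levelwise comparisons proved in \cite{CCG}.

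One small caveat: the precise form in which \cite{CCG} packages its results is slightly different from your levelwise reduction (they compare the various nerves globally as simplicial sets or via their classifying spaces, and exhibit explicit simplicial homotopy equivalences), so strictly speaking the three ``facts'' you isolate at the end are consequences of, rather than literally the statements of, their main theorems. This does not affect the validity of your plan, but it means the final appeal to \cite{CCG} is itself a mild reformulation. The paper sidesteps all of this by simply declaring the whole proposition a reformulation; your unpacking is genuinely more informative.
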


\begin{proof}
C'est une reformulation d'une partie des résultats de \cite{CCG}.
\end{proof}

\begin{rem}\label{Nini}
En particulier, pour tout morphisme $u : \mathdeuxcat{A} \to \mathdeuxcat{B}$ de $\DeuxCat$, il existe un diagramme commutatif dans $\Cat$ 
$$
\xymatrix{
\Delta / \NerfLaxNor \mathdeuxcat{A}
\ar[rr]^{\Delta / \NerfLaxNor (u)}
\ar[d]
&& 
\Delta / \NerfLaxNor \mathdeuxcat{B}
\ar[d]
\\
\Delta / \NerfLax \mathdeuxcat{A}
\ar[rr]_{\Delta / \NerfLax (u)}
&& 
\Delta / \NerfLax \mathdeuxcat{B}
}
$$
dont les flèches verticales sont des $\UnLocFondMin$\nobreakdash-équivalences faibles, donc des équivalences faibles pour tout \ClasseUnLocFond{}. En particulier, pour tout \ClasseUnLocFond{}, pour tout morphisme $u$ de $\DeuxCat$, le foncteur $\Delta / \NerfLax{(u)}$ est une équivalence faible si et seulement si le foncteur $\Delta / \NerfLaxNor{(u)}$ en est une. 
\end{rem}

\emph{Soit $\UnLocFond{W}$ un \ClasseUnLocFond{} fixé.}  

\begin{df}\label{DefDeuxW}
On notera $\DeuxLocFond{W}$ la classe des morphismes de $\DeuxCat$ dont l'image par le foncteur $\Delta / \NerfLax(\bullet)$ (ou, de façon équivalente en vertu de la remarque \ref{Nini}, l'image par le foncteur $\Delta / \NerfLaxNor(\bullet)$) est une équivalence faible.

On appellera les éléments de $\DeuxLocFond{W}$ des $\DeuxLocFond{W}$\nobreakdash-\emph{équivalences faibles}, ou plus simplement des \emph{équivalences faibles}. 
\end{df}

\begin{rem}\label{DeuxLocFondSat}
Par fonctorialité, la classe $\DeuxLocFond{W}$ est faiblement saturée.
\end{rem}

\begin{df}
On dira qu'une petite \deux{}catégorie $\mathdeuxcat{A}$ est $\DeuxLocFond{W}$\nobreakdash-\emph{asphérique}, ou plus simplement \emph{asphérique}, si le morphisme canonique $\mathdeuxcat{A} \to \DeuxCatPonct$ est une équivalence faible.
\end{df}


\begin{lemme}\label{Rutebeuf}
Une petite \deux{}catégorie $\mathdeuxcat{A}$ est $\DeuxLocFond{W}$\nobreakdash-asphérique si et seulement si la catégorie $\Delta/\NerfLaxNor\mathdeuxcat{A}$ est $\UnLocFond{W}$\nobreakdash-asphérique.  
\end{lemme}

\begin{proof}
C'est immédiat, la catégorie $\Delta$ étant $\UnLocFond{W}$\nobreakdash-asphérique (elle admet un objet final) et la classe $\UnLocFond{W}$ vérifiant la propriété de $2$ sur $3$. 
\end{proof}

%

\begin{rem}\label{EndoConstantW}
Rappelons qu'un endofoncteur $u : A \to A$ est dit \emph{constant} s'il existe un foncteur $a : e \to A$ tel que $u = ap_{A}$. Autrement dit, $u$ se factorise par la catégorie ponctuelle. En vertu de la saturation faible des localisateurs fondamentaux, une petite catégorie admettant un endofoncteur constant qui est une équivalence faible est asphérique.
\end{rem}

\begin{lemme}\label{OFAspherique1}
Une petite \deux{}catégorie admettant un objet admettant un objet final est asphérique.
\end{lemme}

\begin{proof}
Soit $\mathdeuxcat{A}$ une petite \deux{}catégorie admettant un objet $z$ tel que, pour tout objet $a$ de $\mathdeuxcat{A}$, la catégorie $\CatHom{\mathdeuxcat{A}}{a}{z}$ admette un objet final. Pour montrer le résultat désiré, il suffit, en vertu du lemme \ref{Rutebeuf}, de vérifier que la catégorie $\Delta/\NerfLaxNor \mathdeuxcat{A}$ est asphérique. 

Pour tout objet $a$ de $\mathdeuxcat{A}$, on notera $p_{a} : a \to z$ l'objet final de $\CatHom{\mathdeuxcat{A}}{a}{z}$ et, pour toute \un{}cellule $f : a \to z$, on notera $\varphi_{f} : f \Rightarrow p_{a}$ l'unique \deux{}cellule de $f$ vers $p_{a}$ dans $\mathdeuxcat{A}$. Pour tout \DeuxFoncteurLax{} normalisé $x : [m] \to \mathdeuxcat{A}$, on définit un \DeuxFoncteurLax{} normalisé $D(x) : [m+1] \to \mathdeuxcat{A}$ comme suit. Pour tout objet $i$ de $[m]$, $D(x)_{i} = x_{i}$ ; de plus, $D(x)_{m+1} = z$. Pour tout couple d'entiers $0 \leq i \leq j \leq m$, $D(x)_{j,i} = x_{j,i}$ ; de plus, $D(x)_{m+1, i} = p_{x_{i}}$ pour tout objet $i$ de $[m]$. Pour tout triplet d'entiers $0 \leq i \leq j \leq k \leq m$, $D(x)_{k,j,i} = x_{k,j,i}$ ; de plus, $D(x)_{m+1, j, i} = \varphi_{p_{x_{j}} \CompDeuxZero D(x)_{j,i}}$ pour tout couple d'entiers $0 \leq i \leq j \leq m$. Pour tout morphisme simplicial $\varphi : [m] \to [n]$, on définit un morphisme simplicial $D(\varphi) : [m+1] \to [n+1]$ par $D(\varphi)(i) = \varphi(i)$ si $i \leq m$ et $D(\varphi)(m+1) = n+1$. Cela permet de définir un endofoncteur
$$
\begin{aligned}
D : \Delta/\NerfLaxNor{\mathdeuxcat{A}} &\to \Delta/\NerfLaxNor{\mathdeuxcat{A}} 
\\
([m], x : [m] \to \mathdeuxcat{A}) &\mapsto ([m+1], D(x) : [m+1] \to \mathdeuxcat{A})
\\
\varphi &\mapsto D(\varphi).
\end{aligned}
$$
Considérons en outre l'endofoncteur constant
$$
\begin{aligned}
Z : \Delta/\NerfLaxNor{\mathdeuxcat{A}} &\to \Delta/\NerfLaxNor{\mathdeuxcat{A}} 
\\
([m], x : [m] \to \mathdeuxcat{A}) &\mapsto ([0], z)
\\
\varphi &\mapsto 1_{[0]}.
\end{aligned}
$$ 
Pour tout objet $([m], x)$ de $\Delta/\NerfLaxNor{\mathdeuxcat{A}}$, posons 
$$
\begin{aligned}
\iota_{([m], x)} : [m] &\to [m+1]
\\
i &\mapsto i
\end{aligned}
$$
et 
$$
\begin{aligned}
\omega_{([m], x)} : [0] &\to [m+1]
\\
0 &\mapsto m+1.
\end{aligned}
$$
Cela définit des transformations naturelles $\iota : 1_{\Delta/\NerfLaxNor{\mathdeuxcat{A}}} \Rightarrow D$ et $\omega : Z \Rightarrow D$. Il en résulte que $Z$ est une équivalence faible. Comme c'est un endofoncteur constant de $\Delta/\NerfLaxNor{\mathdeuxcat{A}}$, cette catégorie est asphérique (voir la remarque \ref{EndoConstantW}). 
\end{proof}


\begin{theo}\label{ThABC}
Soit
$$
\xymatrix{
\mathdeuxcat{A} 
\ar[rr]^{u}
\ar[dr]_{w}
&&\mathdeuxcat{B}
\ar[dl]^{v}
\\
&\mathdeuxcat{C}
}
$$
un triangle commutatif dans $\DeuxCat$. Supposons que, pour tout objet $c$ de $\mathdeuxcat{C}$, le \DeuxFoncteurStrict{} induit par ces données
$$
\DeuxFoncTrancheCoLax{u}{c} : \TrancheCoLax{\mathdeuxcat{A}}{w}{c} \to \TrancheCoLax{\mathdeuxcat{B}}{v}{c}
$$
soit une équivalence faible. Alors, $u$ est une équivalence faible.
\end{theo}

\begin{proof}
En utilisant les propositions \ref{LemmeBisimplicial} et \ref{DeltaTotalementAspherique}, le résultat se démontre de façon tout à fait analogue à \cite[théorème 2.32]{ArticleThAMoi}.
\end{proof}

\begin{rem}
Il est bien entendu possible de remplacer le \DeuxFoncteurStrict{} $\DeuxFoncTrancheCoLax{u}{c}$ par $\DeuxFoncTrancheLax{u}{c}$, $\DeuxFoncOpTrancheCoLax{u}{c}$ ou $\DeuxFoncOpTrancheLax{u}{c}$ dans l'énoncé du théorème \ref{ThABC}. On démontrera plus loin ces variantes, ainsi que des généralisations d'icelles, dans un cadre plus conceptuel.
\end{rem}

\section{Localisateurs fondamentaux de $\tDeuxCat$}

\begin{df}\label{def:loc_fond_dCat}
Un \emph{\ClasseDeuxLocFond{}} est une classe $\DeuxLocFond{W}$ de morphismes de $\DeuxCat$ vérifiant les propriétés suivantes. 
\begin{itemize}
\item[LF1] La classe $\DeuxLocFond{W}$ est faiblement saturée.

\item[LF2] Si une petite \deux{}catégorie $\mathdeuxcat{A}$ admet un objet admettant un objet final, alors le morphisme canonique $\mathdeuxcat{A} \to \DeuxCatPonct{}$ est dans $\DeuxLocFond{W}$.

\item[LF3] Si 
$$
\xymatrix{
\mathdeuxcat{A} 
\ar[rr]^{u}
\ar[dr]_{w}
&&\mathdeuxcat{B}
\ar[dl]^{v}
\\
&\mathdeuxcat{C}
}
$$
désigne un triangle commutatif dans $\DeuxCat$ et si, pour tout objet $c$ de $\mathcal{C}$, le \DeuxFoncteurStrict{} induit par ces données
$$
\DeuxFoncTrancheCoLax{u}{c} : \TrancheCoLax{\mathdeuxcat{A}}{w}{c} \to \TrancheCoLax{\mathdeuxcat{B}}{v}{c} 
$$
est dans $\DeuxLocFond{W}$, alors $u$ est dans $\DeuxLocFond{W}$.
\end{itemize}
\end{df}

\begin{rem}\label{Levet}
Si $\UnLocFond{W}$ est un \ClasseUnLocFond{}, alors la classe $\DeuxLocFond{W} = \NerfLaxNor^{-1} (i_{\Delta}^{-1} (W))$ est un \ClasseDeuxLocFond{} en vertu de la remarque \ref{DeuxLocFondSat}, du lemme \ref{OFAspherique1} et du théorème \ref{ThABC}. En particulier, $\DeuxLocFondMin = \NerfLaxNor^{-1} (i_{\Delta}^{-1} (\UnLocFondMin)) = \NerfLaxNor^{-1} (\EquiQuillen)$ est un \ClasseDeuxLocFond{}.
\end{rem}

Dans la suite, on commettra souvent l'abus de considérer $\Cat$ comme une sous-catégorie (pleine) de $\DeuxCat$.

\begin{rem}\label{Proust}
Si $\DeuxLocFond{W}$ est un \ClasseDeuxLocFond{}, alors $\DeuxLocFond{W} \cap \UnCell{\Cat}$ est un \ClasseUnLocFond{}. 
\end{rem}

L'objectif principal de cet article consiste à montrer que \emph{tous} les \ClassesUnLocFond{} et \emph{tous} les \ClasseDeuxLocFondS{} s'obtiennent les premiers à partir des seconds et les seconds à partir des premiers par les opérations figurant dans les énoncés des remarques \ref{Levet} et \ref{Proust} respectivement, que ces deux opérations sont inverses l'une de l'autre et qu'elles induisent des équivalences de catégories entre les catégories homotopiques associées de $\Cat$ et de $\DeuxCat$.

\medbreak

\emph{On suppose fixé un localisateur fondamental $\DeuxLocFond{W}$ de $\DeuxCat$.}

\medskip

On appellera les éléments de $\DeuxLocFond{W}$ des $\DeuxLocFond{W}\emph{-équivalences}$ ou, plus simplement, si cela n'introduit aucune ambiguïté, des \emph{équivalences faibles}. 

\begin{df}
On dira qu'une petite \deux{}catégorie $\mathdeuxcat{A}$ est \emph{$\DeuxLocFond{W}$-asphérique}, ou plus simplement \emph{asphérique}, si le morphisme canonique $\mathdeuxcat{A} \to e$ est dans $\DeuxLocFond{W}$. On dira qu'un \DeuxFoncteurStrict{} $u : \mathdeuxcat{A} \to \mathdeuxcat{B}$ est \emph{$\DeuxLocFond{W}$-colax-asphérique}, ou plus simplement \emph{colax-asphérique} si, pour tout objet $b$ de $\mathdeuxcat{B}$, la \deux{}catégorie $\TrancheCoLax{\mathdeuxcat{A}}{u}{b}$ est asphérique. 
\end{df}

\begin{exemple}\label{CommaCoLaxAspherique}
Pour toute petite \deux{}catégorie $\mathdeuxcat{A}$ et tout objet $a$ de $\mathdeuxcat{A}$, la \deux{}catégorie $\TrancheCoLax{\mathdeuxcat{A}}{}{a}$ est asphérique, en vertu de l'exemple \ref{ExemplesOF} et de l'axiome LF2. La condition LF3 et un argument de « 2 sur 3 » permettent donc d'affirmer qu'un \DeuxFoncteurStrict{} colax-asphérique est une équivalence faible. 
\end{exemple}

\begin{rem}
La condition LF2 de la définition \ref{def:loc_fond_dCat} est équivalente, modulo LF1 et LF3, à la suivante : pour toute petite \deux{}catégorie $\mathdeuxcat{A}$ et tout objet $a$ de $\mathdeuxcat{A}$, le morphisme canonique $\TrancheCoLax{\mathdeuxcat{A}}{}{a} \to \DeuxCatPonct$ est dans $\DeuxLocFond{W}$. L'énoncé analogue pour les localisateurs fondamentaux de $\Cat$ est évident puisque, si une petite catégorie $A$ admet un objet final $z$, les catégories $A$ et $A/z$ sont canoniquement isomorphes. Le cas de $\DeuxCat$ est un peu plus subtil et nous le laissons en exercice (instructif, même si nous ne l'utiliserons pas).
\end{rem}

\begin{lemme}\label{Reynaldo}
Une $\UnLocFondMin$\nobreakdash-équivalence faible est une $\DeuxLocFond{W}$\nobreakdash-équivalence faible. Une petite catégorie  $\UnLocFondMin$\nobreakdash-asphérique est $\DeuxLocFond{W}$\nobreakdash-asphérique.
\end{lemme}

\begin{proof}
La seconde assertion est conséquence de la première, qui résulte du théorème \ref{CisinskiGrothendieck} et de la remarque \ref{Proust}.
\end{proof}

Le lemme \ref{LemmeDeGeorges} est immédiat. 

\begin{lemme}\label{LemmeDeGeorges}
Soient $\mathdeuxcat{A}$ une petite \deux{}catégorie, $z$ un objet de $\mathdeuxcat{A}$ et $q : \DeuxCatPonct \to \mathdeuxcat{A}$ le morphisme de $\DeuxCat$ défini par l'objet $z$. Alors, pour tout objet $x$ de $\mathdeuxcat{A}$, $\TrancheCoLax{\DeuxCatPonct}{q}{x}$ est la \deux{}catégorie associée à la catégorie $\DeuxCatUnOp{(\CatHom{\mathdeuxcat{A}}{z}{x})}$.
\end{lemme}

\begin{lemme}\label{OFAspherique2}
Une petite \deux{}catégorie op-admettant un objet admettant un objet initial est asphérique.  
\end{lemme}

\begin{proof}
Soient $\mathdeuxcat{A}$ une petite \deux{}catégorie op-admettant un objet admettant un objet initial et soit $z$ un objet de $\mathdeuxcat{A}$ tel que, pour tout objet $x$ de $\mathdeuxcat{A}$, la catégorie $\CatHom{\mathdeuxcat{A}}{z}{x}$ admette un objet initial. Considérons les \deux{}foncteurs $p_{\mathdeuxcat{A}} : \mathdeuxcat{A} \to \DeuxCatPonct$ et $q_{\mathdeuxcat{A}} : \DeuxCatPonct \to \mathdeuxcat{A}$, le second étant défini par $q_\mathdeuxcat{A}(*)= z$. En vertu du lemme \ref{LemmeDeGeorges}, pour tout objet $x$ de $\mathdeuxcat{A}$, la \deux{}catégorie $\TrancheCoLax{\DeuxCatPonct}{q_\mathdeuxcat{A}}{x}$ admet un objet admettant un objet final (elle admet également un objet admettant un objet initial). On en déduit que $\TrancheCoLax{\DeuxCatPonct}{q_\mathdeuxcat{A}}{x}$ est asphérique pour tout objet $x$ de $\mathdeuxcat{A}$. Par conséquent, $q_{\mathdeuxcat{A}}$ une équivalence faible. Comme $p_{\mathdeuxcat{A}} q_{\mathdeuxcat{A}} = 1_{\DeuxCatPonct}$, il résulte de la saturation faible de $\DeuxLocFond{W}$ que $p_{\mathdeuxcat{A}}$ est une équivalence faible. Par définition, $\mathdeuxcat{A}$ est donc asphérique.
\end{proof}

\begin{lemme}\label{OFAspherique2Bis}
Une petite \deux{}catégorie op-admettant un objet admettant un objet final est asphérique.  
\end{lemme}

\begin{proof}
Soit $\mathdeuxcat{A}$ une petite \deux{}catégorie op-admettant un objet admettant un objet final et soit $z$ un objet de $\mathdeuxcat{A}$ tel que, pour tout objet $x$ de $\mathdeuxcat{A}$, la catégorie $\CatHom{\mathdeuxcat{A}}{z}{x}$ admette un objet final. Considérons les \deux{}foncteurs $p_{\mathdeuxcat{A}} : \mathdeuxcat{A} \to \DeuxCatPonct$ et $q_{\mathdeuxcat{A}} : \DeuxCatPonct \to \mathdeuxcat{A}$, le second étant défini par $q_\mathdeuxcat{A}(*)= z$. En vertu du lemme \ref{LemmeDeGeorges}, pour tout objet $x$ de $\mathdeuxcat{A}$, la \deux{}catégorie $\TrancheCoLax{\DeuxCatPonct}{q_\mathdeuxcat{A}}{x}$ op-admet un objet admettant un objet initial (elle op-admet également un objet admettant un objet final).  On en déduit que $\TrancheCoLax{\DeuxCatPonct}{q_\mathdeuxcat{A}}{x}$ est asphérique pour tout objet $x$ de $\mathdeuxcat{A}$, en vertu du lemme \ref{OFAspherique2}. Par conséquent, $q_{\mathdeuxcat{A}}$ est une équivalence faible. La saturation faible de $\DeuxLocFond{W}$ permet d'en conclure que $\mathdeuxcat{A}$ est asphérique. 
\end{proof}

\begin{corollaire}\label{OpTrancheLaxOpTrancheCoLaxAspheriques}
Pour toute petite \deux{}catégorie $\mathdeuxcat{A}$ et tout objet $a$ de $\mathdeuxcat{A}$, les \deux{}catégories $\OpTrancheLax{\mathdeuxcat{A}}{}{a}$ et $\OpTrancheCoLax{\mathdeuxcat{A}}{}{a}$ sont asphériques. 
\end{corollaire}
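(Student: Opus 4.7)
L'id�e est que ce corollaire est une cons�quence imm�diate de la combinaison de l'exemple \ref{ExemplesOF} avec les lemmes \ref{OFAspherique2} et \ref{OFAspherique2Bis}. Plus pr�cis�ment, l'exemple \ref{ExemplesOF} nous apprend que, pour toute petite \deux{}cat�gorie $\mathdeuxcat{A}$ et tout objet $a$ de $\mathdeuxcat{A}$, la \deux{}cat�gorie $\OpTrancheLax{\mathdeuxcat{A}}{}{a}$ op\nobreakdash-admet un objet admettant un objet initial, tandis que $\OpTrancheCoLax{\mathdeuxcat{A}}{}{a}$ op\nobreakdash-admet un objet admettant un objet final.

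Je conclurais alors en appliquant directement le lemme \ref{OFAspherique2} � la \deux{}cat�gorie $\OpTrancheLax{\mathdeuxcat{A}}{}{a}$ pour obtenir son asph�ricit�, et le lemme \ref{OFAspherique2Bis} � la \deux{}cat�gorie $\OpTrancheCoLax{\mathdeuxcat{A}}{}{a}$ pour obtenir la sienne. Il n'y a donc aucune difficult� technique � attendre~: tout le travail a �t� effectu� dans l'�tablissement des lemmes \ref{OFAspherique2} et \ref{OFAspherique2Bis}, le corollaire n'�tant qu'une application directe dans le cas particulier des \deux{}cat�gories tranches canoniques.
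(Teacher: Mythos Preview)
Your proposal is correct and follows exactly the same approach as the paper's own proof, which simply invokes the exemple \ref{ExemplesOF} together with the lemmes \ref{OFAspherique2} and \ref{OFAspherique2Bis}. There is nothing to add.
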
 

\begin{proof}
En vertu de l'exemple \ref{ExemplesOF}, cela résulte des lemmes \ref{OFAspherique2} et \ref{OFAspherique2Bis}. 
\end{proof}

\begin{lemme}\label{PreadjointW}
Un préadjoint à gauche colax est une équivalence faible.
\end{lemme}

\begin{proof}
En vertu des définitions, cela résulte des conditions LF2 et LF3.
\end{proof}


\begin{prop}\label{PrefibrationIntegraleW}
Soient $\mathdeuxcat{A}$ une petite \deux{}catégorie et $F : \DeuxCatDeuxOp{\mathdeuxcat{A}} \to \DeuxCatDeuxCat$ un \DeuxFoncteurStrict{} tel que, pour tout objet $a$ de $\mathdeuxcat{A}$, la \deux{}catégorie $F(a)$ soit asphérique. Alors, la projection canonique
$$
P_{F} : \DeuxIntCo{\mathdeuxcat{A}} F \to \mathdeuxcat{A}
$$
est colax-asphérique (donc en particulier une équivalence faible). 
\end{prop}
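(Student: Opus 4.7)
The plan is to fix an object $b$ of $\mathdeuxcat{A}$ and prove directly that the 2-category $\TrancheCoLax{\DeuxIntCo{\mathdeuxcat{A}}F}{P_F}{b}$ is $\DeuxLocFond{W}$-aspherique, which will give the conclusion via the definition of colax-asphericite. The fiber $\Fibre{\DeuxIntCo{\mathdeuxcat{A}}F}{P_F}{b}$ is canonically identified with the 2-category $F(b)$, aspherique by hypothesis, and there is a canonical strict inclusion $J_b : F(b) \hookrightarrow \TrancheCoLax{\DeuxIntCo{\mathdeuxcat{A}}F}{P_F}{b}$ sending an object $x$ to the pair $((b,x), 1_b)$ and extending in the evident way on 1-cells and 2-cells.

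The crucial step is to construct a strict retraction $K_b : \TrancheCoLax{\DeuxIntCo{\mathdeuxcat{A}}F}{P_F}{b} \to F(b)$ of $J_b$ that is a preadjoint a gauche colax. The idea is to obtain $K_b$ directly as the $(?)^{\co}$-dual of the retraction $K_a$ constructed explicitly in paragraph \ref{JaKa}. Using the formula $\DeuxIntCo{\mathdeuxcat{A}}F = \DeuxCatDeuxOp{(\DeuxInt{\DeuxCatDeuxOp{\mathdeuxcat{A}}}(\DeuxFoncDeuxOp{?} \circ F))}$, one views $P_F$ as the $(?)^{\co}$-dual of the projection $P_G : \DeuxInt{\DeuxCatDeuxOp{\mathdeuxcat{A}}}G \to \DeuxCatDeuxOp{\mathdeuxcat{A}}$ with $G = \DeuxFoncDeuxOp{?} \circ F$. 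Then $\TrancheCoLax{\DeuxIntCo{\mathdeuxcat{A}}F}{P_F}{b}$ identifies with the $(?)^{\co}$-dual of $\TrancheLax{\DeuxInt{\DeuxCatDeuxOp{\mathdeuxcat{A}}}G}{P_G}{b}$, and the retraction from \ref{JaKa} for $P_G$ transports under $(?)^{\co}$-dualisation to a strict retraction $K_b$ of $J_b$. As affirmed by the remark at the end of \ref{JaKa}, this dualisation is legitimate; concretely one uses that $(?)^{\co}$-dualisation exchanges the roles of initial and final objects (hence of \emph{objet admettant un objet initial} and \emph{objet admettant un objet final}) and exchanges lax slices with colax slices, so that the property of $K_a$ being a preadjoint a gauche lax becomes precisely the property of $K_b$ being a preadjoint a gauche colax.

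Once $K_b$ is known to be a preadjoint a gauche colax, Lemma \ref{PreadjointW} asserts that it is a $\DeuxLocFond{W}$-equivalence faible. Combined with the hypothesis that $F(b)$ is aspherique and the weak saturation of $\DeuxLocFond{W}$ (axiom LF1), the asphericite of $\TrancheCoLax{\DeuxIntCo{\mathdeuxcat{A}}F}{P_F}{b}$ follows by a ``2 sur 3'' argument applied to the composite $\TrancheCoLax{\DeuxIntCo{\mathdeuxcat{A}}F}{P_F}{b} \xrightarrow{K_b} F(b) \to e$. As $b$ was arbitrary, $P_F$ is colax-aspherique; the final remark of example \ref{CommaCoLaxAspherique} then says this implies $P_F$ is a $\DeuxLocFond{W}$-equivalence faible.

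The main technical obstacle lies in the bookkeeping of the $(?)^{\co}$-dualities rather than in any genuinely new construction: one must check that the explicit formulas defining $K_b$ (transported from those of $K_a$ in \ref{JaKa}) indeed yield a strict 2-functor, that it is a retraction of $J_b$, and that the preadjoint condition is preserved after dualisation in the precise form required by Definition \ref{DefMAdjoints}. None of this requires input beyond what is already spelled out in \ref{JaKa}; the substantive homotopical content is entirely absorbed into axioms LF1--LF3 via the reduction to Lemma \ref{PreadjointW}.
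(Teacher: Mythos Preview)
Your proposal is correct and follows essentially the same route as the paper's proof: identify the fiber with $F(b)$, invoke the retraction $K_b$ obtained by $(?)^{\co}$-dualising the map $K_a$ of paragraph~\ref{JaKa} to get a pr\'eadjoint \`a gauche colax from the colax slice to the fiber, apply Lemma~\ref{PreadjointW}, and conclude by two-out-of-three. The only difference is that the paper simply cites paragraph~\ref{JaKa} (and its closing remark on the dual constructions) for the existence and pr\'eadjoint property of $K_b$, whereas you spell out explicitly how the $(?)^{\co}$-duality carries the lax slice and the pr\'eadjoint \`a gauche lax property over to the colax slice and the pr\'eadjoint \`a gauche colax property.
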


\begin{proof}
La fibre $\Fibre{(\DeuxIntCo{\mathdeuxcat{A}} F)}{P_{F}}{a}$ de $P_{F}$ au-dessus de $a$ s'identifiant à $F(a)$, elle est asphérique en vertu des hypothèses. On sait de plus (voir le paragraphe \ref{JaKa}) qu'il existe un \DeuxFoncteurStrict{} 
$$
K_{a} : \TrancheCoLax{\left(\DeuxIntCo{\mathdeuxcat{A}} F\right)}{P_{F}}{a} \to \Fibre{\left(\DeuxIntCo{\mathdeuxcat{A}} F\right)}{P_{F}}{a}
$$  
qui est un préadjoint à gauche colax, donc une équivalence faible en vertu du lemme \ref{PreadjointW}. Par conséquent, la \deux{}catégorie $\TrancheCoLax{(\DeuxIntCo{\mathdeuxcat{A}} F)}{P_{F}}{a}$ est asphérique, ce qui permet de conclure (voir l'exemple \ref{CommaCoLaxAspherique}). 
\end{proof}

\begin{paragr}
Soit $\mathdeuxcat{A}$ une petite \deux{}catégorie. On définit une \deux{}catégorie $S_{1}(\mathdeuxcat{A})$ comme suit :
$$
\Objets{S_{1}(\mathdeuxcat{A})} = \UnCell{\mathdeuxcat{A}}.
$$
Si $k : b \to a$ et $k' : b' \to a'$ sont deux \un{}cellules de $\mathdeuxcat{A}$, une \un{}cellule de $k$ vers $k'$ dans $S_{1}(\mathdeuxcat{A})$ est un triplet 
$$
(f : b' \to b, g : a \to a', \alpha : k' \Rightarrow gkf),
$$
ce que représente le diagramme
$$
\xymatrix{
b 
\ar[r]^{k}
&a
\ar[d]^{g}
\\
b'
\ar[u]^{f}
\ar[r]_{k'}
\ar@{}[ur] | (.54) {\big\Uparrow \alpha}
&a'
&. 
}
$$
Si $(f,g,\alpha)$ et $(f',g',\alpha')$ sont deux \un{}cellules parallèles de $k$ vers $k'$ dans $S_{1}(\mathdeuxcat{A})$, les \deux{}cellules de $(f,g,\alpha)$ vers $(f',g',\alpha')$ dans $S_{1}(\mathdeuxcat{A})$ sont les couples 
$$
(\varphi : f \Rightarrow f', \gamma : g \Rightarrow g')
$$ 
tels que 
$$
(\gamma \CompDeuxZero k \CompDeuxZero \varphi) \alpha = \alpha'.
$$

Les diverses compositions et identités de $S_{1}(\mathdeuxcat{A})$ se définissent de façon « évidente » à partir de celles de $\mathdeuxcat{A}$.

On vérifie l'existence d'isomorphismes canoniques 
$$
S_{1}(\mathdeuxcat{A}) \simeq  {\DeuxIntCo{\DeuxCatUnOp{\mathdeuxcat{A}}}} (\OpTrancheLax{\mathdeuxcat{A}}{}{a})  \simeq 
{\DeuxIntCo{\mathdeuxcat{A}}} \DeuxCatUnOp{(\TrancheLax{\mathdeuxcat{A}}{}{a})}.
$$
Il existe un diagramme de projections canoniques
$$
\xymatrix{
\DeuxCatUnOp{\mathdeuxcat{A}}
&S_{1}(\mathdeuxcat{A})
\ar[l]_{s_{1}^{\mathdeuxcat{A}}}
\ar[r]^{t_{1}^{\mathdeuxcat{A}}}
&\mathdeuxcat{A}.
}
$$
La fibre de la flèche de gauche au-dessus de $a \in \Objets{\mathdeuxcat{A}}$ s'identifie à $\OpTrancheLax{\mathdeuxcat{A}}{}{a}$, donc est asphérique en vertu du corollaire \ref{OpTrancheLaxOpTrancheCoLaxAspheriques} ; la fibre de la flèche de droite au-dessus de $a \in \Objets{\mathdeuxcat{A}}$ s'identifie à $\DeuxCatUnOp{(\TrancheLax{\mathdeuxcat{A}}{}{a})}$, donc est asphérique en vertu du lemme \ref{OFAspherique2}, puisqu'elle op-admet un objet admettant un objet initial. En particulier, en vertu de la proposition \ref{PrefibrationIntegraleW}, ces deux flèches sont des équivalences faibles. 
\end{paragr}

\begin{paragr}
Pour tout morphisme $u : \mathdeuxcat{A} \to \mathdeuxcat{B}{}$ de $\DeuxCat$, on en définit un autre par 
$$
\begin{aligned}
S_{1}(u) : S_{1}(\mathdeuxcat{A}) &\to S_{1}(\mathdeuxcat{B}{})
\\
k &\mapsto u(k)
\\
(f,g,\alpha) &\mapsto (u(f), u(g), u(\alpha))
\\
(\varphi, \gamma) &\mapsto (u(\varphi), u(\gamma)).
\end{aligned}
$$

Cette définition rend le diagramme suivant commutatif :
$$
\xymatrix{
\DeuxCatUnOp{\mathdeuxcat{A}}
\ar[d]_{\DeuxFoncUnOp{u}}
&S_{1}(\mathdeuxcat{A})
\ar[l]_{s_{1}^{\mathdeuxcat{A}}}
\ar[d]^{S_{1}(u)}
\ar[r]^{t_{1}^{\mathdeuxcat{A}}}
&\mathdeuxcat{A}
\ar[d]^{u}
\\
\DeuxCatUnOp{\mathdeuxcat{B}}
&S_{1}(\mathdeuxcat{B}{})
\ar[l]^{s_{1}^{\mathdeuxcat{B}{}}}
\ar[r]_{t_{1}^{\mathdeuxcat{B}{}}}
&\mathdeuxcat{B}{}
&.
}
$$

Comme les flèches horizontales de ce diagramme sont toutes des équivalences faibles, en vertu de ce qui précède, on déduit la proposition \ref{DeuxFoncUnOpW} de deux applications consécutives d'un argument de « $2$ sur $3$ ». 
\end{paragr}

\begin{prop}\label{DeuxFoncUnOpW}
Un morphisme $u$ de $\DeuxCat$ est une équivalence faible si et seulement si $\DeuxFoncUnOp{u}$ est une équivalence faible.
\end{prop}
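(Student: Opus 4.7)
The plan is to exploit directly the commutative diagram built just above the statement, whose horizontal arrows have all been shown to be weak equivalences. The argument consists in two successive applications of the property of ``$2$ sur $3$'', which is available since $\DeuxLocFond{W}$ is faiblement satur�e by axiom LF1.

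More precisely, I would argue as follows. Suppose first that $u : \mathdeuxcat{A} \to \mathdeuxcat{B}$ is une �quivalence faible. Consider the right-hand square of the diagram recalled in the paragraph preceding the statement. The two horizontal morphisms $t_{1}^{\mathdeuxcat{A}}$ and $t_{1}^{\mathdeuxcat{B}}$ are des �quivalences faibles, and $u \circ t_{1}^{\mathdeuxcat{A}} = t_{1}^{\mathdeuxcat{B}} \circ S_{1}(u)$, so $u \circ t_{1}^{\mathdeuxcat{A}}$ is une �quivalence faible (FS2), and hence $S_{1}(u)$ is une �quivalence faible (still by FS2). Now, applying FS2 to the left-hand square, in which $s_{1}^{\mathdeuxcat{A}}$ and $s_{1}^{\mathdeuxcat{B}}$ are des �quivalences faibles and $\DeuxFoncUnOp{u} \circ s_{1}^{\mathdeuxcat{A}} = s_{1}^{\mathdeuxcat{B}} \circ S_{1}(u)$ is une �quivalence faible, one concludes that $\DeuxFoncUnOp{u}$ is une �quivalence faible.

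Conversely, suppose that $\DeuxFoncUnOp{u}$ is une �quivalence faible. Applying FS2 to the left-hand square, one obtains that $S_{1}(u)$ is une �quivalence faible ; applying FS2 again to the right-hand square, one obtains that $u$ is une �quivalence faible.

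There is no real obstacle here: all the substantive content � namely the asph�ricit� of the various fibres of $s_{1}^{\mathdeuxcat{A}}$ and $t_{1}^{\mathdeuxcat{A}}$ together with proposition \ref{PrefibrationIntegraleW}, which together give that the horizontal arrows of the diagram are des �quivalences faibles � has been dispatched in the preceding paragraphs. The only remaining step is the formal double ``$2$ sur $3$'' argument above, which is why the proposition can be stated with essentially nothing more to prove.
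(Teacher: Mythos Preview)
Your argument is correct and is exactly the one given in the paper: the proposition is deduced from the commutative diagram of the preceding paragraph by two successive applications of ``$2$ sur $3$'', the horizontal arrows having already been shown to be \'equivalences faibles.
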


\begin{corollaire}\label{DeuxCatUnOpAsph}
Une petite \deux{}catégorie $\mathdeuxcat{A}$ est asphérique si et seulement si $\DeuxCatUnOp{\mathdeuxcat{A}}$ l'est. 
\end{corollaire}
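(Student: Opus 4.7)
Le plan consiste � d�duire l'�nonc� directement de la proposition \ref{DeuxFoncUnOpW}, qui vient d'�tre �tablie, en appliquant celle-ci au \DeuxFoncteurStrict{} canonique vers la \deux{}cat�gorie ponctuelle. L'observation cl� est que la \deux{}cat�gorie $\DeuxCatPonct$ est invariante par passage � l'oppos� \un{}cat�gorique, autrement dit $\DeuxCatUnOp{\DeuxCatPonct} = \DeuxCatPonct$ (puisqu'elle ne poss�de qu'une seule \un{}cellule, � savoir l'identit� de son unique objet), et que l'op�ration $\DeuxFoncUnOp{(-)}$ transporte le morphisme canonique $p_{\mathdeuxcat{A}} : \mathdeuxcat{A} \to \DeuxCatPonct$ sur le morphisme canonique $p_{\DeuxCatUnOp{\mathdeuxcat{A}}} : \DeuxCatUnOp{\mathdeuxcat{A}} \to \DeuxCatPonct$, par unicit� de ce dernier.

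Concr�tement, je proc�derais comme suit. Par d�finition, la petite \deux{}cat�gorie $\mathdeuxcat{A}$ est $\DeuxLocFond{W}$\nobreakdash-asph�rique si et seulement si le morphisme canonique $p_{\mathdeuxcat{A}} : \mathdeuxcat{A} \to \DeuxCatPonct$ appartient � $\DeuxLocFond{W}$. D'apr�s la proposition \ref{DeuxFoncUnOpW}, cette condition �quivaut � l'appartenance de $\DeuxFoncUnOp{p_{\mathdeuxcat{A}}}$ � $\DeuxLocFond{W}$. Compte tenu de l'identification $\DeuxFoncUnOp{p_{\mathdeuxcat{A}}} = p_{\DeuxCatUnOp{\mathdeuxcat{A}}}$ not�e ci-dessus, cette derni�re condition signifie exactement, toujours par d�finition, que $\DeuxCatUnOp{\mathdeuxcat{A}}$ est asph�rique, ce qui �tablit l'�quivalence recherch�e.

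Aucun obstacle s�rieux n'est attendu : la difficult� substantielle r�side enti�rement dans la proposition \ref{DeuxFoncUnOpW}, dont le pr�sent corollaire n'est qu'une transcription imm�diate au cas particulier du morphisme vers l'objet final de $\DeuxCat$. Le seul point � v�rifier avec soin est l'invariance de $\DeuxCatPonct$ sous $\DeuxFoncUnOp{(-)}$, qui est manifeste.
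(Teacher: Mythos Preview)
Your proof is correct and follows exactly the approach the paper intends: the corollary is stated immediately after proposition~\ref{DeuxFoncUnOpW} without proof, precisely because it is the specialization of that proposition to the canonical morphism $\mathdeuxcat{A} \to \DeuxCatPonct$, using the evident identification $\DeuxFoncUnOp{(p_{\mathdeuxcat{A}})} = p_{\DeuxCatUnOp{\mathdeuxcat{A}}}$.
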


\begin{corollaire}\label{OFAspherique3}
Une petite \deux{}catégorie admettant un objet admettant un objet initial est asphérique.
\end{corollaire}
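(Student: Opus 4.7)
The plan is to deduce this corollary directly from Lemma \ref{OFAspherique2} and Corollary \ref{DeuxCatUnOpAsph}, exploiting the $\op$-duality that has just been established. The key observation is that "admitting an object admitting an initial object" and "op-admitting an object admitting an initial object" become interchanged by passage to $\DeuxCatUnOp{\mathdeuxcat{A}}$: in hom-category terms, if $z$ is an object of $\mathdeuxcat{A}$, then $\CatHom{\DeuxCatUnOp{\mathdeuxcat{A}}}{a}{z} = \CatHom{\mathdeuxcat{A}}{z}{a}$ for every object $a$, so initial objects in hom-categories \emph{into} $z$ in $\mathdeuxcat{A}$ are the same as initial objects in hom-categories \emph{out of} $z$ in $\DeuxCatUnOp{\mathdeuxcat{A}}$.

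Concretely, I would proceed as follows. Let $\mathdeuxcat{A}$ be a small \deux{}cat�gorie admitting an object $z$ such that, for every object $a$ of $\mathdeuxcat{A}$, the category $\CatHom{\mathdeuxcat{A}}{a}{z}$ admits an initial object. First I observe, as above, that this hypothesis can be rephrased by saying that $\DeuxCatUnOp{\mathdeuxcat{A}}$ \emph{op-admits} an object admitting an initial object, in the precise sense of paragraph~\ref{DefOpAdmet}: indeed, $(\DeuxCatUnOp{\mathdeuxcat{A}})^{\op} = \mathdeuxcat{A}$ admits an object admitting an initial object, which is exactly what is required. Second, I apply Lemma~\ref{OFAspherique2} to the \deux{}cat�gorie $\DeuxCatUnOp{\mathdeuxcat{A}}$ to conclude that $\DeuxCatUnOp{\mathdeuxcat{A}}$ is $\DeuxLocFond{W}$-asph�rique. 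Third, I invoke Corollary~\ref{DeuxCatUnOpAsph} to transfer asphericity back from $\DeuxCatUnOp{\mathdeuxcat{A}}$ to $\mathdeuxcat{A}$.

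There is essentially no obstacle here: the whole content of the corollary has already been packaged into Lemma~\ref{OFAspherique2} (which itself rests on Lemma~\ref{LemmeDeGeorges} and the axioms LF1--LF3) and into the $\op$-invariance Corollary~\ref{DeuxCatUnOpAsph} (whose nontrivial input is Proposition~\ref{DeuxFoncUnOpW}, established via the $S_{1}$ construction). What remains is purely a matter of translating between the four variants of the notion of object admitting a final/initial object, and the proof amounts to two lines.
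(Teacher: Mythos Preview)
Your argument is correct and is exactly the intended one: the corollary is stated without proof in the paper precisely because it follows immediately from Lemma~\ref{OFAspherique2} applied to $\DeuxCatUnOp{\mathdeuxcat{A}}$ together with Corollary~\ref{DeuxCatUnOpAsph}, via the duality translation you spell out.
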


\begin{prop}\label{TheoremeAOpTrancheCoLax}
Soit 
$$
\xymatrix{
\mathdeuxcat{A} 
\ar[rr]^{u}
\ar[dr]_{w}
&&\mathdeuxcat{B}
\ar[dl]^{v}
\\
&\mathdeuxcat{C}
}
$$
un diagramme commutatif dans $\DeuxCat$. Supposons que, pour tout objet $c$ de $\mathdeuxcat{C}$, le morphisme de $\DeuxCat$ induit
$$
\DeuxFoncOpTrancheCoLax{u}{c} : \OpTrancheCoLax{\mathdeuxcat{A}}{w}{c} \to \OpTrancheCoLax{\mathdeuxcat{B}}{v}{c} 
$$
soit une équivalence faible. Alors $u$ est une équivalence faible.
\end{prop}

\begin{proof}
En vertu des hypothèses et par définition, le \DeuxFoncteurStrict{} $\DeuxFoncUnOp{(\DeuxFoncTrancheCoLax{\DeuxFoncUnOp{u}}{c})}$ est une équivalence faible pour tout objet $c$ de $\mathdeuxcat{C}$. Par conséquent, en vertu de la proposition \ref{DeuxFoncUnOpW}, le \DeuxFoncteurStrict{} $\DeuxFoncTrancheCoLax{\DeuxFoncUnOp{u}}{c}$ est une équivalence faible pour tout objet $c$ de $\mathdeuxcat{C}$. En vertu de la condition LF3, le \DeuxFoncteurStrict{} $\DeuxFoncUnOp{u}$ est donc une équivalence faible. Une nouvelle invocation de la proposition \ref{DeuxFoncUnOpW} permet de conclure.
\end{proof} 

\begin{corollaire}
Soit $u : \mathdeuxcat{A} \to \mathdeuxcat{B}$ un morphisme de $\DeuxCat$. Supposons que, pour tout objet $b$ de $\mathdeuxcat{B}$, la \deux{}catégorie $\OpTrancheCoLax{\mathdeuxcat{A}}{u}{b}$ soit asphérique. Alors $u$ est une équivalence faible.
\end{corollaire}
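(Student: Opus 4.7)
Le plan est d'appliquer la proposition \ref{TheoremeAOpTrancheCoLax} au triangle commutatif obtenu en prenant $\mathdeuxcat{C} = \mathdeuxcat{B}$, $w = u$ et $v = 1_{\mathdeuxcat{B}}$, c'est-�-dire
$$
\xymatrix{
\mathdeuxcat{A}
\ar[rr]^{u}
\ar[dr]_{u}
&&\mathdeuxcat{B}
\ar[dl]^{1_{\mathdeuxcat{B}}}
\\
&\mathdeuxcat{B}
&.
}
$$
Il suffit alors de v�rifier que, pour tout objet $b$ de $\mathdeuxcat{B}$, le \DeuxFoncteurStrict{} induit
$$
\DeuxFoncOpTrancheCoLax{u}{b} : \OpTrancheCoLax{\mathdeuxcat{A}}{u}{b} \to \OpTrancheCoLax{\mathdeuxcat{B}}{1_{\mathdeuxcat{B}}}{b} = \OpTrancheCoLax{\mathdeuxcat{B}}{}{b}
$$
est une �quivalence faible.

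Or, par hypoth�se, la source $\OpTrancheCoLax{\mathdeuxcat{A}}{u}{b}$ est asph�rique ; d'autre part, le but $\OpTrancheCoLax{\mathdeuxcat{B}}{}{b}$ est asph�rique en vertu du corollaire \ref{OpTrancheLaxOpTrancheCoLaxAspheriques}. Les deux morphismes canoniques vers la \deux{}cat�gorie ponctuelle $e$ sont donc des �quivalences faibles, et un argument de ��2 sur 3�� (d�coulant de la faible saturation de $\DeuxLocFond{W}$, condition LF1) appliqu� au triangle commutatif
$$
\xymatrix{
\OpTrancheCoLax{\mathdeuxcat{A}}{u}{b}
\ar[rr]^{\DeuxFoncOpTrancheCoLax{u}{b}}
\ar[dr]
&&
\OpTrancheCoLax{\mathdeuxcat{B}}{}{b}
\ar[dl]
\\
&
e
}
$$
permet de conclure que $\DeuxFoncOpTrancheCoLax{u}{b}$ est une �quivalence faible.

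La proposition \ref{TheoremeAOpTrancheCoLax} entra�ne alors imm�diatement que $u$ est une �quivalence faible. Il n'y a ici aucun obstacle significatif : toute la substance du r�sultat est contenue dans la proposition \ref{TheoremeAOpTrancheCoLax} ainsi que dans le corollaire \ref{OpTrancheLaxOpTrancheCoLaxAspheriques}, le pr�sent �nonc� n'en �tant qu'un corollaire formel.
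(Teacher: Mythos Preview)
Your proof is correct and is exactly the argument the paper has in mind: the corollary is stated without proof in the paper, but the intended reasoning is precisely to specialize proposition~\ref{TheoremeAOpTrancheCoLax} to $v = 1_{\mathdeuxcat{B}}$ and invoke a 2-sur-3 argument with corollaire~\ref{OpTrancheLaxOpTrancheCoLaxAspheriques}, in complete analogy with the colax case treated in l'exemple~\ref{CommaCoLaxAspherique} (and as summarized later in la remarque~\ref{Zozo}).
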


\begin{paragr}
Soit $\mathdeuxcat{A}$ une petite \deux{}catégorie. On définit une \deux{}catégorie $S_{2}(\mathdeuxcat{A})$ comme suit :
$$
\Objets{S_{2}(\mathdeuxcat{A})} = \UnCell{\mathdeuxcat{A}}.
$$
Si $k : b \to a$ et $k' : b' \to a'$ sont deux \un{}cellules de $\mathdeuxcat{A}$, une \un{}cellule de $k$ vers $k'$ dans $S_{2}(\mathdeuxcat{A})$ est un triplet 
$$
(f : b \to b', g : a \to a', \alpha : k' f \Rightarrow gk),
$$
ce que représente le diagramme
$$
\xymatrix{
b 
\ar[r]^{k}
\ar[d]_{f}
&a
\ar[d]^{g}
\\
b'
\ar[r]_{k'}
\ar@{}[ur] | (.54) {\big\Uparrow \alpha}
&a'
&.
}
$$
Si $(f,g,\alpha)$ et $(f',g',\alpha')$ sont deux \un{}cellules parallèles de $k$ vers $k'$ dans $S_{2}(\mathdeuxcat{A})$, les \deux{}cellules de $(f,g,\alpha)$ vers $(f',g',\alpha')$ dans $S_{2}(\mathdeuxcat{A})$ sont les couples 
$$
(\varphi : f' \Rightarrow f, \gamma : g \Rightarrow g')
$$ 
tels que 
$$
(\gamma \CompDeuxZero k) \CompDeuxUn \alpha \CompDeuxUn (k' \CompDeuxZero \varphi) = \alpha'.
$$

Les diverses compositions et identités de $S_{2}(\mathdeuxcat{A})$ se définissent de façon « évidente » à partir de celles de $\mathdeuxcat{A}$.

On vérifie l'existence d'isomorphismes canoniques 
$$
S_{2}(\mathdeuxcat{A}) \simeq \DeuxCatUnOp{\left({\DeuxIntCo{\DeuxCatToutOp{\mathdeuxcat{A}}}} \DeuxCatUnOp{(\OpTrancheLax{\mathdeuxcat{A}}{}{a})}\right)} \simeq
{\DeuxIntCo{\mathdeuxcat{A}}} \DeuxCatDeuxOp{(\TrancheCoLax{\mathdeuxcat{A}}{}{a})}.
$$
Il existe un diagramme de projections canoniques
$$
\xymatrix{
\DeuxCatDeuxOp{\mathdeuxcat{A}}
&S_{2}(\mathdeuxcat{A})
\ar[l]_{s_{2}^{\mathdeuxcat{A}}}
\ar[r]^{t_{2}^{\mathdeuxcat{A}}}
&\mathdeuxcat{A}.
}
$$
La fibre de la flèche de droite au-dessus de $a \in \Objets{\mathdeuxcat{A}}$ s'identifie à $\DeuxCatDeuxOp{(\TrancheCoLax{\mathdeuxcat{A}}{}{a})}$, donc est asphérique (elle admet un objet admettant un objet initial). En vertu de la proposition \ref{PrefibrationIntegraleW}, $t_{2}^{\mathdeuxcat{A}}$ est donc une équivalence faible. La fibre de la flèche de gauche au-dessus de $a \in \Objets{\mathdeuxcat{A}}$ s'identifie à $\OpTrancheLax{\mathdeuxcat{A}}{}{a}$, donc est asphérique. En vertu de la proposition \ref{PrefibrationIntegraleW}, la projection canonique
$$
\DeuxIntCo{\DeuxCatToutOp{\mathdeuxcat{A}}} 
\DeuxCatUnOp{(\OpTrancheLax{\mathdeuxcat{A}}{}{a})} \to \DeuxCatToutOp{\mathdeuxcat{A}}
$$ 
est une équivalence faible. Comme elle s'identifie à $\DeuxFoncUnOp{\left(s_{2}^{\mathdeuxcat{A}}\right)}$, $s_{2}^{\mathdeuxcat{A}}$ est une équivalence faible en vertu du corollaire \ref{DeuxFoncUnOpW}.
\end{paragr}

\begin{paragr}
Pour tout morphisme $u : \mathdeuxcat{A} \to \mathdeuxcat{B}{}$ de $\DeuxCat$, on en définit un autre par
$$
\begin{aligned}
S_{2}(u) : S_{2}(\mathdeuxcat{A}) &\to S_{2}(\mathdeuxcat{B}{})
\\
k &\mapsto u(k)
\\
(f,g,\alpha) &\mapsto (u(f), u(g), u(\alpha))
\\
(\varphi, \gamma) &\mapsto (u(\varphi), u(\gamma)).
\end{aligned}
$$

Cette définition rend le diagramme suivant commutatif :
$$
\xymatrix{
\DeuxCatDeuxOp{\mathdeuxcat{A}}
\ar[d]_{\DeuxFoncDeuxOp{u}}
&S_{2}(\mathdeuxcat{A})
\ar[l]_{s_{2}^{\mathdeuxcat{A}}}
\ar[d]^{S_{2}(u)}
\ar[r]^{t_{2}^{\mathdeuxcat{A}}}
&\mathdeuxcat{A}
\ar[d]^{u}
\\
\DeuxCatDeuxOp{\mathdeuxcat{B}}
&S_{2}(\mathdeuxcat{B}{})
\ar[l]^{s_{2}^{\mathdeuxcat{B}{}}}
\ar[r]_{t_{2}^{\mathdeuxcat{B}{}}}
&\mathdeuxcat{B}{}
&.
}
$$
\end{paragr}

Comme les flèches horizontales de ce diagramme sont toutes des équivalences faibles, en vertu de ce qui précède, on en déduit la proposition \ref{DeuxFoncDeuxOpW} par deux applications consécutives d'un argument de « $2$ sur $3$ ».

\begin{prop}\label{DeuxFoncDeuxOpW}
Un morphisme $u$ de $\DeuxCat$ est une équivalence faible si et seulement si $\DeuxFoncDeuxOp{u}$ en est une.
\end{prop}

\begin{corollaire}\label{DeuxCatDeuxOpAsph}
Une petite \deux{}catégorie $\mathdeuxcat{A}$ est asphérique si et seulement si $\DeuxCatDeuxOp{\mathdeuxcat{A}}$ est asphérique.
\end{corollaire}

\begin{prop}\label{DeuxFoncToutOpW}
Un morphisme $u$ de $\DeuxCat$ est une équivalence faible si et seulement si $\DeuxFoncToutOp{u}$ en est une.
\end{prop}

\begin{proof}
Cela résulte des propositions \ref{DeuxFoncUnOpW} et \ref{DeuxFoncDeuxOpW}.
\end{proof}

\begin{corollaire}\label{DeuxCatToutOpAsph}
Une petite \deux{}catégorie $\mathdeuxcat{A}$ est asphérique si et seulement si $\DeuxCatToutOp{\mathdeuxcat{A}}$ est asphérique.
\end{corollaire}

\begin{prop}\label{TheoremeATrancheLax}
Soit 
$$
\xymatrix{
\mathdeuxcat{A} 
\ar[rr]^{u}
\ar[dr]_{w}
&&\mathdeuxcat{B}
\ar[dl]^{v}
\\
&\mathdeuxcat{C}
}
$$
un diagramme commutatif dans $\DeuxCat$. Supposons que, pour tout objet $c$ de $\mathdeuxcat{C}$, le morphisme
$$
\DeuxFoncTrancheLax{u}{c} : \TrancheLax{\mathdeuxcat{A}}{w}{c} \to \TrancheLax{\mathdeuxcat{B}}{v}{c} 
$$
soit une équivalence faible. Alors $u$ est une équivalence faible.
\end{prop}

\begin{proof}
En vertu des hypothèses et par définition, le \DeuxFoncteurStrict{} $\DeuxFoncDeuxOp{(\DeuxFoncTrancheCoLax{\DeuxFoncDeuxOp{u}}{c})}$ est une équivalence faible pour tout objet $c$ de $\mathdeuxcat{C}$. Par conséquent, en vertu de la proposition \ref{DeuxFoncDeuxOpW}, le \DeuxFoncteurStrict{} $\DeuxFoncTrancheCoLax{\DeuxFoncDeuxOp{u}}{c}$ est une équivalence faible pour tout objet $c$ de $\mathdeuxcat{C}$. En vertu de la condition LF3, le \DeuxFoncteurStrict{} $\DeuxFoncDeuxOp{u}$ est donc une équivalence faible. Une nouvelle invocation de la proposition \ref{DeuxFoncDeuxOpW} permet de conclure.
\end{proof} 

\begin{prop}\label{TheoremeAOpTrancheLax}
Soit 
$$
\xymatrix{
\mathdeuxcat{A} 
\ar[rr]^{u}
\ar[dr]_{w}
&&\mathdeuxcat{B}
\ar[dl]^{v}
\\
&\mathdeuxcat{C}
}
$$
un diagramme commutatif dans $\DeuxCat$. Supposons que, pour tout objet $c$ de $\mathdeuxcat{C}$, le morphisme
$$
\DeuxFoncOpTrancheLax{u}{c} : \OpTrancheLax{\mathdeuxcat{A}}{w}{c} \to \OpTrancheLax{\mathdeuxcat{B}}{v}{c} 
$$
soit une équivalence faible. Alors $u$ est une équivalence faible.
\end{prop}

\begin{proof}
En vertu des hypothèses et par définition, le \DeuxFoncteurStrict{} $\DeuxFoncUnOp{(\DeuxFoncTrancheLax{\DeuxFoncUnOp{u}}{c})}$ est une équivalence faible pour tout objet $c$ de $\mathdeuxcat{C}$. Par conséquent, en vertu de la proposition \ref{DeuxFoncUnOpW}, le \DeuxFoncteurStrict{} $\DeuxFoncTrancheLax{\DeuxFoncUnOp{u}}{c}$ est une équivalence faible pour tout objet $c$ de $\mathdeuxcat{C}$. En vertu de la proposition \ref{TheoremeATrancheLax}, le \DeuxFoncteurStrict{} $\DeuxFoncUnOp{u}$ est donc une équivalence faible. Une nouvelle invocation de la proposition \ref{DeuxFoncUnOpW} permet de conclure.
\end{proof} 

\begin{prop}\label{PreadjointsW}
Un préadjoint à gauche lax (resp. un préadjoint à droite lax, resp. un préadjoint à droite colax) est une équivalence faible.
\end{prop}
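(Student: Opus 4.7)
Mon plan consiste � se ramener au cas, d�j� trait� par le lemme~\ref{PreadjointW}, d'un pr�adjoint � gauche colax, en invoquant les r�sultats d'invariance par dualit� que constituent les propositions~\ref{DeuxFoncUnOpW}, \ref{DeuxFoncDeuxOpW} et~\ref{DeuxFoncToutOpW}. L'id�e est que tout l'appareillage d�velopp� dans cette section (les constructions $S_{1}$ et $S_{2}$, et les cons�quences que l'on en a tir�es) a pr�cis�ment pour fonction de rendre ce genre d'argument automatique.

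Soit $u : \mathdeuxcat{A} \to \mathdeuxcat{B}$ un \DeuxFoncteurStrict{}. J'exploiterais les identifications fournies directement par la d�finition~\ref{DefMAdjoints} : $u$ est un pr�adjoint � gauche lax (resp.\ un pr�adjoint � droite lax, resp.\ un pr�adjoint � droite colax) si et seulement si $\DeuxFoncDeuxOp{u}$ (resp.\ $\DeuxFoncToutOp{u}$, resp.\ $\DeuxFoncUnOp{u}$) est un pr�adjoint � gauche colax. Dans chacun des trois cas, le lemme~\ref{PreadjointW} entra�nerait que le dual concern� est une �quivalence faible, puis la proposition d'invariance par dualit� correspondante (respectivement~\ref{DeuxFoncDeuxOpW}, \ref{DeuxFoncToutOpW} et~\ref{DeuxFoncUnOpW}) permettrait d'en d�duire que $u$ lui-m�me en est une.

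Il n'y a, � proprement parler, aucun obstacle dans ce raisonnement, qui est purement formel et consiste � d�rouler m�caniquement les trois dualit�s. On notera simplement qu'il est essentiel de disposer d�j� des trois propositions d'invariance : sans la proposition~\ref{DeuxFoncToutOpW} (elle-m�me d�duite des deux autres), le cas du pr�adjoint � droite lax ne se r�duirait pas directement au lemme~\ref{PreadjointW}. L'ensemble de l'article a en quelque sorte �t� organis� pour que cette conclusion devienne imm�diate une fois ces trois invariances acquises.
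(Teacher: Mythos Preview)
Your argument is correct. It differs from the paper's own proof in a mild but genuine way: the paper does not reduce directly to the lemme~\ref{PreadjointW} via duality of the pr\'eadjoint condition, but instead first establishes the three dual variants of the Th\'eor\`eme~A (propositions~\ref{TheoremeATrancheLax}, \ref{TheoremeAOpTrancheLax} and~\ref{TheoremeAOpTrancheCoLax}) and then, for each case, observes that the relevant tranche $2$\nobreakdash-cat\'egorie is asph\'erique (via l'exemple~\ref{ExemplesOF} et le lemme d'asph\'ericit\'e appropri\'e parmi~\ref{OFAspherique2}, \ref{OFAspherique2Bis}, \ref{OFAspherique3}), ce qui permet de conclure par le Th\'eor\`eme~A correspondant. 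Your route is more economical: since the d\'efinition~\ref{DefMAdjoints} already \emph{defines} the three notions as dualis\'ees of ``pr\'eadjoint \`a gauche colax'', you apply le lemme~\ref{PreadjointW} to the dual and pull back via~\ref{DeuxFoncDeuxOpW}, \ref{DeuxFoncToutOpW}, \ref{DeuxFoncUnOpW}, bypassing the explicit statements of the dual Th\'eor\`emes~A. The paper's route has the advantage of making those Th\'eor\`emes~A available as standalone statements (they are used elsewhere, e.g.\ dans la proposition~\ref{PrefibrationFibresAspheriquesW}); your route has the advantage of brevity for this particular proposition.
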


\begin{proof}
C'est une conséquence immédiate de l'exemple \ref{ExemplesOF}, du corollaire \ref{OFAspherique3} et de la proposition \ref{TheoremeATrancheLax} (resp. de l'exemple \ref{ExemplesOF}, du lemme \ref{OFAspherique2} et de la proposition \ref{TheoremeAOpTrancheLax}, resp. de l'exemple \ref{ExemplesOF}, du lemme \ref{OFAspherique2Bis} et de la proposition \ref{TheoremeAOpTrancheCoLax}). 
\end{proof}


\begin{prop}\label{PrefibrationFibresAspheriquesW}
Une préfibration (resp. une préopfibration, resp. une précofibration, resp. une précoopfibration) à fibres asphériques est une équivalence faible.
\end{prop}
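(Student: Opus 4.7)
Mon plan serait le suivant. Je traiterais d'abord le cas d'une préfibration $u : \mathdeuxcat{A} \to \mathdeuxcat{B}$ à fibres asphériques, les trois autres cas s'en déduisant par dualité. L'idée centrale consiste à appliquer la proposition \ref{TheoremeAOpTrancheCoLax} au triangle commutatif obtenu en prenant $\mathdeuxcat{C} = \mathdeuxcat{B}$, $v = 1_{\mathdeuxcat{B}}$ et $w = u$, ce qui ramène le problème à la vérification, pour tout objet $b$ de $\mathdeuxcat{B}$, que le \DeuxFoncteurStrict{} induit $\DeuxFoncOpTrancheCoLax{u}{b} : \OpTrancheCoLax{\mathdeuxcat{A}}{u}{b} \to \OpTrancheCoLax{\mathdeuxcat{B}}{}{b}$ est une équivalence faible.

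Pour cette étape centrale, j'exploiterais la propriété définissant la notion de préfibration : le \DeuxFoncteurStrict{} canonique $J_{b} : \Fibre{\mathdeuxcat{A}}{u}{b} \to \OpTrancheCoLax{\mathdeuxcat{A}}{u}{b}$ est un préadjoint à gauche lax, donc une équivalence faible en vertu de la proposition \ref{PreadjointsW}. Par hypothèse, la fibre $\Fibre{\mathdeuxcat{A}}{u}{b}$ est asphérique ; la propriété de 2 sur 3 entraîne alors l'asphéricité de $\OpTrancheCoLax{\mathdeuxcat{A}}{u}{b}$. L'asphéricité du but $\OpTrancheCoLax{\mathdeuxcat{B}}{}{b}$ est quant à elle fournie par le corollaire \ref{OpTrancheLaxOpTrancheCoLaxAspheriques}. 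Toute flèche entre deux \deux{}catégories asphériques étant une équivalence faible (par un nouvel argument de 2 sur 3 appliqué aux projections vers $e$), on en déduirait que $\DeuxFoncOpTrancheCoLax{u}{b}$ est bien une équivalence faible, ce qui achève le traitement des préfibrations.

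Pour les trois autres cas, je procéderais par dualité : si $u$ est une préopfibration (resp. précofibration, resp. précoopfibration) à fibres asphériques, alors $\DeuxFoncUnOp{u}$ (resp. $\DeuxFoncDeuxOp{u}$, resp. $\DeuxFoncToutOp{u}$) est une préfibration dont les fibres s'identifient, par la remarque \ref{FibresCoOp}, aux duales correspondantes des fibres de $u$ ; celles-ci sont donc asphériques en vertu du corollaire \ref{DeuxCatUnOpAsph} (resp. \ref{DeuxCatDeuxOpAsph}, resp. \ref{DeuxCatToutOpAsph}). Le cas de la préfibration entraîne que ce \DeuxFoncteurStrict{} dual est une équivalence faible, et la proposition \ref{DeuxFoncUnOpW} (resp. \ref{DeuxFoncDeuxOpW}, resp. \ref{DeuxFoncToutOpW}) permet alors de conclure que $u$ en est une.

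Il n'y a pas véritablement d'obstacle technique : la démonstration se ramène à l'assemblage de résultats déjà établis. Le seul point délicat, qui motive le recours à la variante \ref{TheoremeAOpTrancheCoLax} de la condition LF3 plutôt qu'à la définition initiale, est l'observation que la propriété de préadjonction du \DeuxFoncteurStrict{} $J_{b}$ transforme l'asphéricité de la fibre en celle de l'optranche colax, rendant ainsi applicable la version 2-catégorique du Théorème A. Ce lien entre préfibrations et tranches colax est précisément ce pourquoi la proposition \ref{TheoremeAOpTrancheCoLax}, \emph{a priori} dissymétrique par rapport à LF3, joue ici un rôle privilégié.
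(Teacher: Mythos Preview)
Your proposal is correct and follows essentially the same route as the paper: for a préfibration, one uses that $J_b$ is a préadjoint à gauche lax (hence an équivalence faible by proposition~\ref{PreadjointsW}) to transfer asphéricité from the fibre to $\OpTrancheCoLax{\mathdeuxcat{A}}{u}{b}$, then concludes via proposition~\ref{TheoremeAOpTrancheCoLax}; the three other cases are handled by dualité exactly as you indicate. Your presentation is in fact slightly more explicit than the paper's, which compresses the step ``source et but asphériques donc $\DeuxFoncOpTrancheCoLax{u}{b}$ équivalence faible'' into a direct invocation of proposition~\ref{TheoremeAOpTrancheCoLax}.
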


\begin{proof}
Soit $u : \mathdeuxcat{A} \to \mathdeuxcat{B}$ une préfibration à fibres asphériques. Par définition, pour tout objet $b$ de $\mathdeuxcat{B}$, le \DeuxFoncteurStrict{} canonique $J_{b} : \Fibre{\mathdeuxcat{A}}{u}{b} \to \OpTrancheCoLax{\mathdeuxcat{A}}{u}{b}$ est un préadjoint à gauche lax. En vertu de la proposition \ref{PreadjointsW}, c'est donc une équivalence faible. Par conséquent, $\OpTrancheCoLax{\mathdeuxcat{A}}{u}{b}$ est asphérique pour tout objet $b$ de $\mathdeuxcat{B}$. On conclut par une invocation de la proposition \ref{TheoremeAOpTrancheCoLax}. Les trois autres assertions s'en déduisent en invoquant respectivement les propositions \ref{DeuxFoncUnOpW}, \ref{DeuxFoncDeuxOpW} et \ref{DeuxFoncToutOpW} et leur corollaire respectif \ref{DeuxCatUnOpAsph}, \ref{DeuxCatDeuxOpAsph} et \ref{DeuxCatToutOpAsph}.
\end{proof}

\begin{rem}
La proposition \ref{PrefibrationFibresAspheriquesW} constitue la généralisation, à tout localisateur fondamental de $\DeuxCat$, de \cite[lemme 2.43]{ArticleThAMoi}.
\end{rem}

\begin{df}\label{Zonzon}
Étant donné un diagramme commutatif 
$$
\xymatrix{
\mathdeuxcat{A} 
\ar[rr]^{u}
\ar[dr]_{w}
&&\mathdeuxcat{B}
\ar[dl]^{v}
\\
&\mathdeuxcat{C}
}
$$
dans $\DeuxCat$, on dira que $u$ est \emph{lax-asphérique au-dessus de $\mathdeuxcat{C}$} (resp. \emph{lax-opasphérique au-dessus de $\mathdeuxcat{C}$}, resp. \emph{colax-asphérique au-dessus de $\mathdeuxcat{C}$}, resp. \emph{colax-opasphérique au-dessus de $\mathdeuxcat{C}$}) si, pour tout objet $c$ de $\mathdeuxcat{C}$, le \DeuxFoncteurStrict{} $\DeuxFoncTrancheLax{u}{c}$ (resp. $\DeuxFoncOpTrancheLax{u}{c}$, resp. $\DeuxFoncTrancheCoLax{u}{}{c}$, resp. $\DeuxFoncOpTrancheCoLax{u}{c}$) est une équivalence faible. Si $v = 1_{\mathdeuxcat{B}}$, on dira simplement\footnote{En accord avec l'emploi du terme « colax-asphérique », déjà introduit.} que $u$ est \emph{lax-asphérique} (resp. \emph{lax-opasphérique}, resp. \emph{colax-asphérique}, resp. \emph{colax-opasphérique}).
\end{df}

\begin{rem}\label{Zozo}
En vertu des résultats ci-dessus, sous les données de la définition \ref{Zonzon}, le \DeuxFoncteurStrict{} $u$ est une équivalence faible pour peu qu'il soit lax-asphérique, lax-opasphérique, colax-asphérique ou colax-opasphérique au-dessus de $\mathdeuxcat{C}$. En faisant $v = 1_{\mathdeuxcat{B}}$, on obtient le cas particulier suivant : pour tout \DeuxFoncteurStrict{} $u : \mathdeuxcat{A} \to \mathdeuxcat{B}$, si, pour tout objet $b$ de $\mathdeuxcat{B}$, la \deux{}catégorie $\TrancheLax{\mathdeuxcat{A}}{u}{b}$ (resp. $\OpTrancheLax{\mathdeuxcat{A}}{u}{b}$, resp. $\TrancheCoLax{\mathdeuxcat{A}}{u}{b}$, resp. $\OpTrancheCoLax{\mathdeuxcat{A}}{u}{b}$) est asphérique, alors $u$ est une équivalence faible. 
\end{rem}

\begin{prop}\label{2.1.10.THG}
Soit
$$
\xymatrix{
\mathdeuxcat{A} 
\ar[rr]^{u}
\ar[dr]_{w}
&&\mathdeuxcat{B}
\ar[dl]^{v}
\\
&\mathdeuxcat{C}
}
$$
un diagramme commutatif dans $\DeuxCat$. On suppose que, pour tout objet $c$ de $\mathdeuxcat{C}$, le \DeuxFoncteurStrict{} induit entre les fibres $u_{c} : \Fibre{\mathdeuxcat{A}}{w}{c} \to \Fibre{\mathdeuxcat{B}}{v}{c}$ est une équivalence faible.
\begin{itemize}
\item[\rm{(a)}]
Si $v$ et $w$ sont des préfibrations, alors $u$ est colax-opasphérique au-dessus de $\mathdeuxcat{C}$. 
\item[\rm{(b)}]
Si $v$ et $w$ sont des préopfibrations, alors $u$ est colax-asphérique au-dessus de $\mathdeuxcat{C}$. 
\item[\rm{(c)}]
Si $v$ et $w$ sont des précofibrations, alors $u$ est lax-opasphérique au-dessus de $\mathdeuxcat{C}$.
\item[\rm{(d)}]
Si $v$ et $w$ sont des précoopfibrations, alors $u$ est lax-asphérique au-dessus de $\mathdeuxcat{C}$. 
\end{itemize}
En particulier, dans n'importe lequel de ces quatre cas, $u$ est une équivalence faible.
\end{prop}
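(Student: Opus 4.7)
Le plan consiste � traiter l'assertion (a) en d�tail puis � d�duire (b), (c) et (d) par des arguments parfaitement sym�triques. L'observation centrale est la suivante : dans chacun des quatre cas, l'hypoth�se de pr�fibration (ou de sa variante duale) sur $v$ et $w$ signifie pr�cis�ment que les morphismes canoniques $J_{c}$ des fibres vers la tranche ad�quate sont, par d�finition, des pr�adjoints lax ou colax (� gauche ou � droite), donc des �quivalences faibles en vertu du lemme \ref{PreadjointW} et de la proposition \ref{PreadjointsW}.

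Pour (a), fixons un objet $c$ de $\mathdeuxcat{C}$. On dispose, par fonctorialit� des constructions de fibre et d'optranche colax, d'un carr� commutatif de $\DeuxCat$
$$
\xymatrix{
\Fibre{\mathdeuxcat{A}}{w}{c}
\ar[r]^{u_{c}}
\ar[d]_{J_{c}^{\mathdeuxcat{A}}}
&
\Fibre{\mathdeuxcat{B}}{v}{c}
\ar[d]^{J_{c}^{\mathdeuxcat{B}}}
\\
\OpTrancheCoLax{\mathdeuxcat{A}}{w}{c}
\ar[r]_{\DeuxFoncOpTrancheCoLax{u}{c}}
&
\OpTrancheCoLax{\mathdeuxcat{B}}{v}{c}
}
$$
sa commutativit� r�sultant imm�diatement de la relation $w = vu$ et des d�finitions. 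Comme $v$ et $w$ sont des pr�fibrations, $J_{c}^{\mathdeuxcat{A}}$ et $J_{c}^{\mathdeuxcat{B}}$ sont des pr�adjoints � gauche lax, donc des �quivalences faibles en vertu de la proposition \ref{PreadjointsW}. L'hypoth�se assure que $u_{c}$ en est une. La propri�t� de ��2 sur 3�� (axiome LF1) entra�ne alors que $\DeuxFoncOpTrancheCoLax{u}{c}$ est une �quivalence faible pour tout $c$, c'est-�-dire que $u$ est colax-opasph�rique au-dessus de $\mathdeuxcat{C}$. La proposition \ref{TheoremeAOpTrancheCoLax} permet d'en conclure que $u$ est une �quivalence faible.

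Les assertions (b), (c) et (d) se d�duisent du m�me sch�ma en rempla�ant $\OpTrancheCoLax$ respectivement par $\TrancheCoLax$, $\OpTrancheLax$ et $\TrancheLax$ (et les $J_{c}$ par les \DeuxFoncteursStricts{} canoniques correspondants de la d�finition \ref{DefPrefibration}). Dans chacun de ces cas, les fl�ches verticales du carr� analogue sont, par hypoth�se sur $v$ et $w$, des pr�adjoints � droite lax, � gauche colax et � droite colax respectivement, donc des �quivalences faibles par la proposition \ref{PreadjointsW} (ou le lemme \ref{PreadjointW}). Un argument de ��2 sur 3�� identique au pr�c�dent fournit que $u$ est colax-asph�rique (resp. lax-opasph�rique, resp. lax-asph�rique) au-dessus de $\mathdeuxcat{C}$, et l'on conclut respectivement par la condition LF3 (cf.~le th�or�me \ref{ThABC}), la proposition \ref{TheoremeAOpTrancheLax} et la proposition \ref{TheoremeATrancheLax}.

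Il n'y a pas d'obstacle conceptuel r�el : le travail essentiel a �t� accompli dans les propositions \ref{PreadjointsW}, \ref{TheoremeAOpTrancheCoLax}, \ref{TheoremeATrancheLax} et \ref{TheoremeAOpTrancheLax}. La seule vigilance requise porte sur l'appariement correct, dans chaque cas, du type de pr�adjoint fourni par la d�finition de pr�fibration/pr�opfibration/pr�cofibration/pr�coopfibration avec la variante d�j� d�montr�e du th�or�me A \deux{}cat�gorique.
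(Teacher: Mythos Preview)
Your proof is correct and follows essentially the same approach as the paper: the paper also treats case (a) via the commutative square with the $J_c$'s, invokes proposition~\ref{PreadjointsW} for the vertical arrows, and concludes by two-out-of-three; it then dismisses (b), (c), (d) with the single phrase ``les trois autres s'en d\'eduisant par dualit\'e'', whereas you spell out the correct pairings explicitly.
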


\begin{proof}
Plaçons-nous dans le premier cas, les trois autres s'en déduisant par dualité. Pour tout objet $c$ de $\mathdeuxcat{C}$, on a un carré commutatif
$$
\xymatrix{
\Fibre{\mathdeuxcat{A}}{w}{c}
\ar[r]^{u_{c}}
\ar[d]_{J_{c}}
&
\Fibre{\mathdeuxcat{B}}{v}{c}
\ar[d]^{J_{c}}
\\
\OpTrancheCoLax{\mathdeuxcat{A}}{w}{c}
\ar[r]_{\DeuxFoncOpTrancheCoLax{u}{c}}
&
\OpTrancheCoLax{\mathdeuxcat{B}}{v}{c}
}
$$
dont les flèches verticales sont des préadjoints à gauche lax, donc des équivalences faibles (proposition \ref{PreadjointsW}). Comme, par hypothèse, $u_{c}$ est une équivalence faible, il en est de même de $\DeuxFoncOpTrancheCoLax{u}{c}$.
\end{proof}

\emph{On ne suppose plus fixé de \ClasseDeuxLocFond{}.}

\begin{theo}\label{DeuxLocFondHuitDef}
Soit $\DeuxLocFond{W}$ une partie de $\UnCell{\DeuxCat}$. Les conditions suivantes sont équivalentes.
\begin{itemize}
\item[(i)] 
$\DeuxLocFond{W}$ est un \ClasseDeuxLocFond{}.
\item[(ii)]
Les conditions suivantes sont vérifiées. 
\begin{itemize}
\item[LF1$'$] 
La partie $\DeuxLocFond{W}$ de $\UnCell{\DeuxCat}$ est faiblement saturée. 
\item[LF2$'$] 
Si une petite \deux{}catégorie $\mathdeuxcat{A}$ admet un objet admettant un objet initial, alors $\mathdeuxcat{A} \to \DeuxCatPonct$ est dans $\DeuxLocFond{W}$. 
\item[LF3$'$] Si 
$$
\xymatrix{
\mathdeuxcat{A} 
\ar[rr]^{u}
\ar[dr]_{w}
&&\mathdeuxcat{B}
\ar[dl]^{v}
\\
&\mathdeuxcat{C}
}
$$
désigne un triangle commutatif dans $\DeuxCat$ et si, pour tout objet $c$ de $\mathdeuxcat{C}$, le \deux{}foncteur strict 
$$
\DeuxFoncTrancheLax{u}{c} : \TrancheLax{\mathdeuxcat{A}}{w}{c} \to \TrancheLax{\mathdeuxcat{B}}{v}{c} 
$$
est dans $\DeuxLocFond{W}$, alors $u$ est dans $\DeuxLocFond{W}$.
\end{itemize}
\item[(iii)]
Les conditions suivantes sont vérifiées.
\begin{itemize}
\item[LF1$''$] 
La partie $\DeuxLocFond{W}$ de $\UnCell{\DeuxCat}$ est faiblement saturée. 
\item[LF2$''$] 
Si une petite \deux{}catégorie $\mathdeuxcat{A}$ op-admet un objet admettant un objet final, alors $\mathdeuxcat{A} \to \DeuxCatPonct$ est dans $\DeuxLocFond{W}$. 
\item[LF3$''$] Si 
$$
\xymatrix{
\mathdeuxcat{A} 
\ar[rr]^{u}
\ar[dr]_{w}
&&\mathdeuxcat{B}
\ar[dl]^{v}
\\
&\mathdeuxcat{C}
}
$$
désigne un triangle commutatif dans $\DeuxCat$ et si, pour tout objet $c$ de $\mathdeuxcat{C}$, le \deux{}foncteur strict
$$
\DeuxFoncOpTrancheCoLax{u}{c} : \OpTrancheCoLax{\mathdeuxcat{A}}{w}{c} \to \OpTrancheCoLax{\mathdeuxcat{B}}{v}{c} 
$$
est dans $\DeuxLocFond{W}$, alors $u$ est dans $\DeuxLocFond{W}$.
\end{itemize}
\item[(iv)]
Les conditions suivantes sont vérifiées. 
\begin{itemize}
\item[LF1$'''$] 
La partie $\DeuxLocFond{W}$ de $\UnCell{\DeuxCat}$ est faiblement saturée. 
\item[LF2$'''$] 
Si une petite \deux{}catégorie $\mathdeuxcat{A}$ op-admet un objet admettant un objet initial, alors $\mathdeuxcat{A} \to \DeuxCatPonct$ est dans $\DeuxLocFond{W}$. 
\item[LF3$'''$] Si 
$$
\xymatrix{
\mathdeuxcat{A} 
\ar[rr]^{u}
\ar[dr]_{w}
&&\mathdeuxcat{B}
\ar[dl]^{v}
\\
&\mathdeuxcat{C}
}
$$
désigne un triangle commutatif dans $\DeuxCat$ et si, pour tout objet $c$ de $\mathdeuxcat{C}$, le \deux{}foncteur strict
$$
\DeuxFoncOpTrancheLax{u}{c} : \OpTrancheLax{\mathdeuxcat{A}}{w}{c} \to \OpTrancheLax{\mathdeuxcat{B}}{v}{c} 
$$
est dans $\DeuxLocFond{W}$, alors $u$ est dans $\DeuxLocFond{W}$.
\end{itemize}
\item[(v)]
Les conditions suivantes sont vérifiées.
\begin{itemize}
\item[LF$\alpha$] 
La partie $\DeuxLocFond{W}$ de $\UnCell{\DeuxCat}$ est faiblement saturée. 
\item[LF$\beta$] 
Le morphisme canonique $[1] \to \DeuxCatPonct$ est dans $\DeuxLocFond{W}$. 
\item[LF$\gamma$] Si 
$$
\xymatrix{
\mathdeuxcat{A} 
\ar[rr]^{u}
\ar[dr]_{p}
&&\mathdeuxcat{B}
\ar[dl]^{q}
\\
&\mathdeuxcat{C}
}
$$
désigne un triangle commutatif dans $\DeuxCat$, si $p$ et $q$ sont des précoopfibrations et si, pour tout objet $c$ de $\mathdeuxcat{C}$, le \DeuxFoncteurStrict{} induit entre les fibres
$$
\Fibre{\mathdeuxcat{A}}{p}{c} \to \Fibre{\mathdeuxcat{B}}{q}{c} 
$$
est dans $\DeuxLocFond{W}$, alors $u$ est dans $\DeuxLocFond{W}$.
\end{itemize}
\item[(vi)]
Les conditions suivantes sont vérifiées. 
\begin{itemize}
\item[LF$\alpha'$] 
La partie $\DeuxLocFond{W}$ de $\UnCell{\DeuxCat}$ est faiblement saturée. 
\item[LF$\beta'$] 
Le morphisme canonique $[1] \to \DeuxCatPonct$ est dans $\DeuxLocFond{W}$. 
\item[LF$\gamma'$] Si 
$$
\xymatrix{
\mathdeuxcat{A} 
\ar[rr]^{u}
\ar[dr]_{p}
&&\mathdeuxcat{B}
\ar[dl]^{q}
\\
&\mathdeuxcat{C}
}
$$
désigne un triangle commutatif dans $\DeuxCat$, si $p$ et $q$ sont des précofibrations et si, pour tout objet $c$ de $\mathdeuxcat{C}$, le \DeuxFoncteurStrict{} induit entre les fibres
$$
\Fibre{\mathdeuxcat{A}}{p}{c} \to \Fibre{\mathdeuxcat{B}}{q}{c} 
$$
est dans $\DeuxLocFond{W}$, alors $u$ est dans $\DeuxLocFond{W}$.
\end{itemize}
\item[(vii)]
Les conditions suivantes sont vérifiées.
\begin{itemize}
\item[LF$\alpha''$] 
La partie $\DeuxLocFond{W}$ de $\UnCell{\DeuxCat}$ est faiblement saturée. 
\item[LF$\beta''$] 
Le morphisme canonique $[1] \to \DeuxCatPonct$ est dans $\DeuxLocFond{W}$. 
\item[LF$\gamma''$] Si 
$$
\xymatrix{
\mathdeuxcat{A} 
\ar[rr]^{u}
\ar[dr]_{p}
&&\mathdeuxcat{B}
\ar[dl]^{q}
\\
&\mathdeuxcat{C}
}
$$
désigne un triangle commutatif dans $\DeuxCat$, si $p$ et $q$ sont des préopfibrations et si, pour tout objet $c$ de $\mathdeuxcat{C}$, le \DeuxFoncteurStrict{} induit entre les fibres
$$
\Fibre{\mathdeuxcat{A}}{p}{c} \to \Fibre{\mathdeuxcat{B}}{q}{c} 
$$
est dans $\DeuxLocFond{W}$, alors $u$ est dans $\DeuxLocFond{W}$.
\end{itemize}
\item[(viii)]
Les conditions suivantes sont vérifiées.
\begin{itemize}
\item[LF$\alpha'''$] 
La partie $\DeuxLocFond{W}$ de $\UnCell{\DeuxCat}$ est faiblement saturée. 
\item[LF$\beta'''$] 
Le morphisme canonique $[1] \to \DeuxCatPonct$ est dans $\DeuxLocFond{W}$. 
\item[LF$\gamma'''$] Si 
$$
\xymatrix{
\mathdeuxcat{A} 
\ar[rr]^{u}
\ar[dr]_{p}
&&\mathdeuxcat{B}
\ar[dl]^{q}
\\
&\mathdeuxcat{C}
}
$$
désigne un triangle commutatif dans $\DeuxCat$, si $p$ et $q$ sont des préfibrations et si, pour tout objet $c$ de $\mathdeuxcat{C}$, le \DeuxFoncteurStrict{} induit entre les fibres
$$
\Fibre{\mathdeuxcat{A}}{p}{c} \to \Fibre{\mathdeuxcat{B}}{q}{c} 
$$
est dans $\DeuxLocFond{W}$, alors $u$ est dans $\DeuxLocFond{W}$.
\end{itemize}
\end{itemize}
\end{theo}

\begin{proof}
L'implication $(i) \Rightarrow (v)$ résulte de la proposition \ref{2.1.10.THG}. 

Montrons l'implication $(v) \Rightarrow (iii)$. 

Soit $\mathdeuxcat{A}$ une petite \deux{}catégorie. Les flèches du triangle commutatif
$$
\xymatrix{
[1] \times \mathdeuxcat{A} 
\ar[rr]^{pr_{2}}
\ar[dr]_{pr_{2}}
&&\mathdeuxcat{A}
\ar[dl]^{1_{\mathdeuxcat{A}}}
\\
&\mathdeuxcat{A}
}
$$
sont des précoopfibrations et, pour tout objet $a$ de $\mathdeuxcat{A}$, le \DeuxFoncteurStrict{} induit entre les fibres au-dessus de $a$ s'identifie au morphisme canonique $[1] \to \DeuxCatPonct$, qui est dans $\DeuxLocFond{W}$ par hypothèse. Ainsi, pour toute petite \deux{}catégorie $\mathdeuxcat{A}$, la projection canonique $[1] \times \mathdeuxcat{A} \to \mathdeuxcat{A}$ est dans $\DeuxLocFond{W}$. 

Soit $\mathdeuxcat{A}$ une petite \deux{}catégorie op-admettant un objet admettant un objet final. On construit facilement (voir la démonstration de \cite[lemme 2.27]{ArticleThAMoi}) un endomorphisme constant de $\mathdeuxcat{A}$ homotope à $1_{\mathdeuxcat{A}}$. La \deux{}catégorie $\mathdeuxcat{A}$ est donc contractile. En vertu de ce qui précède et de \cite[lemme 1.4.8]{THG}, le morphisme canonique $\mathdeuxcat{A} \to \DeuxCatPonct$ est dans $\DeuxLocFond{W}$. La condition LF$2''$ est donc vérifiée.

Pour tout morphisme $u : \mathdeuxcat{A} \to \mathdeuxcat{B}$ de $\DeuxCat$, on peut considérer les \DeuxFoncteursStricts{}
$$
\begin{aligned}
\DeuxCatUnOp{\mathdeuxcat{B}} &\to \DeuxCatDeuxCat
\\
b &\mapsto \OpTrancheCoLax{\mathdeuxcat{A}}{u}{b}
\end{aligned}
$$
et 
$$
\begin{aligned}
\mathdeuxcat{A} &\to \DeuxCatDeuxCat
\\
a &\mapsto \DeuxCatUnOp{\left(\TrancheCoLax{\mathdeuxcat{B}}{}{u(a)}\right)}.
\end{aligned}
$$
Considérons la \deux{}catégorie $S(u)$ définie comme suit. Ses objets sont les triplets $(b, a , k : b \to u(a))$ avec $b$ un objet de $\mathdeuxcat{B}$, $a$ un objet de $\mathdeuxcat{A}$ et $k$ une \un{}cellule de $\mathdeuxcat{B}$. Les \un{}cellules de $(b,a,k)$ vers $(b',a',k')$ sont les triplets $(f : b' \to b, g : a \to a', \alpha : u(g) k f \Rightarrow k')$, $f$ étant une \un{}cellule de $\mathdeuxcat{B}$, $g$ une \un{}cellule de $\mathdeuxcat{A}$ et $\alpha$ une \deux{}cellule de $\mathdeuxcat{B}$. Les \deux{}cellules de $(f,g,\alpha)$ vers $(f',g',\alpha')$ sont les couples $(\varphi : f \Rightarrow f', \gamma : g \Rightarrow g')$ avec $\varphi$ une \deux{}cellule de $\mathdeuxcat{B}$ et $\gamma$ une \deux{}cellule de $\mathdeuxcat{A}$ telles que $\alpha' \CompDeuxUn (u(\gamma) \CompDeuxZero k \CompDeuxZero \varphi) = \alpha$. On vérifie l'existence d'isomorphismes canoniques
$$
S(u) \simeq \DeuxInt{\DeuxCatUnOp{\mathdeuxcat{B}}} \OpTrancheCoLax{\mathdeuxcat{A}}{u}{b} \simeq \DeuxInt{\mathdeuxcat{A}} \DeuxCatUnOp{\left(\TrancheCoLax{\mathdeuxcat{B}}{}{u(a)}\right)}.
$$
Les projections canoniques
$$
\begin{aligned}
s_{u} : S(u) &\to \DeuxCatUnOp{\mathdeuxcat{B}}
\\
(b,a,k) &\mapsto b
\\
(f,g,\alpha) &\mapsto f
\\
(\varphi, \gamma) &\mapsto \varphi
\end{aligned}
$$
et
$$
\begin{aligned}
t_{u} : S(u) &\to \mathdeuxcat{A}
\\
(b,a,k) &\mapsto a
\\
(f,g,\alpha) &\mapsto g
\\
(\varphi, \gamma) &\mapsto \gamma
\end{aligned}
$$
sont donc des précoopfibrations. De plus, pour tout objet $a$ de $\mathdeuxcat{A}$, la \deux{}catégorie $\DeuxCatUnOp{\left(\TrancheCoLax{\mathdeuxcat{B}}{}{u(a)}\right)}$ op-admet un objet admettant un objet final. En vertu de ce qui précède, le morphisme canonique $\DeuxCatUnOp{\left(\TrancheCoLax{\mathdeuxcat{B}}{}{u(a)}\right)} \to \DeuxCatPonct$ est donc dans $\DeuxLocFond{W}$. Or, c'est à ce morphisme que s'identifie le \DeuxFoncteurStrict{} induit entre les fibres au-dessus de $a$ par le diagramme
$$
\xymatrix{
S(u)
\ar[rr]^{t_{u}}
\ar[dr]_{t_{u}}
&&\mathdeuxcat{A}
\ar[dl]^{1_{\mathdeuxcat{A}}}
\\
&\mathdeuxcat{A}
&.
}
$$
Comme $t_{u}$ et $1_{\mathdeuxcat{A}}$ sont des précoopfibrations, $t_{u}$ est dans $\DeuxLocFond{W}$. 

Soit 
$$
\xymatrix{
\mathdeuxcat{A}
\ar[r]^{u}
\ar[d]_{v}
&
\mathdeuxcat{B}
\ar[d]^{w}
\\
\mathdeuxcat{A'}
\ar[r]_{u'}
&
\mathdeuxcat{B'}
}
$$
un carré commutatif dans $\DeuxCat$. On définit un \DeuxFoncteurStrict{}
$$
\begin{aligned}
S(v,w) : S(u) &\to S(u')
\\
(b,a,k) &\mapsto (w(b), v(a), w(k))
\\
(f, g, \alpha) &\mapsto (w(f), v(g), w(\alpha))
\\
(\varphi, \gamma) &\mapsto (w(\varphi), v(\gamma)).
\end{aligned}
$$
Cela fournit un diagramme commutatif
$$
\xymatrix{
\DeuxCatUnOp{\mathdeuxcat{B}}
\ar[d]_{\DeuxFoncUnOp{w}}
&
S(u)
\ar[l]_{s_{u}}
\ar[r]^{t_{u}}
\ar[d]^{S(v,w)}
&
\mathdeuxcat{A}
\ar[d]^{v}
\\
\DeuxCatUnOp{\mathdeuxcat{B}'}
&
S(u')
\ar[l]^{s_{u'}}
\ar[r]_{t_{u'}}
&
\mathdeuxcat{A'}
}
$$
dans lequel $t_{u}$ et $t_{u'}$ sont dans $\DeuxLocFond{W}$. 

Soit
$$
\xymatrix{
\mathdeuxcat{A}
\ar[rr]^{u}
\ar[dr]_{w}
&&\mathdeuxcat{B}
\ar[dl]^{v}
\\
&\mathdeuxcat{C}
}
$$
un diagramme commutatif dans $\DeuxCat$ tel que, pour tout objet $c$ de $\mathdeuxcat{C}$, le \DeuxFoncteurStrict{} $\DeuxFoncTrancheCoLax{u}{c} : \TrancheCoLax{\mathdeuxcat{A}}{w}{c} \to \TrancheCoLax{\mathdeuxcat{B}}{v}{c}$ soit dans $\DeuxLocFond{W}$. En vertu de ce qui précède, cela fournit un diagramme commutatif 
$$
\xymatrix{
\DeuxCatUnOp{\mathdeuxcat{C}}
\ar[d]_{1_{\DeuxCatUnOp{\mathdeuxcat{C}}}}
&
S(w)
\ar[l]_{s_{w}}
\ar[r]^{t_{w}}
\ar[d]^{S(u,1_{\mathdeuxcat{C}})}
&
\mathdeuxcat{A}
\ar[d]^{u}
\\
\DeuxCatUnOp{\mathdeuxcat{C}}
&
S(v)
\ar[l]^{s_{v}}
\ar[r]_{t_{v}}
&
\mathdeuxcat{B}
&.
}
$$
En particulier, on a un diagramme commutatif
$$
\xymatrix{
S(w)
\ar[rr]^{S(u, 1_{\mathdeuxcat{C}})}
\ar[dr]_{s_{w}}
&&
S(v)
\ar[dl]^{s_{v}}
\\
&
\DeuxCatUnOp{\mathdeuxcat{C}}
}
$$
dans lequel $s_{w}$ et $s_{v}$ sont des précoopfibrations. Pour tout objet $c$ de $\mathdeuxcat{C}$, le \DeuxFoncteurStrict{} induit par ce diagramme entre les fibres au-dessus de $c$ s'identifie à $\DeuxFoncOpTrancheCoLax{u}{c}$, qui est dans $\DeuxLocFond{W}$ par hypothèse. Par conséquent, $S(u, 1_{\mathdeuxcat{C}})$ est dans $\DeuxLocFond{W}$ en vertu de la condition LF$\gamma$. Comme $t_{w}$ et $t_{v}$ sont dans $\DeuxLocFond{W}$, qui est faiblement saturée, $u$ est dans $\DeuxLocFond{W}$, ce qui termine la démonstration de la condition LF$3''$, et donc de l'implication $(v) \Rightarrow (iii)$.

Montrons l'implication $(iii) \Rightarrow (i)$. Notons $\DeuxFoncUnOp{\DeuxLocFond{W}}$ la partie de $\UnCell{\DeuxCat}$ définie par
$$
\DeuxFoncUnOp{\DeuxLocFond{W}} = \{ u \in \UnCell{\DeuxCat} \vert \DeuxFoncUnOp{u} \in \DeuxLocFond{W} \}.
$$
Vérifions que la classe $\DeuxFoncUnOp{\DeuxLocFond{W}}$ constitue un \ClasseDeuxLocFond{}. La saturation faible de $\DeuxFoncUnOp{\DeuxLocFond{W}}$ est immédiate. Soit $\mathdeuxcat{A}$ une petite \deux{}catégorie admettant un objet admettant un objet final. Alors, $\DeuxCatUnOp{\mathdeuxcat{A}}$ op-admet un objet admettant un objet final, donc le morphisme $\DeuxCatUnOp{\mathdeuxcat{A}} \to \DeuxCatPonct$ est dans $\DeuxLocFond{W}$, c'est-à-dire que le morphisme $\mathdeuxcat{A} \to \DeuxCatPonct$ est dans $\DeuxFoncUnOp{\DeuxLocFond{W}}$. Soit 
$$
\xymatrix{
\mathdeuxcat{A}
\ar[rr]^{u}
\ar[dr]_{w}
&&\mathdeuxcat{B}
\ar[dl]^{v}
\\
&\mathdeuxcat{C}
}
$$
un diagramme commutatif dans $\DeuxCat$ tel que, pour tout objet $c$ de $\mathdeuxcat{C}$, le \DeuxFoncteurStrict{} $\DeuxFoncTrancheCoLax{u}{c}$ soit dans $\DeuxFoncUnOp{\DeuxLocFond{W}}$, c'est-à-dire tel que le \DeuxFoncteurStrict{} $\DeuxFoncUnOp{(\DeuxFoncTrancheCoLax{u}{c})}$ soit dans $\DeuxLocFond{W}$, c'est-à-dire tel que le \DeuxFoncteurStrict{} $\DeuxFoncUnOp{(\DeuxFoncTrancheCoLax{\DeuxFoncUnOp{(\DeuxFoncUnOp{u})}}{c})}$ soit dans $\DeuxLocFond{W}$. Pour tout objet $c$ de $\mathdeuxcat{C}$, le \DeuxFoncteurStrict{} $\DeuxFoncOpTrancheCoLax{\DeuxFoncUnOp{u}}{c}$ est donc dans $\DeuxLocFond{W}$. La condition LF3$'$ implique que $\DeuxFoncUnOp{u}$ est dans $\DeuxLocFond{W}$, c'est-à-dire que $u$ est dans $\DeuxFoncUnOp{\DeuxLocFond{W}}$, qui est donc bien un \ClasseDeuxLocFond{}. On en déduit $\DeuxLocFond{W} = \DeuxFoncUnOp{\DeuxLocFond{W}}$ en vertu de la proposition \ref{DeuxFoncUnOpW}. L'implication considérée en résulte. 

Les autres se démontrent de façon analogue ou se déduisent de ce qui précède par un argument de dualité.   
\end{proof}

\section{Correspondances fondamentales}

\emph{On suppose fixé un localisateur fondamental $\DeuxLocFond{W}$ de $\DeuxCat$.}

\begin{prop}\label{CouniteColaxAspherique}
Pour toute petite \deux{}catégorie $\mathdeuxcat{A}$, le \DeuxFoncteurStrict{} $\StrictCanonique{\mathdeuxcat{A}} : \TildeLax{\mathdeuxcat{A}} \to \mathdeuxcat{A}$ est une équivalence faible.
\end{prop}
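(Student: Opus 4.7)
Le plan est d'appliquer l'axiome LF3 au triangle commutatif
$$
\xymatrix{
\TildeLax{\mathdeuxcat{A}}
\ar[rr]^{\StrictCanonique{\mathdeuxcat{A}}}
\ar[dr]_{\StrictCanonique{\mathdeuxcat{A}}}
&&\mathdeuxcat{A}
\ar[dl]^{1_{\mathdeuxcat{A}}}
\\
&\mathdeuxcat{A}
}
$$
dans $\DeuxCat$. Tout revient donc � v�rifier que, pour tout objet $a$ de $\mathdeuxcat{A}$, le \DeuxFoncteurStrict{} induit
$$
\DeuxFoncTrancheCoLax{\StrictCanonique{\mathdeuxcat{A}}}{a} : \TrancheCoLax{\TildeLax{\mathdeuxcat{A}}}{\StrictCanonique{\mathdeuxcat{A}}}{a} \to \TrancheCoLax{\mathdeuxcat{A}}{}{a}
$$
est une �quivalence faible.

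Pour cela, j'observerais d'une part que la \deux{}cat�gorie source $\TrancheCoLax{\TildeLax{\mathdeuxcat{A}}}{\StrictCanonique{\mathdeuxcat{A}}}{a}$ admet un objet admettant un objet final, en vertu du lemme \ref{LemmeTranchesCouniteAspheriques} ; elle est donc asph�rique par la condition LF2. D'autre part, la \deux{}cat�gorie but $\TrancheCoLax{\mathdeuxcat{A}}{}{a}$ est asph�rique en vertu de l'exemple \ref{CommaCoLaxAspherique}. Les morphismes canoniques de chacune de ces deux \deux{}cat�gories vers $\DeuxCatPonct$ sont donc des �quivalences faibles, et la saturation faible de $\DeuxLocFond{W}$ (propri�t� de � 2 sur 3 �) permet d'en d�duire que $\DeuxFoncTrancheCoLax{\StrictCanonique{\mathdeuxcat{A}}}{a}$ est une �quivalence faible.

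La condition LF3 (appliqu�e au triangle commutatif ci-dessus) permet alors de conclure que $\StrictCanonique{\mathdeuxcat{A}}$ est une \'equivalence faible. L'essentiel du travail a donc d�j� �t� fait en amont : d'abord dans \cite{ArticleThAMoi}, o� est �tablie la propri�t� cruciale des tranches colax de la co�nit� (lemme \ref{LemmeTranchesCouniteAspheriques}), et ensuite dans la construction m�me de l'axiomatique $\DeuxLocFond{W}$, puisque LF2 et LF3 sont pr�cis�ment les outils qu'il faut combiner. Le seul petit point � surveiller est de bien identifier la \deux{}cat�gorie $\TrancheCoLax{\mathdeuxcat{A}}{1_{\mathdeuxcat{A}}}{a}$ � $\TrancheCoLax{\mathdeuxcat{A}}{}{a}$, conform�ment � la convention de notation adopt�e plus haut, ce qui est imm�diat.
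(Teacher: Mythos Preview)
Your proof is correct and follows essentially the same route as the paper's. The paper's proof is simply more terse: it cites only the lemme~\ref{LemmeTranchesCouniteAspheriques} and LF2, leaving implicit the use of LF3 via the observation (exemple~\ref{CommaCoLaxAspherique}) that a colax-asph\'erique strict $2$-functor is a weak equivalence; you have made this step explicit by writing out the triangle and the $2$-out-of-$3$ argument.
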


\begin{proof}
C'est une conséquence immédiate du lemme \ref{LemmeTranchesCouniteAspheriques} et de la condition LF2.
\end{proof}

\begin{rem}
La proposition \ref{CouniteColaxAspherique} constitue la généralisation, à tout localisateur fondamental de $\DeuxCat$, de \cite[proposition 5.10]{ArticleThAMoi}.
\end{rem}

\begin{lemme}\label{BarreTildeW}
Soit $u : \mathdeuxcat{A} \to \mathdeuxcat{B}$ un \deux{}foncteur strict. Les propositions suivantes sont équivalentes. 
\begin{itemize}
\item[(i)]
$u$ est une équivalence faible.
\item[(ii)]
$\TildeLax{u}$ est une équivalence faible.
\end{itemize}
\end{lemme}

\begin{proof}
Cela résulte de l'égalité $\StrictCanonique{\mathdeuxcat{B}} \TildeLax{u} = u \StrictCanonique{\mathdeuxcat{A}}$, de la proposition \ref{CouniteColaxAspherique} et d'un argument de « 2 sur 3 ». 
\end{proof}

\begin{df}\label{DfEquiLax}
On dira qu'un morphisme $u$ de $\DeuxCatLax$ est une $\DeuxLocFond{W}$\emph{-équivalence faible lax} ou, plus simplement, une \emph{équivalence faible lax}, voire, en l'absence d'ambiguïté, une \emph{équivalence faible}, si $\TildeLax{u}$ est dans $\DeuxLocFond{W}$. On notera $\DeuxLocFondLaxInduit{W}$ la classe des $\DeuxLocFond{W}$\nobreakdash-équivalences faibles lax.
\end{df}

Dans la suite, on commettra l'abus sans conséquence de considérer $\DeuxCat$ comme une sous-catégorie (non pleine) de $\DeuxCatLax$. 

\begin{rem}\label{EquiStricteEquiLax}
En vertu du lemme \ref{BarreTildeW}, un \deux{}foncteur strict est une équivalence faible si et seulement si c'est une équivalence faible lax.
\end{rem}

\begin{rem}\label{SatFaibleEquiLax}
Par fonctorialité, la classe des équivalences faibles lax est faiblement saturée.
\end{rem}

\begin{lemme}\label{UniteW}
Pour toute petite \deux{}catégorie $\mathdeuxcat{A}$, le \DeuxFoncteurLax{} $\LaxCanonique{\mathdeuxcat{A}} : \mathdeuxcat{A} \to \TildeLax{\mathdeuxcat{A}}$ est une équivalence faible.
\end{lemme}

\begin{proof}
Cela résulte de l'égalité $\StrictCanonique{\mathdeuxcat{A}} \LaxCanonique{\mathdeuxcat{A}} = 1_{\mathdeuxcat{A}}$, du fait que $\StrictCanonique{\mathdeuxcat{A}}$ est une équivalence faible et de la saturation faible de la classe des équivalences faibles lax. 
\end{proof}

\begin{rem}
Le lemme \ref{UniteW} constitue la généralisation, à tout localisateur fondamental de $\DeuxCat$, de \cite[proposition 5.11]{ArticleThAMoi}.
\end{rem}

\begin{theo}\label{EqCatLocDeuxCatDeuxCatLax}
L'inclusion $\DeuxCat \hookrightarrow \DeuxCatLax$ induit une équivalence de catégories entre les catégories localisées $\Localisation{\DeuxCat}{\DeuxLocFond{W}}$ et $\Localisation{\DeuxCatLax}{\DeuxLocFondLaxInduit{W}}$.
\end{theo}

\begin{proof}
C'est une conséquence du fait que les composantes des transformations naturelles $\TransLaxCanonique$ et $\TransStrictCanonique$ sont dans $\DeuxLocFondLaxInduit{W}$ et $\DeuxLocFond{W}$ respectivement.
\end{proof}

\begin{rem}
Le théorème \ref{EqCatLocDeuxCatDeuxCatLax} constitue la généralisation, à tout localisateur fondamental de $\DeuxCat$, de \cite[théorème 5.15]{ArticleThAMoi}.
\end{rem}

\begin{theo}\label{TheoremeALaxTrancheLax}
Soit
$$
\xymatrix{
\mathdeuxcat{A} 
\ar[rr]^{u}
\ar[dr]_{w}
&&\mathdeuxcat{B}
\ar[dl]^{v}
\\
&\mathdeuxcat{C}
}
$$
un diagramme commutatif dans $\DeuxCatLax$. Supposons que, pour tout objet $c$ de $\mathdeuxcat{C}$, le \DeuxFoncteurLax{} $\DeuxFoncTrancheLax{u}{c} : \TrancheLax{\mathdeuxcat{A}}{w}{c} \to \TrancheLax{\mathdeuxcat{B}}{v}{c}$ soit une équivalence faible. Alors $u$ est une équivalence faible.
\end{theo}

\begin{proof}
On esquisse la démonstration. Le lecteur pourra se reporter à \cite[théorème 6.6]{ArticleThAMoi} pour les détails d'une démonstration d'un énoncé plus fort dans le cas particulier des équivalences faibles définies par le nerf (voir la remarque \ref{ThAPlusFort}). Supposant donné un diagramme commutatif de \DeuxFoncteursLax{} tel que celui de l'énoncé, on peut lui associer, pour tout objet $c$ de $\mathdeuxcat{C}$, un \DeuxFoncteurStrict{} $\DeuxFoncTrancheLax{\TildeLax{u}}{c} : \TrancheLax{\TildeLax{\mathdeuxcat{A}}}{\BarreLax{w}}{c} \to \TrancheLax{\TildeLax{\mathdeuxcat{B}}}{\BarreLax{v}}{c}$ induit par le diagramme commutatif de \DeuxFoncteursStricts{}
$$
\xymatrix{
\TildeLax{\mathdeuxcat{A}}
\ar[rr]^{\TildeLax{u}}
\ar[dr]_{\BarreLax{w}}
&&\TildeLax{\mathdeuxcat{B}}
\ar[dl]^{\BarreLax{v}}
\\
&\mathdeuxcat{C}
}
$$
dans lequel on a posé $\BarreLax{v} = \StrictCanonique{\mathdeuxcat{C}} \TildeLax{v}$ et $\BarreLax{w} = \StrictCanonique{\mathdeuxcat{C}} \TildeLax{w}$.  

On peut alors vérifier la commutativité du diagramme
$$
\xymatrix{
\TrancheLax{\TildeLax{\mathdeuxcat{A}}}{\BarreLax{w}}{c}
\ar[r]^{\DeuxFoncTrancheLax{\TildeLax{u}}{c}}
&\TrancheLax{\TildeLax{\mathdeuxcat{B}}}{\BarreLax{v}}{c}
\\
\TrancheLax{\mathdeuxcat{A}}{w}{c}
\ar[u]^{\DeuxFoncTrancheLax{\LaxCanonique{\mathdeuxcat{A}}}{c}}
\ar[r]_{\DeuxFoncTrancheLax{u}{c}}
&\TrancheLax{\mathdeuxcat{B}}{v}{c}
\ar[u]_{\DeuxFoncTrancheLax{\LaxCanonique{\mathdeuxcat{B}}}{c}}
&.
}
$$

Les \DeuxFoncteursLax{} $\DeuxFoncTrancheLax{\LaxCanonique{\mathdeuxcat{A}}}{c}$ et $\DeuxFoncTrancheLax{\LaxCanonique{\mathdeuxcat{B}}}{c}$ admettent comme rétraction les \DeuxFoncteursStricts{} $\DeuxFoncTrancheLax{\StrictCanonique{\mathdeuxcat{A}}}{c}$ et $\DeuxFoncTrancheLax{\StrictCanonique{\mathdeuxcat{B}}}{c}$ respectivement ; ces \DeuxFoncteursStricts{} sont des préadjoints à gauche colax, donc des équivalences faibles. Les \DeuxFoncteursLax{} $\DeuxFoncTrancheLax{\LaxCanonique{\mathdeuxcat{A}}}{c}$ et $\DeuxFoncTrancheLax{\LaxCanonique{\mathdeuxcat{B}}}{c}$ sont donc des équivalences faibles en vertu des remarques \ref{EquiStricteEquiLax} et \ref{SatFaibleEquiLax}. Par conséquent, $\DeuxFoncTrancheLax{u}{c}$ est une équivalence faible si et seulement si $\DeuxFoncTrancheLax{\TildeLax{u}}{c}$ en est une, en vertu de ces mêmes remarques. La proposition \ref{TheoremeATrancheLax} permet de conclure.
\end{proof}

\begin{rem}\label{ThAPlusFort}
Le résultat plus général \cite[théorème 6.6]{ArticleThAMoi}, que nous avons déjà mentionné, reste valable pour un localisateur fondamental arbitraire de $\DeuxCat$. Plus précisément, soit
$$
\xymatrix{
\mathdeuxcat{A}  
\ar[rr]^{u} 
\ar[dr]_{w} 
&&\mathdeuxcat{B}
\dtwocell<\omit>{<7.3>\sigma} 
\ar[dl]^{v}
\\ 
& 
\mathdeuxcat{C}
&{}
}
$$
un diagramme dans lequel $u$, $v$ et $w$ sont des \DeuxFoncteursLax{} et $\sigma$ est une \DeuxTransformationCoLax{}. Supposons que, pour tout objet $c$ de $\mathdeuxcat{C}$, le \DeuxFoncteurLax{} 
$$
\DeuxFoncTrancheLaxCoq{u}{\sigma}{c} : \TrancheLax{\mathdeuxcat{A}}{w}{c} \to \TrancheLax{\mathdeuxcat{B}}{v}{c}
$$ 
induit par ces données soit une équivalence faible. Alors $u$ est une équivalence faible. La preuve est l'exact analogue de celle de \cite[théorème 6.6]{ArticleThAMoi}. On peut bien entendu énoncer trois versions duales de ce résultat. 
\end{rem}

\emph{On ne suppose plus fixé de localisateur fondamental de $\DeuxCat$.}

\begin{df}
On dira qu'une classe $\DeuxLocFondLax{W}$ de \DeuxFoncteursLax{} est un \emph{\ClasseDeuxLocFondLax{}} si les conditions suivantes sont vérifiées.
\begin{itemize}
\item[$\rm{LF1}_{lax}$] La classe $\DeuxLocFondLax{W}$ est faiblement saturée. 

\item[$\rm{LF2}_{lax}$] Si une petite \deux{}catégorie $\mathdeuxcat{A}$ admet un objet admettant un objet initial, alors le morphisme canonique $\mathdeuxcat{A} \to e$ est dans $\DeuxLocFondLax{W}$.

\item[$\rm{LF3}_{lax}$] Si 
$$
\xymatrix{
\mathdeuxcat{A} 
\ar[rr]^{u}
\ar[dr]_{w}
&&\mathdeuxcat{B}{}
\ar[dl]^{v}
\\
&C
}
$$
désigne un triangle commutatif dans $\DeuxCatLax$ et si, pour tout objet $c$ de $\mathdeuxcat{C}$, le \DeuxFoncteurLax{} 
$
\DeuxFoncTrancheLax{u}{c} : \TrancheLax{\mathdeuxcat{A}}{w}{c} \to \TrancheLax{\mathdeuxcat{B}}{v}{c} 
$
est dans $\DeuxLocFondLax{W}$, alors $u$ est dans $\DeuxLocFondLax{W}$.
\end{itemize}
\end{df}

\begin{rem}\label{RemDeuxLocFondLaxInterCat}
Si $\DeuxLocFondLax{W}$ est un \ClasseDeuxLocFondLax{}, alors $\DeuxLocFondLax{W} \cap \UnCell{\DeuxCat}$ est un \ClasseDeuxLocFond{} et $\DeuxLocFondLax{W} \cap \UnCell{\Cat}$ est un \ClasseUnLocFond{}. 
\end{rem}

\begin{prop}\label{DeuxLocFondInduitLax}
Si $\DeuxLocFond{W}$ est un \ClasseDeuxLocFond{}, alors $\DeuxLocFond{W}_{lax}$ est un \ClasseDeuxLocFondLax{}{}.
\end{prop}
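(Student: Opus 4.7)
The plan is to verify the three axioms $\rm{LF1}_{lax}$, $\rm{LF2}_{lax}$, $\rm{LF3}_{lax}$ in turn, each reducing to a result already established in the excerpt, so that the proof is essentially a matter of citation rather than fresh argument.

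For $\rm{LF1}_{lax}$, I would simply invoke Remark \ref{SatFaibleEquiLax}: since $\TildeLax{-}$ is a functor from $\DeuxCatLax$ to $\DeuxCat$ and $\DeuxLocFond{W}$ is faiblement saturée (axiom LF1 for $\DeuxLocFond{W}$), the preimage $\DeuxLocFondLaxInduit{W} = \TildeLax{(-)}^{-1}(\DeuxLocFond{W})$ is faiblement saturée as well. There is nothing more to say.

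For $\rm{LF2}_{lax}$, let $\mathdeuxcat{A}$ be a petite \deux{}catégorie admettant un objet admettant un objet initial. The canonical morphism $\mathdeuxcat{A} \to \DeuxCatPonct$ is a \DeuxFoncteurStrict{}, hence by Remark \ref{EquiStricteEquiLax} it belongs to $\DeuxLocFondLaxInduit{W}$ if and only if it belongs to $\DeuxLocFond{W}$. But the hypothesis on $\mathdeuxcat{A}$ combined with Corollaire \ref{OFAspherique3} gives that $\mathdeuxcat{A}$ est $\DeuxLocFond{W}$\nobreakdash-asphérique, i.e.\ the morphism in question lies in $\DeuxLocFond{W}$.

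For $\rm{LF3}_{lax}$, this is precisely the content of Théorème \ref{TheoremeALaxTrancheLax}: given a triangle commutatif de \DeuxFoncteursLax{} such that every $\DeuxFoncTrancheLax{u}{c}$ is une équivalence faible lax, one concludes that $u$ est une équivalence faible lax. So no additional work is required here either. The only ``obstacle'' would have been this step, but since Théorème \ref{TheoremeALaxTrancheLax} has already been proved (using Proposition \ref{TheoremeATrancheLax} and the $\TildeLax{-}$/$\StrictCanonique{}$/$\LaxCanonique{}$ machinery), the verification of all three axioms is immediate and the proposition follows.
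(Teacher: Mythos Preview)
Your proof is correct and follows exactly the same route as the paper: Remark~\ref{SatFaibleEquiLax} for $\rm{LF1}_{lax}$, Remark~\ref{EquiStricteEquiLax} together with Corollaire~\ref{OFAspherique3} for $\rm{LF2}_{lax}$, and Th\'eor\`eme~\ref{TheoremeALaxTrancheLax} for $\rm{LF3}_{lax}$.
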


\begin{proof}
En vertu de la remarque \ref{SatFaibleEquiLax}, la classe $\DeuxLocFond{W}_{lax}$ vérifie la condition $\rm{LF1}_{lax}$.

Montrons que $\DeuxLocFond{W}_{lax}$ vérifie $\rm{LF2}_{lax}$. Soit $\mathdeuxcat{A}$ une petite \deux{}catégorie admettant un objet admettant un objet initial. Il s'agit de montrer que le morphisme canonique $\mathdeuxcat{A} \to e$ est dans $\DeuxLocFond{W}_{lax}$. Comme c'est un \DeuxFoncteurStrict{}, il est dans $\DeuxLocFond{W}$ si et seulement s'il est dans $\DeuxLocFond{W}_{lax}$ en vertu de la remarque \ref{EquiStricteEquiLax}. Comme il est dans $\DeuxLocFond{W}$ en vertu du corollaire \ref{OFAspherique3}, il est dans $\DeuxLocFond{W}_{lax}$.

La propriété $\rm{LF3}_{lax}$ résulte du théorème \ref{TheoremeALaxTrancheLax}.
\end{proof}

\begin{lemme}\label{Baba}
Si $\DeuxLocFondLax{W}$ est un \ClasseDeuxLocFondLax{}, alors, pour toute
petite \deux{}caté\-gorie $\mathdeuxcat{A}$, $\StrictCanonique{\mathdeuxcat{A}}$ et $\LaxCanonique{\mathdeuxcat{A}}$ sont dans $\DeuxLocFondLax{W}$.
\end{lemme}

\begin{proof}
Comme la classe $\DeuxLocFondLax{W} \cap \UnCell{\DeuxCat}$ est un \ClasseDeuxLocFond{}, on a $\StrictCanonique{\mathdeuxcat{A}} \in \DeuxLocFondLax{W} \cap \UnCell{\DeuxCat}$ en vertu de la proposition \ref{CouniteColaxAspherique}, donc en particulier $\StrictCanonique{\mathdeuxcat{A}} \in \DeuxLocFondLax{W}$. On en déduit $\LaxCanonique{\mathdeuxcat{A}} \in \DeuxLocFondLax{W}$ en vertu de l'égalité $\StrictCanonique{\mathdeuxcat{A}} \LaxCanonique{\mathdeuxcat{A}} = 1_{\mathdeuxcat{A}}$ et de la condition $\rm{LF1}_{lax}$.
\end{proof}

\begin{lemme}\label{Bibi}
Soit $\DeuxLocFondLax{W}$ un \ClasseDeuxLocFondLax{}. Un morphisme $u : \mathdeuxcat{A} \to \mathdeuxcat{B}$ de $\DeuxCatLax$ est dans $\DeuxLocFondLax{W}$ si et seulement si $\TildeLax{u}$ l'est. 
\end{lemme}

\begin{proof}
Cela résulte de l'égalité $\LaxCanonique{\mathdeuxcat{B}} u = \TildeLax{u} \LaxCanonique{\mathdeuxcat{A}}$, du lemme \ref{Baba} et de la condition $\rm{LF1}_{lax}$.
\end{proof}

\begin{df}
Pour toute classe $\DeuxLocFondLax{W}$ de morphismes de $\DeuxCat$, on notera 
$$
\DeuxLocFondLaxInduit{W} = \{ u \in \UnCell{\DeuxCatLax} \vert \TildeLax{u} \in \DeuxLocFondLax{W} \}.
$$ 
\end{df}

\begin{theo}\label{IsoDeuxLocFondDeuxLocFondLax}
Les applications 
$$
\begin{aligned}
\mathcal{P} (\UnCell{\DeuxCat}) &\to \mathcal{P} (\UnCell{\DeuxCatLax})
\\
\DeuxLocFond{W} &\mapsto \DeuxLocFond{W}_{lax}
\end{aligned}
$$
et
$$
\begin{aligned}
\mathcal{P} (\UnCell{\DeuxCatLax}) &\to \mathcal{P} (\UnCell{\DeuxCat})
\\
\DeuxLocFond{W} &\mapsto \DeuxLocFond{W} \cap \UnCell{\DeuxCat}
\end{aligned}
$$
induisent des isomorphismes inverses l'un de l'autre entre la classe des \ClasseDeuxLocFondS{} ordonnée par inclusion et la classe des \ClasseDeuxLocFondLaxS{} ordonnée par inclusion. De plus, pour tout localisateur fondamental $\DeuxLocFond{W}$ de $\DeuxCat$, les catégories localisées $\Localisation{\DeuxCat}{\DeuxLocFond{W}}$ et $\Localisation{\DeuxCatLax}{\DeuxLocFondLaxInduit{W}}$ sont équivalentes et, pour tout localisateur fondamental $\DeuxLocFondLax{W}$ de $\DeuxCatLax$, les catégories localisées $\Localisation{\DeuxCatLax}{\DeuxLocFondLax{W}}$ et $\Localisation{\DeuxCat}{(\DeuxLocFondLax{W} \cap \UnCell{\DeuxCat})}$ sont équivalentes, ces équivalences de catégories localisées étant induites par l'inclusion $\DeuxCat \hookrightarrow \DeuxCatLax$.
\end{theo}

\begin{proof}
Ces applications respectent manifestement la relation d'inclusion. Il s'agit de vérifier que, pour tout localisateur fondamental $\DeuxLocFond{W}$ de $\DeuxCat$, 
$$
\DeuxLocFond{W}_{lax} \cap \UnCell{\DeuxCat} = \DeuxLocFond{W}
$$ 
et que, pour tout localisateur fondamental $\DeuxLocFondLax{W}$ de $\DeuxCatLax$, 
$$
\DeuxLocFondLax{W} = (\DeuxLocFondLax{W} \cap \UnCell{\DeuxCat})_{lax}.
$$

Soit donc $\DeuxLocFond{W}$ un \ClasseDeuxLocFond{}. Un \DeuxFoncteurStrict{} est dans $\DeuxLocFondLaxInduit{W} \cap \UnCell{\DeuxCat}$ si et seulement s'il est dans $\DeuxLocFondLaxInduit{W}$, donc si et seulement s'il est dans $\DeuxLocFond{W}$ (remarque \ref{EquiStricteEquiLax}), ce qui montre l'égalité $\DeuxLocFond{W}_{lax} \cap \UnCell{\DeuxCat} = \DeuxLocFond{W}$. 

Soit $\DeuxLocFondLax{W}$ un \ClasseDeuxLocFondLax{}. Un \DeuxFoncteurLax{} $u$ est dans $(\DeuxLocFondLax{W} \cap \UnCell{\DeuxCat})_{lax}$ si et seulement si $\TildeLax{u}$ est dans $\DeuxLocFondLax{W} \cap \UnCell{\DeuxCat}$, donc si et seulement si $\TildeLax{u}$ est dans $\DeuxLocFondLax{W}$, donc si et seulement si $u$ est dans $\DeuxLocFondLax{W}$ (lemme \ref{Bibi}). Cela montre l'égalité $\DeuxLocFondLax{W} = (\DeuxLocFondLax{W} \cap \UnCell{\DeuxCat})_{lax}$. 

La dernière assertion de l'énoncé se déduit du théorème \ref{EqCatLocDeuxCatDeuxCatLax}. 
\end{proof}

Le lemme \ref{SupLaxNaturel} se vérifie sans difficulté. 

\begin{lemme}\label{SupLaxNaturel}
Pour tout morphisme $u : \mathdeuxcat{A} \to \mathdeuxcat{B}$ de $\DeuxCatLax$, le diagramme
$$
\xymatrix{
\Delta/\NerfLax\mathdeuxcat{A}
\ar[rr]^{\SupLaxObjet{\mathdeuxcat{A}}}
\ar[d]_{\Delta/\NerfLax{(u)}}
&&\mathdeuxcat{A}
\ar[d]^{u}
\\
\Delta/\NerfLax\mathdeuxcat{B}
\ar[rr]_{\SupLaxObjet{\mathdeuxcat{B}}}
&&\mathdeuxcat{B}
}
$$
est commutatif.
\end{lemme}

\begin{prop}[Del Hoyo]\label{LemmeDelHoyo}
Pour toute petite \deux{}catégorie $\mathdeuxcat{A}$, pour tout objet $a$ de $\mathdeuxcat{A}$, la catégorie $\TrancheLax{(\DeuxIntOp{\Delta} \NerfHom \mathdeuxcat{A})}{\SupHom_{\mathdeuxcat{A}}}{a}$ est $\UnLocFondMin$\nobreakdash-asphérique.
\end{prop}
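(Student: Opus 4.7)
Une remarque pr\'ealable: puisque $\DeuxIntOp{\Delta}\NerfHom\mathdeuxcat{A}$ s'obtient par int\'egration d'un foncteur \`a valeurs dans $\Cat$ sur la $1$\nobreakdash-cat\'egorie $\Delta$, c'est bien une $1$\nobreakdash-cat\'egorie, ainsi que la tranche consid\'er\'ee. Le plan est alors d'imiter la strat\'egie de la d\'emonstration du lemme \ref{OFAspherique1}: on construira un endofoncteur $D$ prolongeant chaque diagramme par la donn\'ee d'une \un{}cellule au sommet, un endofoncteur constant $Z$, et deux transformations naturelles $\iota : 1 \Rightarrow D$ et $\omega : Z \Rightarrow D$. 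Le zigzag ainsi obtenu forcera $Z$ \`a \^etre une $\UnLocFondMin$\nobreakdash-\'equivalence faible, et la remarque \ref{EndoConstantW} permettra alors de conclure.

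Concr\`etement, un objet de la tranche est un triplet $(([m], x), p : x_m \to a)$. On d\'efinit
$$
D(([m], x), p) = (([m+1], x'), 1_a),
$$
o\`u $x' : [m+1] \to \mathdeuxcat{A}$ est le \DeuxFoncteurStrict{} prolongeant $x$ en posant $x'_{m+1} = a$ et $x'_{m+1, m} = p$ (les autres $x'_{m+1, i}$ \'etant impos\'es par la stricture). Sur une \un{}cellule $((\varphi, \alpha), \beta) : (([m], x), p) \to (([n], y), q)$ de la tranche, on pose $D((\varphi, \alpha), \beta) = ((D(\varphi), D(\alpha)), 1_{1_a})$, o\`u $D(\varphi) : [m+1] \to [n+1]$ prolonge $\varphi$ par $m+1 \mapsto n+1$ et o\`u la famille $D(\alpha)$ prolonge $\alpha$ en prenant $\beta$ pour composante d'indice $m+1$ (qui va bien de $x'_{m+1, m} = p$ vers $y'_{n+1, \varphi(m)} = q \cdot y_{n, \varphi(m)}$). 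On d\'efinit $Z$ comme l'endofoncteur constant de valeur $(([0], a), 1_a)$. La transformation $\iota : 1 \Rightarrow D$ est donn\'ee, en chaque objet, par l'inclusion $[m] \hookrightarrow [m+1]$ munie de la famille identit\'e et de la \deux{}cellule $1_p$ au niveau de la tranche; la transformation $\omega : Z \Rightarrow D$ est donn\'ee par le morphisme $[0] \to [m+1]$, $0 \mapsto m+1$, avec famille vide et \deux{}cellule de tranche $1_{1_a}$.

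Le principal obstacle est d'ordre combinatoire: la v\'erification de la fonctorialit\'e de $D$ et de la naturalit\'e de $\iota$ et $\omega$ demande un examen soigneux de la loi de composition de $\DeuxIntOp{\Delta}\NerfHom\mathdeuxcat{A}$ (qui combine composition simpliciale et composition de familles de \deux{}cellules) ainsi que de la \deux{}cellule structurale de composition du \DeuxFoncteurLax{} $\SupHom_{\mathdeuxcat{A}}$, qui intervient dans le calcul de la \deux{}cellule de tranche d'un compos\'e. Ces v\'erifications \'etant purement m\'ecaniques, le zigzag $1 \Rightarrow D \Leftarrow Z$ fournit, apr\`es passage au nerf, la cha\^ine d'\'egalit\'es $[N(1)] = [N(D)] = [N(Z)]$ dans $\Ho(\EnsSimp)$; donc $N(Z)$ est \'egal \`a l'identit\'e dans la cat\'egorie homotopique, i.e.\ $Z$ est une $\UnLocFondMin$\nobreakdash-\'equivalence faible. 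La remarque \ref{EndoConstantW}, appliqu\'ee \`a l'endofoncteur constant $Z$, conclut alors que la tranche est $\UnLocFondMin$\nobreakdash-asph\'erique.
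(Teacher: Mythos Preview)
Your proposal is correct and follows the same approach as the references the paper cites (Del Hoyo's thesis and article): the paper itself gives no argument beyond pointing to those references, and what you sketch is precisely the ``cone extension'' contraction that Del Hoyo carries out, which is the natural generalisation of the argument of Lemma~\ref{OFAspherique1} to this richer slice. The combinatorial verifications you flag as mechanical (functoriality of $D$, naturality of $\iota$ and $\omega$, compatibility with the lax structural $2$-cells of $\SupHom_{\mathdeuxcat{A}}$) are indeed routine once one unwinds the composition law in $\DeuxIntOp{\Delta}\NerfHom\mathdeuxcat{A}$ and the formula for $\DeuxCellStructComp{\SupHom_{\mathdeuxcat{A}}}{(\psi,\beta)}{(\varphi,\alpha)}$; Del Hoyo spells these out explicitly.
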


\begin{proof}
Voir la démonstration de \cite[théorème 9.2.4]{TheseDelHoyo} ou \cite[théorème 7.3]{ArticleDelHoyo}.
\end{proof}

\begin{prop}\label{SupHomAspherique}
Pour tout \ClasseDeuxLocFond{}, pour toute petite \deux{}catégorie $\mathdeuxcat{A}$, le \DeuxFoncteurLax{} normalisé
$$
\SupHom_{\mathdeuxcat{A}} : \DeuxIntOp{\Delta} \NerfHom \mathdeuxcat{A} \to \mathdeuxcat{A}
$$
est lax\nobreakdash-asphérique (donc en particulier une équivalence faible). 
\end{prop}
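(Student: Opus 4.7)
Le plan est de v\'erifier que, pour tout objet $a$ de $\mathdeuxcat{A}$, le \DeuxFoncteurLax{} induit sur les tranches
$$
\DeuxFoncTrancheLax{\SupHom_{\mathdeuxcat{A}}}{a} : \TrancheLax{(\DeuxIntOp{\Delta} \NerfHom \mathdeuxcat{A})}{\SupHom_{\mathdeuxcat{A}}}{a} \longrightarrow \TrancheLax{\mathdeuxcat{A}}{}{a}
$$
est une \'equivalence faible, ce qui est pr\'ecis\'ement la lax\nobreakdash-asph\'ericit\'e de $\SupHom_{\mathdeuxcat{A}}$. La seconde assertion de l'\'enonc\'e r\'esultera alors du th\'eor\`eme \ref{TheoremeALaxTrancheLax} appliqu\'e avec $\mathdeuxcat{C} = \mathdeuxcat{A}$ et $v = 1_{\mathdeuxcat{A}}$, combin\'e \`a la proposition \ref{DeuxLocFondInduitLax}, qui garantit que $\DeuxLocFondLaxInduit{W}$ est un \ClasseDeuxLocFondLax{} et l\'egitime ainsi l'application de ce th\'eor\`eme au \DeuxFoncteurLax{} $\SupHom_{\mathdeuxcat{A}}$.

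Pour \'etablir cette lax\nobreakdash-asph\'ericit\'e, il suffit de montrer que la source et le but de $\DeuxFoncTrancheLax{\SupHom_{\mathdeuxcat{A}}}{a}$ sont tous deux asph\'eriques, la conclusion r\'esultant alors d'un argument de \guillemotleft{}$2$ sur $3$\guillemotright{} appliqu\'e au triangle form\'e par les morphismes canoniques vers $\DeuxCatPonct$. Le but $\TrancheLax{\mathdeuxcat{A}}{}{a}$ admet un objet admettant un objet initial (exemple \ref{ExemplesOF}) et est donc asph\'erique par le corollaire \ref{OFAspherique3}. La source $\TrancheLax{(\DeuxIntOp{\Delta} \NerfHom \mathdeuxcat{A})}{\SupHom_{\mathdeuxcat{A}}}{a}$ est en fait une \un{}cat\'egorie : la \deux{}cat\'egorie $\DeuxIntOp{\Delta} \NerfHom \mathdeuxcat{A}$ n'admet que des \deux{}cellules triviales, puisque celles de $\Delta$ le sont et que $\NerfHom \mathdeuxcat{A}$ prend ses valeurs dans $\Cat$ vu comme sous-cat\'egorie de $\DeuxCatDeuxCat$~; la proposition \ref{LemmeDelHoyo} de Del Hoyo fournit alors sa $\UnLocFondMin$\nobreakdash-asph\'ericit\'e, que le lemme \ref{Reynaldo} rel\`eve en $\DeuxLocFond{W}$\nobreakdash-asph\'ericit\'e pour tout localisateur fondamental $\DeuxLocFond{W}$ de $\DeuxCat$.

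Le c\oe{}ur technique de la preuve se trouve ainsi absorb\'e dans la proposition \ref{LemmeDelHoyo} de Del Hoyo ; une fois celle-ci admise, le passage d'un localisateur fondamental particulier \`a un localisateur arbitraire ne constitue pas d'obstacle suppl\'ementaire, se r\'eduisant imm\'ediatement \`a l'application du lemme \ref{Reynaldo}, lui-m\^eme cons\'equence du th\'eor\`eme \ref{CisinskiGrothendieck} de minimalit\'e de $\UnLocFondMin$ d\^u \`a Cisinski.
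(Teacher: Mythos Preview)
Your proof is correct and follows essentially the same approach as the paper's own proof, which simply states that the result is an immediate consequence of proposition~\ref{LemmeDelHoyo} (Del Hoyo) and lemma~\ref{Reynaldo}. You have merely spelled out the implicit $2$-sur-$3$ argument (asphericity of the target slice via example~\ref{ExemplesOF} and corollary~\ref{OFAspherique3}) and made explicit why the source slice is a $1$-category, so that lemma~\ref{Reynaldo} applies directly; the core content is identical.
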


\begin{proof}
C'est une conséquence immédiate de la proposition \ref{LemmeDelHoyo} et du lemme \ref{Reynaldo}.
\end{proof}


\begin{prop}\label{SupLaxW}
Pour tout localisateur fondamental de $\DeuxCatLax$, pour toute petite \deux{}catégorie $\mathdeuxcat{A}$, le \DeuxFoncteurLax{} $\SupLaxObjet{\mathdeuxcat{A}} : \Delta/ \NerfLax\mathdeuxcat{A} \to \mathdeuxcat{A}$ est une équivalence faible. 
\end{prop}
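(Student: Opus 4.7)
The plan is to reduce the statement to Proposition~\ref{SupHomAspherique}, which already handles the analogous assertion for the strict nerf $\NerfHom$, and to transport the conclusion to $\NerfLax$ by chasing the commutative diagram of Lemma~\ref{DiagrammeSups}.

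First, fixing a \ClasseDeuxLocFondLax{} $\DeuxLocFondLax{W}$, I would set $\DeuxLocFond{W} := \DeuxLocFondLax{W} \cap \UnCell{\DeuxCat}$. By Remark~\ref{RemDeuxLocFondLaxInterCat} this is a \ClasseDeuxLocFond{}, and Theorem~\ref{IsoDeuxLocFondDeuxLocFondLax} identifies $\DeuxLocFondLax{W}$ with $\DeuxLocFond{W}_{lax}$. Similarly $\DeuxLocFondLax{W} \cap \UnCell{\Cat}$ is a \ClasseUnLocFond{}, hence contains $\UnLocFondMin$ by Cisinski's Theorem~\ref{CisinskiGrothendieck}; so every $\UnLocFondMin$\nobreakdash-equivalence is a $\DeuxLocFondLax{W}$\nobreakdash-equivalence. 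This puts the three Carrasco--Cegarra--Garz\'on inclusions of Proposition~\ref{ResultatCCG} into $\DeuxLocFondLax{W}$, and Proposition~\ref{SupHomAspherique} applied to $\DeuxLocFond{W}$ places $\SupHomObjet{\mathdeuxcat{A}}$ in $\DeuxLocFond{W}_{lax} = \DeuxLocFondLax{W}$ as well.

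Three successive applications of the two-out-of-three property (built into weak saturation) then finish the argument, using from Lemma~\ref{DiagrammeSups} the factorizations $\SupHomObjet{\mathdeuxcat{A}} = \SupCatLaxNorObjet{\mathdeuxcat{A}} \circ i_{hom}^{\underline{l,n}}\mathdeuxcat{A}$, $\SupLaxNorObjet{\mathdeuxcat{A}} = \SupCatLaxNorObjet{\mathdeuxcat{A}} \circ i_{l,n}^{\underline{l,n}}\mathdeuxcat{A}$ and $\SupLaxNorObjet{\mathdeuxcat{A}} = \SupLaxObjet{\mathdeuxcat{A}} \circ i_{l,n}^{l}\mathdeuxcat{A}$, applied in that order so as to conclude first for $\SupCatLaxNorObjet{\mathdeuxcat{A}}$, then for $\SupLaxNorObjet{\mathdeuxcat{A}}$, and finally for $\SupLaxObjet{\mathdeuxcat{A}}$. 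No real obstacle is expected: the entire analytic content lies in Proposition~\ref{SupHomAspherique} (which rests on Del Hoyo's Proposition~\ref{LemmeDelHoyo}) and in the CCG comparison, both already available; the only mildly delicate point is keeping the directions of the three CCG inclusions aligned with 2-out-of-3, which Lemma~\ref{DiagrammeSups} records explicitly.
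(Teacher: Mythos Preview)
Your proposal is correct and follows essentially the same approach as the paper, which simply cites Lemma~\ref{DiagrammeSups}, Proposition~\ref{ResultatCCG}, Remark~\ref{SatFaibleEquiLax} and Proposition~\ref{SupHomAspherique}. You are merely more explicit than the paper in invoking Theorem~\ref{IsoDeuxLocFondDeuxLocFondLax} (or equivalently Lemma~\ref{Bibi}) to identify the given $\DeuxLocFondLax{W}$ with $(\DeuxLocFondLax{W}\cap\UnCell{\DeuxCat})_{lax}$ so that Proposition~\ref{SupHomAspherique} applies, and in spelling out the three-step two-out-of-three chase.
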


\begin{proof}
C'est une conséquence du lemme \ref{DiagrammeSups}, de la proposition \ref{ResultatCCG}, de la remarque \ref{SatFaibleEquiLax} et de la proposition \ref{SupHomAspherique}.
\end{proof}

\begin{theo}\label{EqCatLocCatDeuxCatLax}
Pour tout localisateur fondamental $\DeuxLocFondLax{W}$ de $\DeuxCatLax$, l'inclusion $\Cat \hookrightarrow \DeuxCatLax$ induit une équivalence de catégories localisées
$$
\Localisation{\DeuxCatLax}{\DeuxLocFondLax{W}} \simeq \Localisation{\Cat}{(\DeuxLocFond{W} \cap \UnCell{\Cat})}.
$$
\end{theo}

\begin{proof}
Cela résulte directement de la proposition \ref{SupLaxW}, de la remarque \ref{Proust} et de la proposition \ref{UnSupAspherique}.
\end{proof}

%

\begin{lemme}\label{EquiDefEquiLax}
Pour tout localisateur fondamental $\UnLocFond{W}$ de $\Cat$, pour tout morphisme $u$ de $\DeuxCatLax$, $\Delta/\NerfLax(\TildeLax{u})$ est dans $\UnLocFond{W}$ si et seulement si $\Delta/\NerfLax(u)$ l'est.
\end{lemme}

\begin{proof}
Soient $\UnLocFond{W}$ un \ClasseUnLocFond{} et $u : \mathdeuxcat{A} \to \mathdeuxcat{B}$ un morphisme de $\DeuxCatLax$. Les \DeuxFoncteursStricts{} $\StrictCanonique{\mathdeuxcat{A}}$ et $\StrictCanonique{\mathdeuxcat{B}}$ sont des équivalences faibles pour tout \ClasseDeuxLocFond{} (proposition \ref{CouniteColaxAspherique}). Comme $\NerfLaxNor^{-1} (i_{\Delta}^{-1} (W))$ est un \ClasseDeuxLocFond{}, $\Delta/\NerfLaxNor(\StrictCanonique{\mathdeuxcat{A}})$ et $\Delta/\NerfLaxNor(\StrictCanonique{\mathdeuxcat{B}})$ sont dans $\UnLocFond{W}$. Il en est donc de même de $\Delta/\NerfLax(\StrictCanonique{\mathdeuxcat{A}})$ et $\Delta/\NerfLax(\StrictCanonique{\mathdeuxcat{B}})$ (remarque \ref{Nini}). La saturation faible de $\UnLocFond{W}$ permet d'en déduire que les sections $\Delta/\NerfLax(\LaxCanonique{\mathdeuxcat{A}})$ et $\Delta/\NerfLax(\LaxCanonique{\mathdeuxcat{B}})$ sont dans $\UnLocFond{W}$. On conclut par un argument de 2 sur 3 après avoir appliqué le foncteur $i_{\Delta} \NerfLax$ au diagramme commutatif
\[
\xymatrix{
\TildeLax{\mathdeuxcat{A}}
\ar[r]^{\TildeLax{u}}
&\TildeLax{\mathdeuxcat{B}}
\\
\mathdeuxcat{A}
\ar[u]^{\LaxCanonique{\mathdeuxcat{A}}}
\ar[r]_{u}
&\mathdeuxcat{B}
\ar[u]_{\LaxCanonique{\mathdeuxcat{B}}}
&.
}
\]
\end{proof}

\begin{lemme}\label{Gabuzomeu}
Pour tout localisateur fondamental $\DeuxLocFondLax{W}$ de $\DeuxCatLax$, un morphisme $u$ de $\DeuxCatLax$ est une équivalence faible si et seulement si $\Delta / \NerfLax{(u)}$ en est une. 
\end{lemme}

\begin{proof}
C'est une conséquence immédiate du lemme \ref{SupLaxNaturel} et de la proposition \ref{SupLaxW}.
\end{proof}

\begin{lemme}\label{Gaston}
Pour tout localisateur fondamental $\UnLocFond{W}$ de $\Cat$,
$$
{\NerfLax}^{-1} (i_{\Delta}^{-1} (\UnLocFond{W})) = ({\NerfLaxNor}^{-1} (i_{\Delta}^{-1} (\UnLocFond{W})))_{lax}.
$$
\end{lemme}

\begin{proof}
Cela résulte de la suite d'équivalences suivante, pour tout morphisme $u$ de $\DeuxCatLax$.
\begin{align*}
u \in {\NerfLax}^{-1} (i_{\Delta}^{-1} (\UnLocFond{W})) &\Longleftrightarrow \Delta / \NerfLax{(u)} \in \UnLocFond{W}
\\
&\Longleftrightarrow \Delta / \NerfLax{(\TildeLax{u})} \in \UnLocFond{W} \phantom{bla} \text{(lemme \ref{EquiDefEquiLax})}
\\
&\Longleftrightarrow \Delta / \NerfLaxNor{(\TildeLax{u})} \in \UnLocFond{W} \phantom{bla} \text{(remarque \ref{Nini})}
\\
&\Longleftrightarrow \TildeLax{u} \in {\NerfLaxNor}^{-1} (i_{\Delta}^{-1} (\UnLocFond{W}))
\\
&\Longleftrightarrow u \in ({\NerfLaxNor}^{-1} (i_{\Delta}^{-1} (\UnLocFond{W})))_{lax}
\qedhere
\end{align*}
\end{proof}

\begin{lemme}\label{Leroux}
Pour tout localisateur fondamental $\UnLocFond{W}$ de $\Cat$, la classe
$
{\NerfLax}^{-1} (i_{\Delta}^{-1} (\UnLocFond{W})) 
$
est un \ClasseDeuxLocFondLax{}.
\end{lemme}

\begin{proof}
Cela résulte du lemme \ref{Gaston}, de la remarque \ref{Levet} et de la proposition \ref{DeuxLocFondInduitLax}.
\end{proof}

\begin{lemme}\label{DeuxLocFondLaxInterCat}
Pour tout localisateur fondamental $\DeuxLocFondLax{W}$ de $\DeuxCatLax$,  
$$
\DeuxLocFondLax{W} = {\NerfLax}^{-1} (i_{\Delta}^{-1}(\DeuxLocFondLax{W} \cap \UnCell{\Cat})).
$$
\end{lemme}

\begin{proof}
Un \DeuxFoncteurLax{} $u$ est dans $\DeuxLocFondLax{W}$ si et seulement si le foncteur $\Delta / \NerfLax{(u)}$ l'est (lemme \ref{Gabuzomeu}), donc si et seulement si $\Delta / \NerfLax{(u)}$ est dans $\DeuxLocFondLax{W} \cap \UnCell{\Cat}$, donc si et seulement si $u$ est dans ${\NerfLax}^{-1}i_{\Delta}^{-1}(\DeuxLocFondLax{W} \cap \UnCell{\Cat})$. 
\end{proof}

\begin{lemme}\label{Chamfort}
Pour tout localisateur fondamental $\UnLocFond{W}$ de $\Cat$, 
$$
{\NerfLax}^{-1} (i_{\Delta}^{-1} (\UnLocFond{W})) \cap \UnCell{\Cat} =  \UnLocFond{W}.
$$
\end{lemme}

\begin{proof}
C'est une conséquence immédiate de la proposition \ref{Sade} et du fait que la restriction du nerf lax $\NerfLax$ à $\Cat$ coïncide avec le nerf $\UnNerf$. En formule : 
\[
{\NerfLax}^{-1} (i_{\Delta}^{-1} (\UnLocFond{W})) \cap \UnCell{\Cat} = {\UnNerf}^{-1} (i_{\Delta}^{-1} (\UnLocFond{W})) \cap \UnCell{\Cat} = \UnLocFond{W} \cap \UnCell{\Cat} = \UnLocFond{W}.
\qedhere
\]
\end{proof}

%

%

%

\begin{theo}\label{IsoUnLocFondDeuxLocFondLax}
Les applications
$$
\begin{aligned}
\mathcal{P}(\UnCell{\Cat}) &\to \mathcal{P}(\UnCell{\DeuxCatLax})
\\
\UnLocFond{W} &\mapsto {\NerfLax}^{-1}i_{\Delta}^{-1} \UnLocFond{W} 
\end{aligned}
$$
et
$$
\begin{aligned}
\mathcal{P}(\UnCell{\DeuxCatLax}) &\to \mathcal{P}(\UnCell{\Cat})
\\
\DeuxLocFondLax{W} &\mapsto \DeuxLocFondLax{W} \cap \UnCell{\Cat}
\end{aligned}
$$
induisent des isomorphismes inverses l'un de l'autre entre la classe ordonnée par inclusion des localisateurs fondamentaux de $\Cat$ et la classe ordonnée par inclusion des localisateurs fondamentaux de $\DeuxCatLax$. De plus, pour tout localisateur fondamental $\DeuxLocFond{W}$ de $\DeuxCatLax$, les catégories localisées 
$
\Localisation{\DeuxCatLax}{\DeuxLocFondLax{\DeuxLocFond{W}}}
$ 
et
$
\Localisation{\Cat}{(\DeuxLocFondLax{W} \cap \UnCell{\Cat})}
$
sont équivalentes et, pour tout localisateur fondamental $\UnLocFond{W}$ de $\Cat$, les catégories localisées 
$
\Localisation{\Cat}{\UnLocFond{W}}
$ 
et
$
\Localisation{\DeuxCatLax}{({\NerfLax}^{-1}i_{\Delta}^{-1} \UnLocFond{W})}
$
sont équivalentes, ces équivalences de catégories localisées étant induites par l'inclusion $\Cat \hookrightarrow \DeuxCatLax$.
\end{theo}

\begin{proof}
Ces applications respectant manifestement la relation d'inclusion, il résulte des lemmes \ref{Leroux}, \ref{DeuxLocFondLaxInterCat} et \ref{Chamfort} qu'il s'agit bien d'isomorphismes. La dernière assertion de l'énoncé se déduit du théorème \ref{EqCatLocCatDeuxCatLax}. 
\end{proof}

%

\begin{lemme}\label{Allain}
Pour tout localisateur fondamental $\DeuxLocFond{W}$ de $\DeuxCat$, 
$$
\DeuxLocFondLaxInduit{\DeuxLocFond{W}} \cap \UnCell{\Cat} = \DeuxLocFond{W}
\cap \UnCell{\Cat}.
$$
\end{lemme}

\begin{proof}
En vertu du théorème \ref{IsoDeuxLocFondDeuxLocFondLax}, $ \DeuxLocFondLaxInduit{\DeuxLocFond{W}} \cap \UnCell{\DeuxCat} = \DeuxLocFond{W}$. Ainsi,
\[
 \DeuxLocFondLaxInduit{\DeuxLocFond{W}} \cap \UnCell{\Cat} =
 (\DeuxLocFondLaxInduit{\DeuxLocFond{W}} \cap \UnCell{\DeuxCat}) \cap
 \UnCell{\Cat} = \DeuxLocFond{W} \cap \UnCell{\Cat}.
\qedhere
\]
\end{proof}

\begin{theo}\label{IsoUnLocFondDeuxLocFond}
Les applications
$$
\begin{aligned}
\mathcal{P}(\UnCell{\Cat}) &\to \mathcal{P}(\UnCell{\DeuxCat})
\\
\UnLocFond{W} &\mapsto {\NerfLaxNor}^{-1}i_{\Delta}^{-1} \UnLocFond{W}
\\
&\phantom{bla}(= {\NerfLax}^{-1}i_{\Delta}^{-1} \UnLocFond{W} \cap \UnCell{\DeuxCat})
\end{aligned}
$$
et
$$
\begin{aligned}
\mathcal{P}(\UnCell{\DeuxCat}) &\to \mathcal{P}(\UnCell{\Cat})
\\
\DeuxLocFond{W} &\mapsto \DeuxLocFond{W} \cap \UnCell{\Cat}
\end{aligned}
$$
induisent des isomorphismes inverses l'un de l'autre entre la classe ordonnée par inclusion des localisateurs fondamentaux de $\Cat$ et la classe ordonnée par inclusion des localisateurs fondamentaux de $\DeuxCat$.  De plus, pour tout localisateur fondamental $\DeuxLocFond{W}$ de $\DeuxCat$, les catégories localisées 
$
\Localisation{\DeuxCat}{\DeuxLocFond{W}}
$ 
et
$
\Localisation{\Cat}{(\DeuxLocFond{W} \cap \UnCell{\Cat})}
$
sont équivalentes et, pour tout localisateur fondamental $\UnLocFond{W}$ de $\Cat$, les catégories localisées 
$
\Localisation{\Cat}{\UnLocFond{W}}
$ 
et
$
\Localisation{\DeuxCat}{({\NerfLaxNor}^{-1}i_{\Delta}^{-1} \UnLocFond{W})}
$
sont équivalentes, ces équivalences de catégories localisées étant induites par l'inclusion $\Cat \hookrightarrow \DeuxCat$. 
\end{theo}

\begin{proof}
C'est une conséquence des théorèmes \ref{IsoDeuxLocFondDeuxLocFondLax} et \ref{IsoUnLocFondDeuxLocFondLax}, de la remarque \ref{Nini} et du lemme \ref{Allain}. 
\end{proof}

\begin{rem}
Pour démontrer le théorème \ref{IsoUnLocFondDeuxLocFond}, nous utilisons donc de façon cruciale le théorème \ref{IsoUnLocFondDeuxLocFondLax}. Cela reflète l'importance des morphismes lax en théorie de l'homotopie : les homotopies que l'on rencontre proviennent généralement de morphismes non stricts (voir par exemple \cite[définition 2.19]{ArticleThAMoi}). L'introduction de la notion de localisateur fondamental de $\DeuxCatLax$ ne devrait donc pas sembler mystérieuse. Indépendamment de son utilité dans la démonstration du théorème \ref{IsoUnLocFondDeuxLocFond}, elle s'est en fait imposée à nous comme totalement naturelle dès que nous avons pris connaissance de l'existence d'un analogue pour les \deux{}foncteurs lax du Théorème A de Quillen \cite{NotesDelHoyo}. Les isomorphismes que nous avons dégagés entre les classes des localisateurs fondamentaux de $\Cat$, de $\DeuxCat$ et de $\DeuxCatLax$ permettent de parler de \emph{localisateur fondamental}, sans préciser de catégorie « de base ».
\end{rem}

\begin{rem}
Pour tout localisateur fondamental, on a vu que les inclusions $\Cat \hookrightarrow \DeuxCat$, $\DeuxCat \hookrightarrow \DeuxCatLax$ et $\Cat \hookrightarrow \DeuxCatLax$ induisaient une équivalence de catégories entre les catégories homotopiques associées. Des inverses respectifs sont donnés par les foncteurs induits à ce niveau par les foncteurs $i_{\Delta} \NerfLaxNor : \DeuxCat \to \Cat$, $B : \DeuxCatLax \to \DeuxCat$ (le « foncteur de strictification de Bénabou ») et $i_{\Delta} \NerfLax : \DeuxCatLax \to \Cat$.
 \end{rem}

La notion de \ClasseDeuxLocFondLax{} est stable par intersection. On définit le \emph{localisateur fondamental minimal de $\DeuxCatLax$} comme l'intersection de tous les localisateurs fondamentaux de $\DeuxCatLax$. 

\begin{theo}\label{TheoDeuxLocFondLaxMin}
Le localisateur fondamental minimal de $\DeuxCatLax$ est la classe 
$$
\DeuxLocFondLaxMin{} = {\NerfLax}^{-1}\EquiQuillen.
$$
\end{theo}

\begin{proof}
C'est une conséquence immédiate du théorème \ref{IsoUnLocFondDeuxLocFondLax} et du théorème \ref{CisinskiGrothendieck}.
\end{proof}

La notion de \ClasseDeuxLocFond{} est stable par intersection. On définit le \emph{localisateur fondamental minimal de $\DeuxCat$} comme l'intersection de tous les \ClasseDeuxLocFondS{}.

\begin{theo}\label{TheoDeuxLocFondMin}
Le localisateur fondamental minimal de $\DeuxCat$ est la classe 
$$
\DeuxLocFondMin{} = {\NerfLax}^{-1}\EquiQuillen \cap \UnCell{\DeuxCat} = {\NerfLaxNor}^{-1}\EquiQuillen.
$$
\end{theo}
 
 \begin{proof}
C'est une conséquence immédiate du théorème \ref{IsoUnLocFondDeuxLocFond} et du théorème \ref{CisinskiGrothendieck}. 
 \end{proof}

On termine cette section par quelques énoncés permettant notamment d'assurer que les isomorphismes entre localisateurs fondamentaux de $\Cat$, $\DeuxCat$ et $\DeuxCatLax$ figurant dans l'énoncé des théorèmes \ref{IsoDeuxLocFondDeuxLocFondLax}, \ref{IsoUnLocFondDeuxLocFondLax} et \ref{IsoUnLocFondDeuxLocFond} préservent la propriété d'être engendré par un \emph{ensemble} de morphismes, détail d'importance lorsqu'il s'agit de montrer l'existence de structures de catégories de modèles sur $\Cat$ et $\DeuxCat$ dont la classe des équivalences faibles est donnée par un localisateur fondamental (\emph{cf.} \cite{Ara}). 

\begin{df}\label{def:loc_eng}
On dira qu'un \ClasseUnLocFond{} (resp. un \ClasseDeuxLocFond{}, resp. un \ClasseDeuxLocFondLax{}) est \emph{engendré} par une classe $S$ de morphismes de $\Cat$ (resp. de $\DeuxCat$, resp. de $\DeuxCatLax$) si c'est le plus petit \ClasseUnLocFond{} (resp. \ClasseDeuxLocFond{}, resp. \ClasseDeuxLocFondLax{}) contenant $S$ ou, autrement dit, l'intersection de tous les \ClassesUnLocFond{} (resp. \ClassesDeuxLocFond{}, resp. \ClassesDeuxLocFondLax{}) contenant $S$. 
\end{df}

\begin{prop}\label{Tatata}
Si un localisateur fondamental $\DeuxLocFond{W}$ de $\DeuxCat$ est engendré par une classe $S \subset \UnCell{\DeuxCat}$, alors le localisateur fondamental $\DeuxLocFondLaxInduit{W}$ de $\DeuxCatLax$ est également engendré par $S$. 
\end{prop}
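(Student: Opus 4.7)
The plan is to use the isomorphism from Theorem \ref{IsoDeuxLocFondDeuxLocFondLax} between the ordered classes of fundamental localizers of $\DeuxCat$ and $\DeuxCatLax$, which is order-preserving in both directions. Let $\DeuxLocFondLax{V}$ denote the fundamental localizer of $\DeuxCatLax$ generated by $S$ (using the inclusion $\DeuxCat \hookrightarrow \DeuxCatLax$ tacitly). We aim to show $\DeuxLocFondLax{V} = \DeuxLocFondLaxInduit{W}$, which by definition means that the localizer of $\DeuxCatLax$ generated by $S$ coincides with $\DeuxLocFondLaxInduit{W}$.

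For the inclusion $\DeuxLocFondLax{V} \subset \DeuxLocFondLaxInduit{W}$, I would note that $S \subset \DeuxLocFond{W}$ by assumption, and $\DeuxLocFond{W} \subset \DeuxLocFondLaxInduit{W}$ by Remark \ref{EquiStricteEquiLax} (a strict \deux{}foncteur is a \DeuxLocFond{W}\nobreakdash-equivalence if and only if it is a lax one). Hence $\DeuxLocFondLaxInduit{W}$ is a \ClasseDeuxLocFondLax{} (by Proposition \ref{DeuxLocFondInduitLax}) containing $S$, so by minimality $\DeuxLocFondLax{V} \subset \DeuxLocFondLaxInduit{W}$.

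For the reverse inclusion, I would consider $\DeuxLocFondLax{V} \cap \UnCell{\DeuxCat}$. By Remark \ref{RemDeuxLocFondLaxInterCat}, this is a \ClasseDeuxLocFond{}; it contains $S$ since $S \subset \UnCell{\DeuxCat} \cap \DeuxLocFondLax{V}$. By the minimality of $\DeuxLocFond{W}$ as a localizer of $\DeuxCat$ containing $S$, we obtain $\DeuxLocFond{W} \subset \DeuxLocFondLax{V} \cap \UnCell{\DeuxCat}$. Applying the order-preserving map $(\bullet)_{lax}$ of Theorem \ref{IsoDeuxLocFondDeuxLocFondLax} yields $\DeuxLocFondLaxInduit{W} \subset (\DeuxLocFondLax{V} \cap \UnCell{\DeuxCat})_{lax}$, and the same theorem identifies the right-hand side with $\DeuxLocFondLax{V}$ itself.

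There is no real obstacle: the proof is essentially a bookkeeping argument that exploits the bijective correspondence between the classes of fundamental localizers, the key input being that both operations $\DeuxLocFond{W} \mapsto \DeuxLocFondLaxInduit{W}$ and $\DeuxLocFondLax{V} \mapsto \DeuxLocFondLax{V} \cap \UnCell{\DeuxCat}$ are monotone and mutually inverse, a fact already established in Theorem \ref{IsoDeuxLocFondDeuxLocFondLax}. The only subtlety to keep in mind is to apply Remark \ref{EquiStricteEquiLax} so that $S$, originally a class of strict \deux{}foncteurs, is automatically contained in $\DeuxLocFondLaxInduit{W}$ when viewed inside $\UnCell{\DeuxCatLax}$.
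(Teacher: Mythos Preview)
Your proof is correct and follows essentially the same approach as the paper's: both use the order-preserving bijection of Theorem~\ref{IsoDeuxLocFondDeuxLocFondLax} to reduce the comparison of lax localizers to a comparison of the corresponding strict localizers, where the minimality of $\DeuxLocFond{W}$ applies. The paper presents the argument slightly more economically by taking an arbitrary $\DeuxLocFondLax{W}'$ containing $S$ and showing $\DeuxLocFondLaxInduit{W} \subset \DeuxLocFondLax{W}'$ directly, but your two-inclusion formulation is equivalent.
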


\begin{proof}
On a évidemment $S \subset \DeuxLocFondLaxInduit{W}$. Soit $\DeuxLocFond{W}'$ un localisateur fondamental de $\DeuxCatLax$ contenant $S$. En vertu du théorème \ref{IsoDeuxLocFondDeuxLocFondLax}, l'inclusion $\DeuxLocFondLaxInduit{W} \subset \DeuxLocFond{W}'$ équivaut à $\DeuxLocFondLaxInduit{W} \cap \UnCell{\DeuxCat} \subset \DeuxLocFond{W}' \cap \UnCell{\DeuxCat}$, c'est-à-dire, en vertu de ce même théorème, $\DeuxLocFond{W} \subset \DeuxLocFond{W}' \cap \UnCell{\DeuxCat}$. Cette inclusion résulte de l'hypothèse faite sur $\DeuxLocFond{W}$ et du fait que $\DeuxLocFond{W}' \cap \UnCell{\DeuxCat}$ est un \ClasseDeuxLocFond{} contenant $S$. 
\end{proof}

\begin{df}
Pour toute classe $S \subset \UnCell{\DeuxCatLax}$, on pose
$$
\widetilde{S} = \{ \TildeLax{u}, u \in S \}.
$$
\end{df}

\begin{lemme}\label{Blablabla}
Soit $\DeuxLocFondLax{W}$ un \ClasseDeuxLocFondLax{}. S'il est engendré par $S \subset \UnCell{\DeuxCatLax}$, alors il est engendré par $\widetilde{S}$. 
\end{lemme}

\begin{proof}
Pour tout $u : \mathdeuxcat{A} \to \mathdeuxcat{B}$ dans $S$, on a le diagramme commutatif
$$
\xymatrix{
\TildeLax{\mathdeuxcat{A}}
\ar[r]^{\TildeLax{u}}
&\TildeLax{\mathdeuxcat{B}}
\\
\mathdeuxcat{A}
\ar[u]^{\LaxCanonique{\mathdeuxcat{A}}}
\ar[r]_{u}
&\mathdeuxcat{B}
\ar[u]_{\LaxCanonique{\mathdeuxcat{B}}}
}
$$
dont les flèches verticales sont dans $\DeuxLocFondLax{W}$. Comme $u$ l'est aussi, c'est également le cas de $\TildeLax{u}$, ce qui montre l'inclusion $\widetilde{S} \subset \DeuxLocFondLax{W}$. Soit maintenant $\DeuxLocFondLax{W}'$ un \ClasseDeuxLocFondLax{} contenant $\widetilde{S}$. La considération du même diagramme, dont les flèches verticales sont dans $\DeuxLocFondLax{W}'$, permet d'affirmer $S \subset \DeuxLocFondLax{W}'$, donc $\DeuxLocFondLax{W} \subset \DeuxLocFondLax{W}'$.
\end{proof}

\begin{prop}\label{Tetete}
Soit $\DeuxLocFondLax{W}$ un \ClasseDeuxLocFondLax{}. S'il est engendré par $S \subset \UnCell{\DeuxCatLax}$, alors le localisateur fondamental $\DeuxLocFondLax{W} \cap \UnCell{\DeuxCat}$ de $\DeuxCat$ est engendré par $\widetilde{S}$. 
\end{prop}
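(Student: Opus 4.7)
The plan is to combine Lemma~\ref{Blablabla}, Proposition~\ref{Tatata}, and the isomorphism between fundamental localizers of $\DeuxCat$ and $\DeuxCatLax$ established in Theorem~\ref{IsoDeuxLocFondDeuxLocFondLax}. First I would invoke Lemma~\ref{Blablabla} to replace the generating set: since $\DeuxLocFondLax{W}$ is generated by $S$ as a \ClasseDeuxLocFondLax{}, it is equally generated by $\widetilde{S}$. The decisive gain of this reduction is that $\widetilde{S} \subset \UnCell{\DeuxCat}$, as each $\TildeLax{u}$ is by construction a \DeuxFoncteurStrict{}.

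Let $\DeuxLocFond{W}'$ denote the \ClasseDeuxLocFond{} generated by $\widetilde{S}$; the goal then becomes to prove $\DeuxLocFond{W}' = \DeuxLocFondLax{W} \cap \UnCell{\DeuxCat}$. The inclusion $\DeuxLocFond{W}' \subset \DeuxLocFondLax{W} \cap \UnCell{\DeuxCat}$ is immediate: by Remark~\ref{RemDeuxLocFondLaxInterCat}, the right\nbd hand side is a \ClasseDeuxLocFond{}, and it contains $\widetilde{S}$ since, for every $u \in S \subset \DeuxLocFondLax{W}$, Lemma~\ref{Bibi} gives $\TildeLax{u} \in \DeuxLocFondLax{W}$.

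For the reverse inclusion, I would apply Proposition~\ref{Tatata} to $\DeuxLocFond{W}'$: since this \ClasseDeuxLocFond{} is generated by $\widetilde{S}$, the associated localizer $(\DeuxLocFond{W}')_{lax}$ of $\DeuxCatLax$ is also generated by $\widetilde{S}$. Combined with the first step, both $(\DeuxLocFond{W}')_{lax}$ and $\DeuxLocFondLax{W}$ are \ClasseDeuxLocFondLaxS{} generated by the same class, whence $(\DeuxLocFond{W}')_{lax} = \DeuxLocFondLax{W}$. Intersecting this equality with $\UnCell{\DeuxCat}$ and invoking the bijection of Theorem~\ref{IsoDeuxLocFondDeuxLocFondLax}, one concludes $\DeuxLocFond{W}' = \DeuxLocFondLax{W} \cap \UnCell{\DeuxCat}$, completing the proof.

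I do not anticipate any real obstacle: the argument is a purely formal recombination of already established results, exploiting the fact that the isomorphism $\DeuxLocFond{V} \leftrightarrow \DeuxLocFondLaxInduit{V}$ of Theorem~\ref{IsoDeuxLocFondDeuxLocFondLax} behaves well with respect to generators (Proposition~\ref{Tatata}) and that passage to $\TildeLax{(-)}$ does not change the localizer generated in $\DeuxCatLax$ (Lemma~\ref{Blablabla}).
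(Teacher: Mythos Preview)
Your argument is correct and follows essentially the same route as the paper: invoke Lemma~\ref{Blablabla} to replace $S$ by $\widetilde{S}$, then use the isomorphism of Theorem~\ref{IsoDeuxLocFondDeuxLocFondLax} to descend from $\DeuxCatLax$ to $\DeuxCat$. The only cosmetic difference is that you pass through Proposition~\ref{Tatata} to conclude $(\DeuxLocFond{W}')_{lax} = \DeuxLocFondLax{W}$, whereas the paper argues directly that any \ClasseDeuxLocFond{} $\DeuxLocFond{W}'$ containing $\widetilde{S}$ satisfies $\DeuxLocFondLax{W} \subset \DeuxLocFondLaxInduit{\DeuxLocFond{W}'}$ and hence $\DeuxLocFondLax{W} \cap \UnCell{\DeuxCat} \subset \DeuxLocFond{W}'$.
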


\begin{proof}
En vertu du lemme \ref{Blablabla}, $\DeuxLocFondLax{W}$ est engendré par $\widetilde{S}$. On a bien sûr $\widetilde{S} \subset \DeuxLocFondLax{W} \cap \UnCell{\DeuxCat}$. Soit $\DeuxLocFond{W}'$ un \ClasseDeuxLocFond{} contenant $\widetilde{S}$. Comme $\DeuxLocFond{W}' \subset \DeuxLocFondLaxInduit{\DeuxLocFond{W}'}$, l'hypothèse implique $\widetilde{S} \subset \DeuxLocFondLaxInduit{\DeuxLocFond{W}'}$, donc $\DeuxLocFond{W} \subset \DeuxLocFondLaxInduit{\DeuxLocFond{W}'}$, c'est-à-dire $(\DeuxLocFondLax{W} \cap \UnCell{\DeuxCat})_{lax} \subset \DeuxLocFondLaxInduit{\DeuxLocFond{W}'}$, donc $\DeuxLocFondLax{W} \cap \UnCell{\DeuxCat} \subset \DeuxLocFond{W}'$, ce qui permet de conclure. 
\end{proof}

\begin{rem}
On se gardera de croire que, si un localisateur fondamental $\DeuxLocFondLax{W}$ de $\DeuxCatLax$ est engendré par une classe de \DeuxFoncteursLax{} $S$, alors le localisateur fondamental $\DeuxLocFondLax{W} \cap \UnCell{\DeuxCat}$ de $\DeuxCat$ est engendré par ${S} \cap \UnCell{\DeuxCat}$. Pour un contre-exemple, on peut considérer $S = \UnCell{\DeuxCatLax} \backslash \UnCell{\DeuxCat}$, c'est-à-dire la classe des morphismes de $\DeuxCatLax$ qui ne sont pas dans $\DeuxCat$.
\end{rem}

\begin{prop}\label{Tititi}
Soit $\UnLocFond{W}$ un \ClasseUnLocFond{}. S'il est engendré par $S \subset \UnCell{\Cat}$, alors le localisateur fondamental $\NerfLax^{-1} (i_{\Delta}^{-1} (\UnLocFond{W}))$ de $\DeuxCatLax$ est également engendré par $S$. 
\end{prop}

\begin{proof}
On a bien sûr $S \subset \NerfLax^{-1} (i_{\Delta}^{-1} (\UnLocFond{W}))$. Soit de plus $\DeuxLocFondLax{W}$ un \ClasseDeuxLocFondLax{} contenant $S$. L'inclusion $\NerfLax^{-1} (i_{\Delta}^{-1} (\UnLocFond{W})) \subset \DeuxLocFondLax{W}$ équivaut à $\NerfLax^{-1} (i_{\Delta}^{-1} (\UnLocFond{W})) \cap \UnCell{\Cat} \subset \DeuxLocFondLax{W} \cap \UnCell{\Cat}$, c'est-à-dire à $\UnLocFond{W} \subset \DeuxLocFondLax{W} \cap \UnCell{\Cat}$, ce qui résulte du fait que $\DeuxLocFondLax{W} \cap \UnCell{\Cat}$ est un \ClasseUnLocFond{} contenant $S$ et de l'hypothèse faite sur $\UnLocFond{W}$. 
\end{proof}

\begin{lemme}\label{Blebleble}
Soit $\DeuxLocFondLax{W}$ un \ClasseDeuxLocFondLax{}. S'il est engendré par $S \subset \UnCell{\DeuxCatLax}$, alors il est également engendré par $i_{\Delta}(\NerfLax (S))$.
\end{lemme}

\begin{proof}
Pour tout $u : \mathdeuxcat{A} \to \mathdeuxcat{B}$ dans $S$, on a le diagramme commutatif
$$
\xymatrix{
\Delta / \NerfLax \mathdeuxcat{A}
\ar[rr]^{\Delta / \NerfLax (u)}
\ar[d]_{\SupLaxObjet{\mathdeuxcat{A}}}
&&
\Delta / \NerfLax \mathdeuxcat{B}
\ar[d]^{\SupLaxObjet{\mathdeuxcat{B}}}
\\
\mathdeuxcat{A}
\ar[rr]_{u}
&&
\mathdeuxcat{B}
}
$$
dont les flèches verticales sont dans $\DeuxLocFondLax{W}$. C'est donc également le cas de $\Delta / \NerfLax (u)$, ce qui montre $i_{\Delta}(\NerfLax (S)) \subset \DeuxLocFondLax{W}$. Étant donné un localisateur fondamental $\DeuxLocFondLax{W}'$ de $\DeuxCatLax$ contenant $i_{\Delta}(\NerfLax (S))$, la considération du même diagramme permet de conclure $S \subset \DeuxLocFondLax{W}'$, donc $\DeuxLocFondLax{W} \subset \DeuxLocFondLax{W}'$. 
\end{proof}

\begin{prop}\label{Tototo}
Soit $\DeuxLocFondLax{W}$ un \ClasseDeuxLocFondLax{}. S'il est engendré par $S \subset \UnCell{\DeuxCatLax}$, alors le localisateur fondamental $\DeuxLocFondLax{W} \cap \UnCell{\Cat}$ de $\Cat$ est engendré par $i_{\Delta}(\NerfLax (S))$. 
\end{prop}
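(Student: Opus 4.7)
The plan is to mimic the template of Propositions \ref{Tatata}, \ref{Tetete} and \ref{Tititi}, using Lemma \ref{Blebleble} as the key preparatory move. First I would invoke Lemma \ref{Blebleble} to replace $S$ by the equivalent generating class $i_{\Delta}(\NerfLax(S))$, which has the advantage of being contained in $\UnCell{\Cat}$. The inclusion $i_{\Delta}(\NerfLax(S)) \subset \DeuxLocFondLax{W} \cap \UnCell{\Cat}$ is then immediate: every element of $i_{\Delta}(\NerfLax(S))$ is a morphism of $\Cat$ by construction, and lies in $\DeuxLocFondLax{W}$ by the (proof of the) same lemma.

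Next I would take an arbitrary \ClasseUnLocFond{} $\UnLocFond{W}'$ containing $i_{\Delta}(\NerfLax(S))$ and aim to prove $\DeuxLocFondLax{W} \cap \UnCell{\Cat} \subset \UnLocFond{W}'$. The natural way to access $\DeuxLocFondLax{W}$ from a \ClasseUnLocFond{} is to lift $\UnLocFond{W}'$ to the class $\NerfLax^{-1}(i_{\Delta}^{-1}(\UnLocFond{W}'))$, which is a \ClasseDeuxLocFondLax{} by Lemma \ref{Leroux}. By the very definition of this lift, the inclusion $i_{\Delta}(\NerfLax(S)) \subset \UnLocFond{W}'$ is equivalent to $S \subset \NerfLax^{-1}(i_{\Delta}^{-1}(\UnLocFond{W}'))$.

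Since $\DeuxLocFondLax{W}$ is generated by $S$, this yields $\DeuxLocFondLax{W} \subset \NerfLax^{-1}(i_{\Delta}^{-1}(\UnLocFond{W}'))$, and intersecting with $\UnCell{\Cat}$ gives
$$
\DeuxLocFondLax{W} \cap \UnCell{\Cat} \subset \NerfLax^{-1}(i_{\Delta}^{-1}(\UnLocFond{W}')) \cap \UnCell{\Cat}.
$$
An application of Lemma \ref{Chamfort} identifies the right-hand side with $\UnLocFond{W}'$, which concludes the argument.

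There is no real obstacle here beyond assembling the correct chain: the crucial preliminary step is Lemma \ref{Blebleble}, which transports the generating set into $\UnCell{\Cat}$ and makes the subsequent comparison with an arbitrary \ClasseUnLocFond{} formally possible; after that, Lemmas \ref{Leroux} and \ref{Chamfort} combine without friction.
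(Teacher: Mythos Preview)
Your argument is correct and follows essentially the same route as the paper: invoke Lemma~\ref{Blebleble}, then for an arbitrary \ClasseUnLocFond{} $\UnLocFond{W}'$ containing $i_{\Delta}(\NerfLax(S))$ pass to the \ClasseDeuxLocFondLax{} $\NerfLax^{-1}(i_{\Delta}^{-1}(\UnLocFond{W}'))$ (Lemma~\ref{Leroux}) to obtain $\DeuxLocFondLax{W} \subset \NerfLax^{-1}(i_{\Delta}^{-1}(\UnLocFond{W}'))$. The only cosmetic difference is the last step: you intersect with $\UnCell{\Cat}$ and apply Lemma~\ref{Chamfort} directly, whereas the paper rewrites the inclusion via Lemma~\ref{DeuxLocFondLaxInterCat} and concludes by the injectivity part of Theorem~\ref{IsoUnLocFondDeuxLocFondLax}; your shortcut is slightly more economical.
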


\begin{proof}
En vertu du lemme \ref{Blebleble}, $\DeuxLocFondLax{W}$ est engendré par $i_{\Delta}(\NerfLax (S))$, donc en particulier $i_{\Delta}(\NerfLax (S)) \subset \DeuxLocFondLax{W} \cap \UnCell{\Cat}$. Soit $\UnLocFond{W}$ un \ClasseUnLocFond{} contenant $i_{\Delta}(\NerfLax (S))$. On a donc l'inclusion $S \subset \NerfLax^{-1} (i_{\Delta}^{-1} (\UnLocFond{W}))$, donc $\DeuxLocFondLax{W} \subset \NerfLax^{-1} (i_{\Delta}^{-1} (\UnLocFond{W}))$ puisque $\NerfLax^{-1} (i_{\Delta}^{-1} (\UnLocFond{W}))$ est un \ClasseDeuxLocFondLax{} et que $\DeuxLocFondLax{W}$ est le plus petit \ClasseDeuxLocFondLax{} contenant $S$. En vertu du lemme \ref{DeuxLocFondLaxInterCat}, cela se récrit $\NerfLax^{-1} (i_{\Delta}^{-1} (\DeuxLocFondLax{W} \cap \UnCell{\Cat})) \subset \NerfLax^{-1} (i_{\Delta}^{-1} (\UnLocFond{W}))$, d'où, en vertu du théorème \ref{IsoUnLocFondDeuxLocFondLax}, $\DeuxLocFondLax{W} \cap \UnCell{\Cat} \subset \UnLocFond{W}$. 
\end{proof}

\begin{prop}\label{prop:bij_acc_1}
Soit $\DeuxLocFond{W}$ un \ClasseDeuxLocFond{}. S'il est engendré par $S \subset \UnCell{\DeuxCat}$, alors le localisateur fondamental $\DeuxLocFond{W} \cap \UnCell{\Cat}$ de $\Cat$ est engendré par $i_{\Delta}(\NerfLaxNor (S))$. 
\end{prop}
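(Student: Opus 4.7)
Mon plan consiste \`a combiner la proposition \ref{Tatata}, la proposition \ref{Tototo} et le lemme \ref{Allain}, puis \`a r\'econcilier $\NerfLax$ et $\NerfLaxNor$ au moyen de la remarque \ref{Nini} et du th\'eor\`eme \ref{CisinskiGrothendieck} de minimalit\'e de $\UnLocFondMin$. L'essentiel du travail ayant d\'ej\`a \'et\'e effectu\'e, il s'agit principalement de cha\^iner les \'enonc\'es dans le bon ordre.

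Premi\`ere \'etape : j'appliquerais la proposition \ref{Tatata} pour conclure que le localisateur fondamental $\DeuxLocFondLaxInduit{W}$ de $\DeuxCatLax$ est engendr\'e par $S$, consid\'er\'e comme partie de $\UnCell{\DeuxCatLax}$ via l'inclusion $\DeuxCat \hookrightarrow \DeuxCatLax$. La proposition \ref{Tototo} appliqu\'ee \`a $\DeuxLocFondLaxInduit{W}$ entra\^inerait alors que le \ClasseUnLocFond{} $\DeuxLocFondLaxInduit{W} \cap \UnCell{\Cat}$ est engendr\'e par $i_{\Delta}(\NerfLax(S))$. En invoquant le lemme \ref{Allain}, qui identifie $\DeuxLocFondLaxInduit{W} \cap \UnCell{\Cat}$ \`a $\DeuxLocFond{W} \cap \UnCell{\Cat}$, on obtiendrait que ce dernier est engendr\'e par $i_{\Delta}(\NerfLax(S))$.

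Deuxi\`eme \'etape : il resterait \`a v\'erifier que les classes $i_{\Delta}(\NerfLax(S))$ et $i_{\Delta}(\NerfLaxNor(S))$ engendrent le m\^eme \ClasseUnLocFond{}. Pour chaque $u \in S$, qui est un \DeuxFoncteurStrict{}, la remarque \ref{Nini} fournit un carr\'e commutatif dans $\Cat$ reliant $\Delta/\NerfLaxNor{(u)}$ et $\Delta/\NerfLax{(u)}$ dont les fl\`eches verticales appartiennent \`a $\UnLocFondMin$. Par le th\'eor\`eme \ref{CisinskiGrothendieck}, $\UnLocFondMin$ est contenu dans tout \ClasseUnLocFond{} ; la propri\'et\'e de ``2 sur 3'' permet donc d'affirmer qu'un \ClasseUnLocFond{} $\UnLocFond{W}'$ contient $\Delta/\NerfLax{(u)}$ si et seulement s'il contient $\Delta/\NerfLaxNor{(u)}$. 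Par cons\'equent, l'inclusion $i_{\Delta}(\NerfLax(S)) \subset \UnLocFond{W}'$ \'equivaut \`a $i_{\Delta}(\NerfLaxNor(S)) \subset \UnLocFond{W}'$, et les deux classes engendrent donc bien le m\^eme \ClasseUnLocFond{}, ce qui conclut.

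L'obstacle principal est essentiellement d'ordre formel : toute la substance homotopique est contenue dans les r\'esultats ant\'erieurs (notamment la proposition \ref{SupLaxW} via la proposition \ref{Tototo}, le th\'eor\`eme \ref{IsoDeuxLocFondDeuxLocFondLax} via le lemme \ref{Allain} et le th\'eor\`eme \ref{CisinskiGrothendieck} de Cisinski). La seule subtilit\'e r\'eelle est de bien remarquer que l'on passe de $\NerfLax$ dans l'\'enonc\'e de la proposition \ref{Tototo} \`a $\NerfLaxNor$ dans celui vis\'e, ce qui se fait sans effort gr\^ace \`a la remarque \ref{Nini}.
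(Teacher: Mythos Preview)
Your proof is correct and follows essentially the same approach as the paper, which simply cites propositions~\ref{Tatata} and~\ref{Tototo} together with remark~\ref{Nini}. You are just more explicit than the paper about two points it leaves implicit: the identification $\DeuxLocFondLaxInduit{W} \cap \UnCell{\Cat} = \DeuxLocFond{W} \cap \UnCell{\Cat}$ via lemma~\ref{Allain}, and the detailed ``2 sur 3'' argument (using th\'eor\`eme~\ref{CisinskiGrothendieck}) showing that $i_{\Delta}(\NerfLax(S))$ and $i_{\Delta}(\NerfLaxNor(S))$ generate the same localisateur fondamental de $\Cat$.
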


\begin{proof}
C'est une conséquence des propositions \ref{Tatata} et \ref{Tototo} et de la remarque \ref{Nini}.
\end{proof}

\begin{prop}\label{prop:bij_acc_2}
Soit $\UnLocFond{W}$ un \ClasseUnLocFond{}. S'il est engendré par $S \subset \UnCell{\Cat}$, alors le localisateur fondamental $\NerfLaxNor^{-1} (i_{\Delta}^{-1}(\UnLocFond{W}))$ de $\DeuxCat$ est également engendré par $S$. 
\end{prop}

\begin{proof}
C'est une conséquence des propositions \ref{Tetete} et \ref{Tititi} et du lemme \ref{BarreTildeW}. 
\end{proof}

\section{Critère local}

\begin{df}
Soit $\UnLocFond{W}$ un \ClasseUnLocFond{}. Un morphisme $u : A \to B$ de $\Cat$ est \emph{$\UnLocFond{W}$-localement constant}, ou plus simplement \emph{localement constant}, si, pour tout morphisme $b \to b'$ de $B$, le morphisme $A/b \to A/b'$ de $\Cat$ est une $\UnLocFond{W}$\nobreakdash-équivalence faible. 
\end{df}

\begin{theo}[Cisinski]\label{CaractTheoBCat}
Le localisateur fondamental minimal $\UnLocFondMin$ de $\Cat$ est le seul localisateur fondamental de $\Cat$ vérifiant les propriétés suivantes : 
\begin{itemize}
\item[(i)] Pour tout morphisme $u : A \to B$ de $\Cat$, si $u$ est une équivalence faible, alors $\pi_{0}(u) : \pi_{0}A \to \pi_{0}B$ est une bijection.
\item[(ii)] Pour tout morphisme $u : A \to B$ de $\Cat$ localement constant, $u$ est une équivalence faible si et seulement s'il est asphérique. 
\end{itemize}
\end{theo}

\begin{proof}
C'est le théorème 2.3.6. de \cite{LFM}. 
\end{proof}

\begin{df}
Soit $\DeuxLocFond{W}$ un \ClasseDeuxLocFond{}. Un morphisme $u : \mathdeuxcat{A} \to \mathdeuxcat{B}$ de $\DeuxCat$ est \emph{$\DeuxLocFond{W}$-lax-localement constant} ou, plus simplement, \emph{lax-localement constant} si, pour tout morphisme $b \to b'$ de $B$, le morphisme $\TrancheLax{\mathdeuxcat{A}}{u}{b} \to \TrancheLax{\mathdeuxcat{A}}{u}{b'}$ de $\DeuxCat$ est une $\DeuxLocFond{W}$\nobreakdash-équivalence faible. 
\end{df}

\begin{paragr}
On rappelle qu'il existe une structure de catégorie de modèles sur $\EnsSimp$ dont les équivalences faibles sont les équivalences faibles simpliciales et dont les cofibrations sont les monomorphismes. Cela permet (même si ce n'est en principe pas indispensable) de donner sens à la notion de carré homotopiquement cartésien dans $\EnsSimp$.
\end{paragr}

\begin{theo}[Cegarra]\label{ThBCegarra}
Soit $u : \mathdeuxcat{A} \to \mathdeuxcat{B}$ un \DeuxFoncteurStrict{} lax-localement constant. Alors, pour tout objet $b$ de $\mathdeuxcat{B}$, le carré canonique
$$
\xymatrix{
\NerfLaxNor (\TrancheLax{\mathdeuxcat{A}}{u}{b})
\ar[r]
\ar[d]
&
\NerfLaxNor (\mathdeuxcat{A})
\ar[d]
\\
\NerfLaxNor (\TrancheLax{\mathdeuxcat{B}}{}{b})
\ar[r]
&
\NerfLaxNor (\mathdeuxcat{B}) 
}
$$
est homotopiquement cartésien. 
\end{theo}

\begin{proof}
C'est un énoncé dual de celui de \cite[théorème 3.2]{Cegarra}. 
\end{proof}

Pour toute petite \deux{}catégorie $\mathdeuxcat{A}$, on note $\pi_{0}\mathdeuxcat{A}$ le quotient de l'ensemble $\Objets{\mathdeuxcat{A}}$ par la relation d'équivalence engendrée par la relation élémentaire « $a \sim a'$ s'il existe une \un{}cellule de $a$ vers $a'$ dans $\mathdeuxcat{A}$ ». Cela permet de définir un foncteur $\pi_{0} : \DeuxCat \to \Ens$.

\begin{theo}\label{CaractTheoBDeuxCat}
Le localisateur fondamental minimal $\DeuxLocFondMin$ de $\DeuxCat$ est le seul localisateur fondamental de $\DeuxCat$ vérifiant les propriétés suivantes.
\begin{itemize}
\item[(i)] Pour tout morphisme $u : \mathdeuxcat{A} \to \mathdeuxcat{B}$ de $\DeuxCat$, si $u$ est une équivalence faible, alors $\pi_{0}(u) : \pi_{0}\mathdeuxcat{A} \to \pi_{0}\mathdeuxcat{B}$ est une bijection.
\item[(ii)] Pour tout morphisme $u : \mathdeuxcat{A} \to \mathdeuxcat{B}$ de $\DeuxCat$ lax-localement constant, $u$ est une équivalence faible si et seulement s'il est lax-asphérique. 
\end{itemize}
\end{theo}

\begin{proof}
Le localisateur fondamental $\DeuxLocFondMin$ de $\DeuxCat$ vérifie par définition la condition $(i)$ de l'énoncé du théorème \ref{CaractTheoBDeuxCat}. On sait déjà qu'un morphisme lax-asphérique de $\DeuxCat$ est une équivalence faible. Réciproquement, si un morphisme $u$ de $\DeuxCat$ est $\DeuxLocFondMin$\nobreakdash-lax-localement constant et que c'est une $\DeuxLocFondMin$\nobreakdash-équivalence faible, alors, en vertu du théorème \ref{ThBCegarra}, il est $\DeuxLocFondMin$\nobreakdash-lax-asphérique. Le localisateur fondamental $\DeuxLocFondMin$ de $\DeuxCat$ vérifie donc la condition $(ii)$. 

Soit $\DeuxLocFond{W}$ un localisateur fondamental de $\DeuxCat$. S'il vérifie les conditions $(i)$ et $(ii)$ de l'énoncé du théorème \ref{CaractTheoBDeuxCat}, le localisateur fondamental $\DeuxLocFond{W} \cap \UnCell{\Cat}$ de $\Cat$ vérifie les conditions $(i)$ et $(ii)$ de l'énoncé du théorème \ref{CaractTheoBCat}. En vertu de ce même théorème \ref{CaractTheoBCat}, $\DeuxLocFond{W} \cap \UnCell{\Cat}$ n'est autre que $\UnLocFondMin$. On en déduit $\DeuxLocFond{W} = \DeuxLocFondMin$ en vertu du théorème \ref{IsoUnLocFondDeuxLocFond}. 
\end{proof}

\pdfbookmark[0]{Références}{bibliography}

\bigskip

\textsc{\footnotesize
Jonathan Chiche}

\footnotesize{\emph{Adresse électronique }: \url{jonathan.chiche@polytechnique.org}}

\footnotesize{\emph{URL }: \url{http://webusers.imj-prg.fr/~jonathan.chiche/}}

\end{document}